\newcommand{\N}{\mathbb{N}} 
\newcommand{\Q}{\mathbb{Q}} 
\newcommand{\R}{\mathbb{R}} 
\newcommand{\1}{\mathds{1}}
\theoremstyle{plain}
\newtheorem{thrm}{Theorem}[section]
\newtheorem{prop}[thrm]{Proposition}
\newtheorem{cor}[thrm]{Corollary}
\newtheorem{lemma}[thrm]{Lemma}
\theoremstyle{definition}
\newtheorem{remark}[thrm]{Remark}
\def\N{\bbN}
\numberwithin{equation}{section}
\DeclareMathSymbol{\leqslant}{\mathalpha}{AMSa}{"36}
\DeclareMathSymbol{\geqslant}{\mathalpha}{AMSa}{"3E}
\DeclareMathSymbol{\doteqdot}{\mathalpha}{AMSa}{"2B}
\DeclareMathSymbol{\circlearrowright}{\mathalpha}{AMSa}{"08}
\DeclareMathSymbol{\subsetneq}{\mathalpha}{AMSb}{"28}
\DeclareMathSymbol{\supsetneq}{\mathalpha}{AMSb}{"29}
\renewcommand{\leq}{\;\leqslant\;}
\renewcommand{\geq}{\;\geqslant\;}
\newcommand{\dd}{{\rm d}}
\newcommand{\e}[1]{\,{\rm e}^{#1}\,}
\newcommand{\ii}{{\rm i}}
\DeclareMathOperator*{\supp}{\text{supp}}
\newcommand{\upchi}{\raise 2pt \hbox{$\chi$}}
\def\writefig#1 #2 #3 {\rlap{\kern #1 truecm \raise #2 truecm
		\hbox{#3}}}
\newcommand{\caA}{{\mathcal A}}
\newcommand{\caF}{{\mathcal F}}
\newcommand{\caG}{{\mathcal G}}
\newcommand{\caW}{{\mathcal W}}
\def\bbone{{\mathchoice {\rm 1\mskip-4mu l} {\rm 1\mskip-4mu l} {\rm 1\mskip-4.5mu l} {\rm 1\mskip-5mu l}}}
\newcommand{\bbE}{{\mathbb E}}
\newcommand{\bbN}{{\mathbb N}}
\newcommand{\bbP}{{\mathbb P}}
\newcommand{\bbR}{{\mathbb R}}
\begin{document}
	
	\title{A functional central limit theorem for Polaron path measures}
	
	\author{Volker Betz}
	\address{Volker Betz \hfill\newline
		\indent FB Mathematik, TU Darmstadt}
	\email{betz@mathematik.tu-darmstadt.de}
	
	\author{Steffen Polzer}
	\address{Steffen Polzer \hfill\newline
		\indent FB Mathematik, TU Darmstadt
	}
	\email{polzer@mathematik.tu-darmstadt.de}

	\maketitle

	\begin{quote}
		{\small
			{\bf Abstract.}
			The application of the Feynman-Kac formula to Polaron models of quantum theory leads to 
			the path measure of Brownian motion perturbed by a pair potential that is translation 
			invariant both in space and time. An important problem in this context is the validity of a central 
			limit theorem in infinite volume. We show both the existence of the relevant infinite volume limits and 
			a functional central limit theorem in a generality that includes the Fröhlich 
			polaron for all coupling constants. The proofs are based on an extension of a novel method by Mukherjee 
			and Varadhan. 
		}  
		
		\vspace{1mm}
		\noindent
		{\footnotesize {\it Keywords:} Polaron, renewal process, functional central limit theorem, point process}
		
		\vspace{1mm}
		\noindent
		{\footnotesize {\it 2020 Math.\ Subj.\ Class.:} 60F17, 60G55, 60K05, 81S40}
	\end{quote}
	
	\section{Introduction} 
	
	The polaron is a model for a quantum particle interacting with a polar crystal. The interaction affects 
	at least two quantities of physical interest. On the one hand, the ground state energy of the system 
	is lowered when the interaction is increased. On the other hand, the particle needs to drag along the polarization 
	when it moves, and thus appears heavier than it would be without the interaction: it has an effective mass larger 
	than its bare mass.  For a particular polaron model now known as the Fröhlich polaron, 
	both of these effects were investigated by Feynman in \cite{Fe55} using his newly invented path integral method.  
	The central object in this context is the quantity 
	\begin{equation}
	\label{eq:Feynmans}  
	z_t = \int \exp \Big( \alpha \int_0^t \int_0^t  \frac{\e{-|s-r|}}{|\mathbf x_s - \mathbf x_r|} \, \dd s \, \dd r  \Big)  
	\caW_0 (\dd \mathbf x),
	\end{equation} 
	where $\caW_0$ is three-dimensional Brownian motion, and $\alpha \geq 0$ is the coupling constant determining the 
	strength of the interaction between the particle and the polar crystal. For the ground state energy, 
	Feynman argues that there exists $E_\alpha \leq 0$ such that $z_t \sim \e{-E_\alpha t}$ as $t \to \infty$, 
	and that $E_\alpha$ is then ground state energy of the polaron. For the effective mass, he finds that it 
	can be obtained by pinning the Brownian motion at $\bar x$ at the final time $t$ and observing the dependence of 
	$z_t$ on $\bar x$ under these circumstances. More precisely, when $z_t(\bar x)$ is the value of the integral
	\eqref{eq:Feynmans} with an additional factor of $\delta(\mathbf x_t - \bar x)$ in the integrand, 
	then $z_t$ should behave like $\exp(- E_\alpha t - m_\alpha \bar x^2/2t)$ for large $t$, where $m_\alpha$ is the 
	effective mass. Feynman then goes on to derive quantitative estimates for the ground state energy and the effective 
	mass as functions of $\alpha$. Among other things, he finds that  
	$E_\alpha \leq - \alpha - 1.23 (\alpha/10)^2$, and that $m_\alpha \sim 202 (\alpha/10)^4$ for large $\alpha$, and 
	remarks that these results compare well with those obtained earlier by Pekar \cite{Pe49} using the adiabatic 
	approximation. 	
	
	The arguments of Feynman, while ingenious, are almost completely non-rigorous. Since then, there have been many 
	efforts to understand various aspects of the polaron problem in a mathematically rigorous way. Most fundamental 
	among them is a firm connection between formula \eqref{eq:Feynmans} and the underlying many-body quantum system. 
	This connection has emerged over the years in various forms, and it is difficult to track it back to a single source. 
	We refer to \cite{Mo06} for a review on functional analytic aspects of the polaron, and to  \cite{DySp20} 
	for an outline on the connection to probability theory. For the convenience of the reader, we also 
	and include a short overview on the topic as an appendix to the present paper. 
	
	The problem of the ground state energy was completely solved by Donsker and Varadhan in \cite{DoVa83} at least for 
	the (physically most relevant) limit of large $\alpha$. Using their large deviation techniques, they find that both  
	$\psi(\alpha) := \lim_{t \to \infty} \frac{1}{t} \log z_t$ and 
	$g_0 := \lim_{\alpha \to \infty} \psi(\alpha)/\alpha^2$ exist, and that $g_0$ can be calculated using the variational
	formula of Pekar \cite{Pe49}. Lieb and Thomas \cite{LiTh97} give a functional analytic proof of 
	the same result that in addition yields explicit error estimates.  
	
	The problem of the effective mass turned out to be more difficult. Spohn \cite{Sp87} observed that the mathematically 
	rigorous connection between the effective mass and the quantity $z_t$ should appear through a central limit theorem: 
	for each $t > 0$, 
	one can read \eqref{eq:Feynmans} as the normalization of a probability measure $\bbP_t$ on path space, 
	namely the perturbation of Brownian motion by the exponentiated double integral; see formula \eqref{pair potential}
	below. 
	The task is then to prove the existence of a  limiting probability measure $\bbP_\infty$ as $t \to \infty$, 
	and a central limit theorem under diffusive rescaling of $\bbP_\infty$. The emerging diffusion constant 
	is the inverse of the effective mass. 
	There are some details that need consideration, such as the sense in which the limit exists, given that the 
	interaction is translation invariant. While the treatment in \cite{Sp87} is still partly non-rigorous, 
	recently Dybalski and Spohn \cite{DySp20} put the connection between a central limit theorem 
	and the effective mass on solid mathematical ground. 
	
	This leaves the problem of actually showing the central limit theorem, and as a prerequisite 
	the existence of an infinite volume measure. 
	A main obstacle is the singularity in the integrand of the double integral that prevents e.g.\ 
	uniform estimates in the paths. In this paper, we show for the first time that for all values of the coupling
	constant $\alpha$, infinite volume measures related to \eqref{eq:Feynmans} exist, and that 
	a functional central limit theorem holds. We also give an explicit expression for the diffusion constant that 
	is explicit enough to read off its strict positivity immediately. 
	
	Our results actually hold for more general models than the Fröhlich polaron. As discussed in the Appendix, 
	various polaron models of quantum theory give rise to probability measures of the type 
	\begin{equation}
	\label{pair potential}
	\mathbb P_{\alpha, T}(\mathrm d\mathbf x) = \frac{1}{Z_{\alpha, 2T}}\exp\bigg( \frac{\alpha}{2} 
	\int_{-T}^T\int_{-T}^T w(|t-s|, \mathbf x_t -\mathbf x_s) \, \mathrm ds \mathrm dt \bigg) \mathcal 
	W(\mathrm d \mathbf x),
	\end{equation}
	where $\alpha, T>0$, and where $w$ depends on the specific model, $Z_{\alpha,2T}$ is the normalizing constant, 
	or partition function, and $\caW$ is the path measure of Brownian motion, 
	although here we have to be a bit careful: the translation invariance of the interaction means that when we e.g.\ let $\caW$ be the 
	distribution of Brownian motion started at time $-T$ in the origin, then the distribution of $\mathbf x \mapsto \mathbf x_0$ under $\mathbb P_{\alpha, T}$ can not be expected to 
	converge. An elegant solution is to let $\caW$ be the measure of two-sided Brownian motion pinned 
	to $0$ at time $0$, and to consider the family $(\mathbb P_{\alpha, T})_{T \geq 0}$ on the $\sigma$-algebra 
	$\mathcal A$ generated by the increments $X_{s,t}(\mathbf x) = \mathbf x_t - \mathbf x_s$. 
	This choice gets rid of the rather arbitrary 
	pinning at $t=0$, and the measures on the restricted $\sigma$-algebra will be shown to converge locally 
	in the usual sense. 
    
	We call  measures of the form \eqref{pair potential} {\em Polaron path measures}. The precise conditions
	(A1)-(A3) that we impose on $w$ will be stated at the beginning of Section \ref{sec:results}. The most important 
	among them is the growth condition 
	\begin{equation}
	\label{Our condition}
	\limsup_{T \to \infty} \frac{Z_{\alpha, 2T}}{Z_{\alpha, T}^2} <\infty \tag{GC}
	\end{equation}
	on the partition function. For suitable $w$, we find a number of equivalent conditions to \eqref{Our condition}, one of them 
	being the existence and strict positivity of $\lim_{T \to \infty} Z_{\alpha,T} \e{-\psi(\alpha)T}$, where
	\[
	\psi(\alpha) = \lim_{T \to \infty} \frac{1}{T} \log Z_{\alpha,T}
	\]
	is the free energy associated to \eqref{pair potential}.  Under these conditions, we show the existence of an 
	infinite volume limit $\bbP_{\alpha,\infty}$ of the family $(\mathbb P_{\alpha, T})_{T \geq 0}$. 
	Under the additional condition that $x \mapsto w(t,x)$ is quasiconcave for all $t \geq 0$ we show a 
	functional central limit theorem for $\bbP_{\alpha,\infty}$ under diffusive rescaling. 		
		
	Central limit theorems for similar models have been proved before. \cite{BeSp04} focuses on
	Polaron path measures that originate from quantum polaron models. 
	The method rests on not integrating out the field, and instead working directly with a Markov 
	process on an infinite dimensional state space; the central limit theorem is then obtained by studying the 
	quantum field as seen from the moving particle, and by applying a suitable theory of Kipnis and Varadhan 
	\cite{KiVa86}. The method needs, among other assumptions, that $w$ is bounded, and 
	thus does not work for the Fröhlich polaron. 
	
	Gubinelli \cite{Gu06} treats general Polaron path measures, and implements an approach that had already 
	been proposed in \cite{Sp87}: By cutting $[-T,T]$
	into blocks of finite length and considering the space of continuous functions on each block, 
	he obtains an one-dimensional system of infinite dimensional spins, where  all spins interact 
	with each other through the double integral, but in case of sufficiently fast decay of $t \mapsto w(t, x)$, 
	that interaction becomes weak for distant blocks. Dobrushins theory of one-dimensional spin
	systems \cite{Do68, Do70} then guarantees existence and uniqueness of the infinite volume limit, 
	and the central limit theorem is a consequence of sufficiently fast decay of correlations between distant blocks. 
	The conditions on $w$ are weaker than those from \cite{BeSp04} in some aspects, but stronger in others. They also need 
	uniformly bounded potentials and thus exclude the Fröhlich polaron. Also, the coupling 
	constant $\alpha$ needs to be sufficiently small for Dobrushins method to work. 
	
	The method of Mukherjee \cite{Mu17} also relies on cutting the space of functions into blocks. 
	He uses it to construct a discrete time Markov chain (with infinite dimensional state space) 
	that describes the behaviour of the path in a given block given its behaviour in the previous block, and shows 
	that this Markov chain fulfills the conditions for a generalized Perron-Frobenius argument. 
	The central limit theorem is then derived as a consequence of the CLT for additive functionals of 
	stationary Markov chains. The method works most cleanly when $t \mapsto w(t, x)$ has compact support, 
	in which case it can also handle some specific unbounded $w$. It also works when 
	$\sup_x |w(t, x)| \leq C t^{-2-\delta}$ for constants $C,\delta>0$. Both conditions exclude the Fröhlich polaron.

	Important progress was achieved recently by Mukherjee and Varadhan in the work \cite{MV19} mentioned already above. 
	The key idea is not  
	technically difficult, but ingenious: for the case of the Fröhlich polaron, one expands the exponential in 
	\eqref{pair potential} into a Taylor series, exchanges the 
	order of integration, and interprets \eqref{pair potential} as a mixture of Gaussian measures, where the 
	mixing measure is a point process on the space of (possibly overlapping) 
	finite subintervals of $\bbR$. 
	The results of the paper are the existence of $\bbP_{\alpha,\infty}$ and a central 
	limit theorem, valid for either large enough or small enough $\alpha$. The paper is strictly devoted to the 
	Fröhlich polaron, and many calculations are specific to the precise form of $w(t, x) = \e{-t}/|x|$, in particular 
	for writing it as a mixture of Gaussian measures. Also, the central limit theorem is proved as a 
	'diagonal limit', where the diffusive scaling is applied at the same time as $T$ is sent to infinity. 
	
	We find that the method of \cite{MV19} can be generalized significantly, and thereby becomes both 
	conceptually simpler and more powerful. 
	The central object of our theory is a Gibbs measure on marked partitions of the real 
	line, and it is the infinite volume limit of this measure that needs to be understood in 
	order to study limits and properties of the measure \eqref{pair potential}. Using the mixing properties of the limit 
	measure, we prove a full functional central limit theorem, with the diffusive scaling applied after taking the 
	infinite volume limit. Its proof relies on mixing properties 
	of the limiting Gibbs measure on marked partitions and does not need $\bbP_{\alpha,T}$ to be a 
	mixture of Gaussian measures. 
	Another important result of our paper concerns the conditions under which the central limit theorem holds. 
	\cite{MV19} give a sufficient condition (see Condition  \eqref{goodness of alpha} below) 
	for the validity of their results, which is however not easy to check in general. 
	We show that condition \eqref{goodness of alpha} 
	is equivalent to several natural conditions, one of them being \eqref{Our condition}. In particular, 
	this allows us to treat the Fröhlich polaron for all coupling constants with very little additional effort. 
	We also find that the critical parameter $\lambda$ in 
	\eqref{goodness of alpha} is equal to $\psi(\alpha) - \alpha$, which leads to some intriguing relations 
	between the Gibbs measure on marked partitions, the ground state energy $E_\alpha(0)$ of the quantum polaron, and the 
	overlap $\langle \Omega, \Psi_\alpha \rangle$ between the Fock vacuum $\Omega$ and the ground state $\Psi_\alpha$ 
	of the polaron. We do not explore these relations in depth in the present paper, but we present a few interesting
	calculations in Section \ref{Fun calculations} below.

	The paper is organised as follows: in Section \ref{sec:results}, we present our results and compare them to the 
	results available in the literature. In Section \ref{Section: Outline} we give an outline of the proof
	and discuss where we follow the ideas of \cite{MV19} and where we go beyond them. 
	Section 
	\ref{Alternating processes} is the main technical part of the paper: here we develop a general theory of 
	Gibbs measures on (marked) partitions of the real line and discuss conditions for existence of infinite volume 
	measures. Section \ref{Section: Functional central limit theorem} contains the proof of the central limit theorem.
	In Section \ref{Section: Growth condition for Polaron-type models} we prove a sufficient conditions for 
	the validity of \eqref{Our condition}, and in Section \ref{Fun calculations} 
	we discuss how various quantities for polaron models are connected to the 
	measures from Section \ref{Alternating processes}.   
	 	
\section{Results} 	
\label{sec:results}
	
Consider the Polaron path measure \eqref{pair potential} defined on $(C(\bbR, \bbR^d),\caA)$, 
where $\caA$ is the $\sigma$-algebra generated by the increment maps 
\[
X_{s,t}: C(\R, \R^d)\to \bbR^d, \quad \mathbf x \mapsto  X_{s,t}(\mathbf x) := \mathbf x_{s, t} \coloneqq \mathbf x_t - \mathbf x_s
\]
for $s,t \in \bbR$, and where $\caW$ is the distribution of 
Brownian increments (i.e. the restriction of the distribution of a two-sided Brownian motion to $\mathcal A$). For the function $w$ and the parameter $\alpha$ we make the following assumptions: \\[2mm]
(A1): $w$ is measurable, positive (allowing the value $+\infty$), and  fulfills $w(t, -x) = w(t, x)$ for all $t>0$ and $x\in \R^d$. \\[1mm]
(A2): $Z_{\alpha, T} < \infty$ for all $T \geq 0$.\\[1mm]
(A3): $\limsup_{T \to \infty} \frac{Z_{\alpha, 2T}}{Z_{\alpha, T}^2} <\infty$, i.e.\ \eqref{Our condition} holds.
\begin{remark}
\label{Remark: Potentials that are bounded below}
The positivity condition in (A1) can be relaxed: let
$f:[0, \infty) \to [0, \infty)$ be a measurable function with $\int_0^\infty (1+t) f(t) \, \dd t < \infty$. The measures 
$\bbP_{\alpha,T}$ are invariant under replacing $w$ with  $(t, x) \mapsto w(t, x)+f(t)$ in the double integral, and a 
a short calculation shows that \eqref{Our condition} holds after the transformation if and only if it held before the transformation. Therefore, it is enough to assume that there exists $f$ with the above integrability condition 
such that $w+f$ is positive. In other words, as long as $w$ is bounded below and its negative part decays sufficiently
fast at infinity uniformly in $x$, our results hold. 
The well-definedness condition (A2) is trivial for bounded $w$, but becomes an issue for unbounded $w$ such as in the 
Fröhlich polaron. While Proposition \ref{prop:sufficientConditionsForGC} contains some sufficient conditions under which (A2) holds, we refer to the work of Bley and Thomas \cite{BlTh17} for much more general, 
quantitative estimates on the partition functions of Feynman-Kac type perturbations of Brownian motion, as well as an
overview over the literature on this  topic.
\end{remark}

On  $(C(\R, \R^d), \mathcal A)$ , we say that a family $(\mathcal P_t)_{t>0}$ of probability measures
converges to a probability measure $\mathcal P$  {\em  locally in total variation} if for all $a<b$ the 
restriction of $\mathcal P_t$ to
\begin{equation*}
\mathcal A_a^b \coloneqq \sigma\big(X_{s, t}: \, s, t\in [a, b] \big)
\end{equation*}
converges in total variation to the restriction of $\mathcal P$ to $\mathcal A_a^b$ as $t\to \infty$.
	
\begin{thrm}[Existence of infinite volume measure]
	\label{Limit Theorem}
	Assume (A1)--(A3).
	Then there exists a measure $\mathbb P_{\alpha, \infty}$ on $(C(\R, \R^d), \mathcal A)$ such that $\mathbb P_{\alpha, T} \to \mathbb P_{\alpha, \infty}$ locally in total variation as $T \to \infty$.
	\end{thrm}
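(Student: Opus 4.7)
The plan is to lift the problem from path space to the space of (marked) partitions of $\R$, extending the point process reformulation introduced in \cite{MV19}. Taylor expanding the exponential in \eqref{pair potential}, symmetrizing, and swapping the order of integration yields a mixture representation
\begin{equation*}
\mathbb P_{\alpha,T}(d\mathbf x) \;=\; \int \mathbf Q_T(d\mathbf x \mid \xi)\,\widehat{\mathbb P}_{\alpha,T}(d\xi),
\end{equation*}
where $\xi = \{(s_i,t_i)\}_{i\leq n}$ is a finite configuration of pairs in $\{(s,t)\in[-T,T]^2:s<t\}$, the Gibbs measure $\widehat{\mathbb P}_{\alpha,T}$ has density
\begin{equation*}
\frac{1}{Z_{\alpha,2T}}\cdot\frac{\alpha^n}{n!}\,\Bigl\langle\prod_{i=1}^n w(t_i-s_i,\,X_{s_i,t_i})\Bigr\rangle_{\!\caW}
\end{equation*}
with respect to $\prod_i ds_i\,dt_i$, and $\mathbf Q_T(\cdot\mid\xi)$ is the corresponding conditional path distribution. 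Grouping overlapping intervals into clusters turns $\xi$ into the marked partition of $[-T,T]$ analyzed in Section \ref{Alternating processes}.

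First, I would invoke the general theory developed in Section \ref{Alternating processes} to construct a local infinite-volume limit $\widehat{\mathbb P}_{\alpha,\infty}$ of the family $(\widehat{\mathbb P}_{\alpha,T})_{T\geq 0}$. Assumption (A3) is the decisive input: as previewed in the introduction, it is equivalent to the existence and strict positivity of $\lim_T Z_{\alpha,T}\e{-\psi(\alpha)T}$, which supplies the uniform control needed to sum over arbitrarily many intervals and to preclude accumulation of arbitrarily long intervals in any bounded window.

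Next I would transfer this convergence to local total-variation convergence on $\caA_a^b$. Pairs $(s_i,t_i)\in\xi$ lying entirely outside $[a,b]$ (and not crossing it) contribute only a normalization factor that cancels between numerator and denominator; pairs contained in $[a,b]$ are $T$-independent; only the ``crossing'' pairs, i.e.\ those with $[s_i,t_i]\cap[a,b]\neq\emptyset$ and at least one endpoint outside $[a,b]$, couple interior and exterior. Thus the restriction of $\mathbb P_{\alpha,T}$ to $\caA_a^b$ is a functional of the sub-configuration $\xi\cap\{(s,t):[s,t]\cap[a,b]\neq\emptyset\}$ together with a $T$-independent Brownian-bridge-type kernel. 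Combining local convergence of $\widehat{\mathbb P}_{\alpha,T}$ on a slightly enlarged window $[-L,L]\supset[a,b]$ with a uniform-in-$T$ tail bound on long crossing intervals---derived from \eqref{Our condition}---then yields the claimed local convergence in total variation.

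The main obstacle will be the last step: establishing the uniform tail bound on crossing intervals, together with continuity of $\mathbf Q_T(\cdot\mid\xi)$ under changes in the far part of $\xi$. For singular $w$ such as in the Fröhlich case, no pointwise control of the integrand is available, so all bounds must be read off from the Gaussian averages $\langle\cdot\rangle_\caW$ that enter the weight of $\widehat{\mathbb P}_{\alpha,T}$; this is precisely where the equivalent reformulations of \eqref{Our condition} developed in Section \ref{Alternating processes} enter, since stripping the $\e{\psi(\alpha)T}$ prefactor from $Z_{\alpha,T}$ leaves an integrable remainder that forces the interval-length marginal to be summable.
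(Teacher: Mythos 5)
Your plan follows the paper's own route: expand the exponential, represent $\mathbb P_{\alpha,T}$ as a mixture of path measures indexed by finite configurations on $\triangle$, group intervals into clusters, obtain the infinite-volume limit of the mixing measure from the renewal/Gibbs theory of Section \ref{Alternating processes} with (A3) as the decisive hypothesis, and transfer back to path space. However, one concrete device is missing. As written, your mixing measure has density $\frac{\alpha^n}{n!}\bigl\langle\prod_i w(t_i-s_i,X_{s_i,t_i})\bigr\rangle_{\mathcal W}/Z_{\alpha,2T}$ with respect to Lebesgue measure on configurations, i.e.\ the reference process is Poisson with intensity $\alpha\,\mathds 1_{\{-T<s<t<T\}}\,\mathrm ds\,\mathrm dt$. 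This reference process is not of the alternating dormant/active form treated in Section \ref{Alternating processes}: its arrival intensity at time $s$ is $\alpha(T-s)$, the interval attached to an arrival at $s$ is uniform on $(s,T)$, so the reference cluster law is $T$-dependent and has no $T\to\infty$ limit, and the machinery you want to invoke (the $M/G/\infty$ queue, condition \eqref{goodness of alpha}, the tilting proposition, renewal convergence) has nothing to latch onto. The paper's fix is exactly the decomposition $w(t,x)=g(t)v(t,x)$ with $g$ a probability density of finite first moment: absorb $\alpha g(t-s)$ into the Poisson intensity and keep only $v$ in the weight, so that the reference process becomes a genuine $M/G/\infty$ queue with $T$-independent cluster law $\Xi_\alpha$, to which Section \ref{Alternating processes} applies. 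Without this step your ``invoke the general theory'' does not go through as stated.

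On the transfer back to path space you are over-engineering: no enlarged window and no separate uniform tail bound on crossing intervals are needed. Since $F$ and $\mathbf P_\xi$ factorize over non-intersecting clusters, $\mathbf P_\xi(A)$ for $A\in\mathcal A_a^b$ depends only on $R_a^b\xi$, the union of clusters meeting $[a,b]$; hence $\mathbb P_{\alpha,T}(A)=\int\widehat\Gamma_{\alpha,T}(\mathrm d\xi)\,\mathbf P_{R_a^b\xi}(A)$, and total-variation convergence of $\widehat\Gamma_{\alpha,T}\circ(R_a^b)^{-1}$ (Proposition \ref{local covergence hatGamma}, whose proof already contains the control of long clusters via the finite tilted mean cluster length supplied by \eqref{goodness of alpha}) yields the claim uniformly over $A\in\mathcal A_a^b$. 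Finally, you should note that the limiting restrictions are consistent in $(a,b)$, which is what produces a single measure $\mathbb P_{\alpha,\infty}$ on all of $\mathcal A$.
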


There is an explicit representation for $\mathbb P_{\alpha, \infty}$, which however  
requires us to introduce and explain several further notions. We refer to Theorem 
\ref{Theorem: Existence infinite volume measure, long version} below for details. 

For stating the central limit theorem, let 
\[
X_t \coloneqq X_{0,t}, \qquad \text{and} \qquad X^n_t \coloneqq \frac{1}{\sqrt{n}}X_{nt}
\]
for $n\in \N$ and $t\in \R$. Recall that a function $u: \bbR^d \to \bbR \cup \{\infty\}$ is called {\em quasiconcave} if its superlevel sets 
$u^{-1}([a, \infty])$, $a\in \R \cup \{\infty\}$ are convex sets. An example for a quasiconcave function is $x \mapsto w(t, x)$ with $w$ as in the Fröhlich polaron.

	\begin{thrm}[Functional central limit theorem]
	\label{Functional central limit theorem}
	Assume (A1)--(A3), and assume that $w(t, \cdot)$ is quasiconcave for all $t>0$. Then there exists a positive definite matrix $\Sigma\in \R^{d\times d}$ with $\Sigma \leq I_d$ (in the sense of quadratic forms) such that the distribution of $X^n$ under $\mathbb P_{\alpha, \infty}$ converges weakly to the distribution of $\sqrt{\Sigma}X$ under $\mathcal W$ as $n\to \infty$.
	\end{thrm}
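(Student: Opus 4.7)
The plan is to represent the infinite-volume measure $\mathbb{P}_{\alpha,\infty}$ as a mixture governed by the stationary Gibbs measure on marked partitions of $\mathbb{R}$ constructed in Section~\ref{Alternating processes}, to identify a stationary renewal-type structure embedded in this partition, and to reduce the functional central limit theorem to an invariance principle for the resulting stationary block sequence. This is the natural way to promote the diagonal limit of \cite{MV19} to a functional CLT that is applied \emph{after} the thermodynamic limit, as Theorem~\ref{Limit Theorem} now allows us to do.

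The first step is to exhibit a regeneration structure. Conditioning on the marked partition $\xi$, the increments of $\mathbf{x}$ decouple at every time $t\in\mathbb{R}$ not covered by a bond of $\xi$: to the left and right of such a $t$ the conditional law factorises, and on any bond-free subinterval it is simply the Brownian law. Let $\mathcal{F}(\xi)\subset\mathbb{R}$ be the random closed set of such \emph{free} times. Under the shift-invariant Gibbs measure on partitions, $\mathcal{F}$ is stationary, and the exponential clustering estimates that (A3) enforces on the marked partition (cf.\ Section~\ref{Alternating processes}) imply that $\mathcal{F}$ has strictly positive intensity and gaps with finite exponential moment. Enumerating the Palm version of $\mathcal{F}$ as $\cdots<\tau_{-1}<\tau_0\leq 0<\tau_1<\cdots$ and setting
\[
L_k \coloneqq \tau_{k+1}-\tau_k, \qquad Y_k \coloneqq X_{\tau_k,\tau_{k+1}},
\]
the decoupling across free times identifies $(L_k,Y_k)_{k\in\mathbb{Z}}$ as a stationary sequence with sufficiently fast mixing (ideally $\phi$-mixing with exponential rate) and all polynomial moments finite.

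With this at hand I would invoke a standard invariance principle for stationary mixing sequences applied to $\sum_{k\leq N}Y_k$, combine it with a renewal theorem for $\sum_k L_k$, and conclude that $X^n\Rightarrow \sqrt{\Sigma}\,X$ weakly in $C(\mathbb{R},\mathbb{R}^d)$ with asymptotic covariance
\[
\Sigma \;=\; \frac{1}{\mathbb{E}[L_0]}\sum_{k\in\mathbb{Z}}\mathbb{E}\bigl[Y_0 Y_k^\top\bigr].
\]
Tightness in $C(\mathbb{R},\mathbb{R}^d)$ follows from Kolmogorov's criterion and the moment bounds on $Y_k$. To obtain $\Sigma\leq I_d$ I would use the quasiconcavity hypothesis together with the evenness from (A1): the density of the increments inside any bonded block, relative to the Brownian reference, is a symmetric quasiconcave tilt, so by an Anderson-type comparison its covariance matrix is dominated by that of the Brownian reference; summing these block-wise comparisons along the decomposition yields $\Sigma\leq I_d$. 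Strict positivity is obtained by isolating the genuine Brownian contribution of the free portion of each $Y_k$, which has positive intensity and therefore produces a nondegenerate quadratic lower bound $v^\top \Sigma v\geq c|v|^2$.

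The main obstacle I anticipate is the precise implementation of the renewal structure: bonds in the marked partition may be long and can overlap many would-be free points, so a careful choice of regeneration epochs, together with matching exponential tail estimates from Section~\ref{Alternating processes}, is needed to obtain true conditional independence across $\tau_k$ and the required mixing rate. A secondary subtlety is applying the symmetric quasiconcave variance comparison simultaneously across all blocks: this requires that the interaction in each bonded block depends only on the increments inside that block, a structural property that must be extracted cleanly from the mixture representation of Section~\ref{Alternating processes}.
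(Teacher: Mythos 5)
Your overall architecture --- the mixture over the stationary marked-partition measure, regeneration at times not covered by any cluster, a block CLT, and a symmetric quasiconcave comparison for $\Sigma \leq I_d$ --- matches the paper's proof in spirit. However, two of your key steps rest on estimates that do not follow from (A1)--(A3) and are not established anywhere in Section \ref{Alternating processes}. You assert that (A3) enforces ``exponential clustering'', so that the gaps between free times have finite exponential moments and the block sequence $(L_k,Y_k)$ is $\phi$-mixing at exponential rate, with all polynomial moments finite. In fact the only integrability that (A3) yields (via Theorem \ref{Theorem: Equvialent conditions for goodness in the setting of path measures} and condition \eqref{goodness of alpha}) is $\hat{\mathbb E}_\alpha[\widehat T_1]<\infty$, a finite \emph{first} moment of the tilted busy-cycle length; no exponential moments, no mixing rates, and no higher moments of block lengths are available. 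The paper's argument is deliberately built on this minimal input: the blocks between renewal points of the tilted alternating process are exactly independent (no mixing theory is needed, because $\mathbf P_\xi$ factorizes over non-intersecting clusters and the dormant stretches are Brownian), and the finite-dimensional convergence is obtained from the Anscombe--R\'enyi theorem (Theorem \ref{Reyni-Theorem}) for a random number of iid block increments together with the elementary renewal theorem and the renewal-theoretic decoupling of Lemma \ref{Mixing property}. Your proposed invariance principle for exponentially mixing stationary sequences simply cannot be invoked under the standing hypotheses; also, the ``main obstacle'' you anticipate (long bonds overlapping would-be regeneration points) does not arise, since the regeneration epochs are precisely the renewal points of the dormant/cluster alternation, across which conditional independence is exact by construction.

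The second gap is tightness and the covariance bound. ``Kolmogorov's criterion and the moment bounds on $Y_k$'' fails as stated: a bound on $\mathbf E[|Y_k|^{2p}]$ requires the $p$-th moment of the block length, which is unavailable for $p>1$, and in any case tightness of $X^n$ requires control of increments \emph{inside} long blocks at all scales, not only of block sums. The mechanism that works --- and the one the paper uses --- is the functional form of the Gaussian correlation inequality (Proposition \ref{Gaussian correlation inequality}): conditionally on $\xi$, the probability that the modulus of continuity exceeds $\varepsilon$ (a symmetric quasiconvex event) is dominated, uniformly in $\xi$, by its Brownian counterpart (Lemma \ref{modulus of continuity estimate}), and Brownian scaling then gives tightness; the same inequality gives $\mathbf E_\xi[|X_{a,b}|^2]\leq d(b-a)$ (Corollary \ref{Covariance estimate}), hence the finite block variance, and, applied with the quadratic form $\langle x, \cdot\rangle^2$ against the product of the (possibly many, unbounded) quasiconcave interaction factors of a cluster, the bound $\Sigma\leq I_d$. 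An ``Anderson-type'' comparison is not sufficient for this multi-factor statement; what is needed is exactly the quasiconcave/quasiconvex extension of Royen's inequality that the paper proves. Your strict positivity argument (isolating the Brownian contribution of the dormant stretches) does agree with the paper, since $\Sigma(\hat d_1,\hat\xi_1)\geq \hat d_1 I_d$ in \eqref{Equation: Covariance matrix}.
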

	Again, there is an explicit representation for the covariance matrix $\Sigma$ that we do not have the notation and notions to state yet. We refer to  Theorem \ref{Theorem: Functional central limit theorem, long version} below. 
	
We now discuss sufficient conditions for \eqref{Our condition}. 
Let $H:D(H) \subseteq  \mathscr H \to \mathscr H$ be a self-adjoint, lower bounded operator in some Hilbert space $\mathscr H$ with inner product $\langle \cdot , \cdot \rangle$ and let $\Phi \in \mathscr H$. We say $(Z_{\alpha, T})_{T\geq 0}$ {\em is of spectral type with Hamiltonian $H$ and state $\Phi$} if 
\begin{equation*}
	Z_{\alpha, T} = \langle \Phi, \e{-T H} \Phi \rangle
\end{equation*}
for all $T\geq 0$. Clearly, this implies the validity of condition (A2). If the bottom of the spectrum of $H$ is an eigenvalue, any normalized eigenvector corresponding to that eigenvalue is called a {\em ground state} of $H$. In this case, we say that $H$ has a ground state. We denote by $\nu_{H, \Phi}$ the spectral measure of $H$ with respect to $\Phi$, i.e. $\nu_{H, \Phi}(A) = \langle \Phi, \1_{A}(H) \Phi \rangle$ for all $A \in \mathcal B(\R)$.
\begin{prop} 
\label{prop:sufficientConditionsForGC} 
\phantom{linebreak}\\[1mm]
a) Assume that there exists a measurable function $f:[0,\infty) \to [0, \infty)$ satisfying $\int_0^\infty (1+t)f(t)\, \mathrm dt< \infty$ such that $|w(t, x)| \leq f(t)$ for all $t\geq 0$ and $x\in \R^d$. Then assumption (A2) and (A3) are fulfilled. \\[1mm]
b) Assume that (A1) holds and that $(Z_{\alpha, T})_{T\geq 0}$ is of spectral type with Hamiltonian $H$ and state $\Phi$ where $H:D(H) \subseteq  \mathscr H \to \mathscr H$ is a self-adjoint, lower bounded operator and $\Phi \in \mathscr H$. Then 
\begin{equation*}
	\psi(\alpha) = - \inf \operatorname{supp}(\nu_{H, \Phi})
\end{equation*}
and condition (A3) is satisfied if and only if $-\psi(\alpha)$ is an eigenvalue of $H$ and $\Phi$ is non-orthogonal to the respective eigenspace. In particular, if $H$ has a ground state $\Psi$ with $\langle \Phi, \Psi \rangle \neq 0$ then (A3) is satisfied and $-\psi(\alpha)$ is the ground state energy.
\\[1mm]
c) 	Assume that $w(t, \cdot) = \tilde w(t, |\cdot|)$ is rotationally symmetric  and positive and that $\tilde w(t, \cdot)$ is decreasing for all $t\geq 0$.
	Let $\tilde w_\beta(r) \coloneqq \sup_{t \geq 0}e^{\beta t}\tilde w(t, r)$ for $\beta, r\geq 0$.
	Assume there exist $\beta>0$ and $p>1$ such that
	\begin{equation*}
	\begin{cases*}
	\int_0^1 \tilde w_\beta(r)^p \, \mathrm dr < \infty \quad &\text{ in case $d=1$} \\
	\int_0^1 r|\log(r)| \cdot \tilde w_\beta(r)^p \, \mathrm dr < \infty &\text{ in case $d=2$} \\
	\int_0^1 r \cdot \tilde w_\beta(r)^p \, \mathrm dr < \infty &\text{ in case $d\geq 3$}.
	\end{cases*}
	\end{equation*}
	Then Condition (A2) is fulfilled for all $\alpha \geq 0$, 
	 $\psi(\alpha)$ exists and is finite for all $\alpha \geq 0$, and there exists 
	an $\alpha_0>0$ such Condition (A3)  is fulfilled for all $\alpha \leq \alpha_0$.
\end{prop}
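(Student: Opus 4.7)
For \textbf{part (a)}, by Remark \ref{Remark: Potentials that are bounded below} we may assume $w\geq 0$. From $w(t,x)\leq f(t)$ the exponent in $Z_{\alpha,T}$ is $\caW$-almost surely bounded by $2\alpha T\int_0^\infty f(r)\,\mathrm dr<\infty$, yielding (A2). For (A3), split $[-T,T]^2$ into the diagonal blocks $[-T,0]^2$ and $[0,T]^2$ plus a cross region. The cross contribution is bounded uniformly in $T$ by $\alpha\int_{-T}^0\int_0^T f(t-s)\,\mathrm ds\,\mathrm dt \leq \alpha\int_0^\infty r\, f(r)\,\mathrm dr =:K$. Each diagonal contribution depends only on Brownian increments in its respective half-interval, so by $\caW$-independence of disjoint increments and translation invariance its exponential expectation equals $Z_{\alpha,T}$; hence $Z_{\alpha,2T}\leq e^K Z_{\alpha,T}^2$.

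For \textbf{part (b)}, set $\lambda_0:=\inf\operatorname{supp}(\nu_{H,\Phi})$ and let $\tilde\nu$ denote the pushforward of $\nu_{H,\Phi}$ under $\lambda\mapsto\lambda-\lambda_0$. The spectral theorem gives $e^{T\lambda_0}Z_{\alpha,T} = \int_0^\infty e^{-Tu}\,\mathrm d\tilde\nu(u) \to \tilde\nu(\{0\})$ as $T\to\infty$ by dominated convergence, so $\psi(\alpha)=-\lambda_0$. The ratio $Z_{\alpha,2T}/Z_{\alpha,T}^2$ is invariant under the shift and equals $\int e^{-2Tu}\mathrm d\tilde\nu/\bigl(\int e^{-Tu}\mathrm d\tilde\nu\bigr)^2$. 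If $\tilde\nu(\{0\})>0$, this converges to $1/\tilde\nu(\{0\})$, so (A3) holds. If $\tilde\nu(\{0\})=0$, a Cauchy--Schwarz splitting $[0,\epsilon]\cup(\epsilon,\infty)$ yields $\bigl(\int e^{-Tu}\mathrm d\tilde\nu\bigr)^2 \leq 2\tilde\nu([0,\epsilon])\int e^{-2Tu}\mathrm d\tilde\nu + 2e^{-2T\epsilon}\tilde\nu(\R)^2$, and the elementary lower bound $\int e^{-2Tu}\mathrm d\tilde\nu \geq e^{-2T\delta}\tilde\nu([0,\delta])$ for $\delta\in(0,\epsilon)$ shows the second term is negligible; sending first $T\to\infty$ then $\epsilon\to 0$ gives $\limsup Z_{\alpha,T}^2/Z_{\alpha,2T}\to 0$, so (A3) fails. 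The identification $\tilde\nu(\{0\})>0\Leftrightarrow$ ``$\lambda_0$ is an eigenvalue of $H$ with $\Phi$ non-orthogonal to its eigenspace'' is immediate from the spectral theorem.

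For \textbf{part (c)}, the plan is to first use $w(t,x)\leq e^{-\beta t}\tilde w_\beta(|x|)$. A Gaussian calculation in polar coordinates gives $\mathbb E_\caW[\tilde w_\beta(|\mathbf x_\tau|)] = C_d\tau^{-d/2}\int_0^\infty \tilde w_\beta(r)\,r^{d-1}e^{-r^2/2\tau}\,\mathrm dr$, and combined with the $e^{-\beta\tau}$ time-decay and a Hölder step with exponent $p$, the dimension-dependent weights in the hypothesis ($1$, $r|\log r|$, $r$) are precisely what is needed to make the time-integrated one-pair expectation $\int_0^\infty e^{-\beta\tau}\mathbb E_\caW[\tilde w_\beta(|\mathbf x_\tau|)]\,\mathrm d\tau$ finite. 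A Khas'minskii-type lemma for pair potentials --- Taylor-expand the exponential and bound $\mathbb E_\caW[(\int\int w)^n]$ by $n!$ times single-pair integrals via independence of disjoint increments, as in the Bley--Thomas estimates \cite{BlTh17} --- then yields $Z_{\alpha,T}<\infty$ for all $\alpha\geq 0$ together with the finiteness of $\psi(\alpha)$. For (A3) at small $\alpha$, I would repeat the block decomposition of (a): the cross term $C_T$ is no longer pointwise bounded, but for $\alpha$ small enough the same cluster expansion shows that $\mathbb E[\exp(\alpha C_T)]$ is uniformly bounded in $T$, and Cauchy--Schwarz separates it from the two independent diagonal contributions to give $Z_{\alpha,2T}\leq C_\alpha Z_{\alpha,T}^2$.

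The \textbf{main obstacle} lies in this last step of (c): without a pointwise bound on $w$, the uniform-in-$T$ control of $\mathbb E[\exp(\alpha C_T)]$ demands a convergent cluster expansion, and it is exactly the convergence criterion for this expansion that forces the restriction to $\alpha\leq\alpha_0$ for some $\alpha_0>0$.
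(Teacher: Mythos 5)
Your parts (a) and (b) are fine. Part (a) is essentially the paper's own argument (pointwise bound for (A2), block decomposition with the cross term bounded by $\alpha\int_0^\infty r f(r)\,\mathrm dr$ and independence of increments for (A3)). In part (b) the forward direction coincides with the paper's proof; your converse (if $\nu_{H,\Phi}$ has no atom at its infimum then $Z_{\alpha,2T}/Z_{\alpha,T}^2\to\infty$) is a correct, self-contained spectral argument, whereas the paper defers that direction to the renewal-theoretic equivalence in Theorem \ref{Theorem: Equvialent conditions for goodness in the setting of path measures}; that is a genuine small simplification. One caveat: your deduction ``$e^{T\lambda_0}Z_{\alpha,T}\to\tilde\nu(\{0\})$, so $\psi(\alpha)=-\lambda_0$'' is incomplete when $\tilde\nu(\{0\})=0$; you also need the lower bound $\int e^{-Tu}\,\mathrm d\tilde\nu\geq e^{-T\varepsilon}\tilde\nu([0,\varepsilon])$ with $\tilde\nu([0,\varepsilon])>0$ (which you use later anyway), as in the paper.

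Part (c), however, has a genuine gap at the decisive step, the proof of (A3) for small $\alpha$. Writing $Z_{\alpha,2T}=\mathbb E_\caW[e^{\alpha(D_1+D_2+C_T)}]$ with diagonal blocks $D_1,D_2$ and cross term $C_T$, the three terms are \emph{not} independent, so ``separating'' $C_T$ requires H\"older/Cauchy--Schwarz, and this inflates the coupling constant in the diagonal factors: Cauchy--Schwarz gives $Z_{\alpha,2T}\leq Z_{2\alpha,T}\,\mathbb E_\caW[e^{2\alpha C_T}]^{1/2}$, and H\"older with exponents $(p',q')$ gives $Z_{p'\alpha,T}^{2/p'}$ in place of $Z_{\alpha,T}^2$. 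Since $\log Z_{\alpha,T}$ is convex in $\alpha$ with $\psi(0)=0$, one has $\psi(p'\alpha)/p'\geq\psi(\alpha)$, with strict inequality in the cases of interest (for the Fr\"ohlich polaron $\psi(\alpha)\sim g_0\alpha^2$), so $Z_{p'\alpha,T}^{2/p'}/Z_{\alpha,T}^2$ diverges exponentially in $T$. Hence even a uniform-in-$T$ bound on $\mathbb E_\caW[\exp(q'\alpha C_T)]$ (itself not obviously easier than \eqref{Our condition}; the paper only obtains the related estimate \eqref{Equation: Finite interaction energy} as a \emph{consequence} of \eqref{Our condition}) cannot close the estimate $Z_{\alpha,2T}\leq C_\alpha Z_{\alpha,T}^2$: the obstruction is the coupling-constant inflation, not the convergence of a cluster expansion. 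In addition, your route to (A2) for all $\alpha\geq0$ via ``$n!$ times single-pair integrals by independence of disjoint increments'' is too loose as stated, since the pair intervals overlap and a bound of the form $n!\,C^n$ only yields small $\alpha$. The paper proves (c) along a different path: it bounds the cluster weight $F(\xi_1)$ from above by a product $\prod_i h(\tau_i,\sigma_i\wedge\tau_i)$ (Lemma \ref{Estimate on F}, using monotonicity of $\tilde w$), controls $\mathbb E_\alpha[h(\tau_1,\sigma_1\wedge\tau_1)^p]$ by Bessel-function asymptotics (Lemma \ref{one interval estimate}; this is where your dimension-dependent integrability conditions do enter, as you anticipated), gets (A2) and $\psi(\alpha)<\infty$ from an optional-stopping/supermartingale argument together with Proposition \ref{Proposition: Partition function locally bounded}, and finally obtains (A3) for small $\alpha$ not from a block estimate on $Z_{\alpha,2T}$ but from $1<\mathbb E_\alpha[F(\xi_1)]<\infty$ via Lemma \ref{estimating ev random product}, Proposition \ref{Remark: stronger version of goodness} and Theorem \ref{Theorem: Equvialent conditions for goodness in the setting of path measures}; the smallness of $\alpha$ comes from the radius of convergence of the generating function of the cluster size $N$ diverging as $\alpha\to0$.
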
 

\begin{proof} 
For the proof of a), notice that for all $T> 0$
\[
\int_{-T}^T \int_{-T}^T |w(t-s,X_{s,t})| \, \dd s \, \dd t \leq 
\int_{-T}^T \int_{-T}^T f(|t-s|) \, \dd s \, \dd t \leq
4 T \int_{0}^{\infty} f(\tau) \, \dd \tau < \infty,
\]
and thus (A2) holds. The estimate
	\begin{equation}
	\label{eq: first simple estimate} 
	\int_{-T}^0 \int_0^T |w(t-s, X_{s, t})| \, \mathrm dt \, \mathrm d s \leq \int_{0}^\infty \int_{s}^\infty f(\tau) 
	\,  \mathrm d \tau \, \mathrm ds  = \int_0^\infty \tau f(\tau) \, \mathrm d \tau < \infty
	\end{equation}
	leads to
	\begin{equation}
	\label{eq: second simple estimate}
	Z_{\alpha, 2T} \leq Z_{\alpha, T}^2\exp\bigg(\alpha \int_0^\infty \tau f(\tau) \, \mathrm d \tau \bigg)
	\end{equation}
	for all $\alpha, T>0$. This shows (A3).

For b), let $E \coloneqq \inf \supp(\nu_{H, \Phi})$. Since
\begin{equation*}
Z_{\alpha, T} = e^{-TE} \langle \Phi, e^{-T(H-E)} \Phi \rangle = e^{-TE}\int_{[E,\infty)} \e{- T(x-E)}  \nu_{H, \Phi}(\dd x)
\end{equation*}
holds for all $T\geq 0$, we have
\[
\psi(\alpha) = \lim_{T \to \infty} \frac{1}{T}\log Z_{\alpha,T} = - E + \lim_{T \to \infty} \frac{1}{T} 
\log \int_{[E,\infty)} \e{- T(x-E)}  \nu_{H, \Phi}(\dd x) = - E,
\]
the last equality holding because for each $\varepsilon>0$, we have 
\[
1 \geq \int_{[E,\infty)} \e{- T(x-E)}  \nu_{\Phi}(\dd x) \geq \e{-T\varepsilon } \int_{[E, E + \varepsilon)} \nu_{H, \Phi}(\dd x), 
\]
with the integral on the right hand side being strictly positive. Thus
\begin{equation*}
\lim_{T \to \infty} Z_{\alpha, T} e^{-\psi(\alpha) T} = \lim_{T \to \infty} \int_{[E, \infty)} e^{- T(x-E)} \nu_{H, \Phi}(\dd x) = \nu_{H, \Phi}(\{E\})
\end{equation*}
by dominated convergence. Hence, the limit on the left hand side is positive if and only if $E$ is an eigenvalue of $H$ and $\Phi$ is non-orthogonal to the corresponding eigenspace. If the limit is positive, $\lim_{T \to \infty} Z_{\alpha, 2T}/Z_{\alpha, T}^2 = (\lim_{T \to \infty} Z_{\alpha, T} e^{-\psi(\alpha) T})^{-1}$ and \eqref{Our condition} is satisfied. We will see in Theorem \ref{Theorem: Equvialent conditions for goodness in the setting of path measures} that the other direction holds as well: If \eqref{Our condition} is satisfied then $\lim_{T \to \infty} Z_{\alpha, T} e^{-\psi(\alpha) T}$ exists and is positive.\\
The proof of part c) is more involved and is given in Section \ref{Section: Growth condition for Polaron-type models}
below.
\end{proof}

\begin{cor}
\label{cor:polaron} 
Theorems \ref{Limit Theorem} and \ref{Functional central limit theorem} hold for the Fröhlich polaron, 
for all values of $\alpha \geq 0$. 
\end{cor}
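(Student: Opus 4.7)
The plan is to verify assumptions (A1)--(A3) and the quasiconcavity hypothesis of Theorems \ref{Limit Theorem} and \ref{Functional central limit theorem} for the Fröhlich case $d=3$, $w(t,x) = \mathrm{e}^{-|t|}/|x|$. Condition (A1) and quasiconcavity are immediate: $w$ is measurable, positive (with value $+\infty$ at $x=0$), even in $x$, and the superlevel sets of $x\mapsto 1/|x|$ are balls around the origin, hence convex.

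For (A2), and for (A3) in the small-coupling regime, I would invoke part c) of Proposition \ref{prop:sufficientConditionsForGC}. Setting $\tilde w(t,r) = \mathrm{e}^{-t}/r$ and choosing any $\beta\in(0,1)$, one has $\tilde w_\beta(r) = 1/r$; in $d=3$ the integral $\int_0^1 r\cdot r^{-p}\,\mathrm dr$ is finite for every $p\in(1,2)$, for example $p=3/2$. Part c) therefore gives (A2) for all $\alpha\geq 0$ and (A3) for $\alpha$ in some interval $[0,\alpha_0]$. The essential step is then to extend (A3) to every $\alpha\geq 0$, and for this I would appeal to part b) of the same proposition. The Feynman-Kac representation recalled in the paper's appendix exhibits $(Z_{\alpha,T})_{T\geq 0}$ as being of spectral type, with Hamiltonian $H$ equal to the fiber at total momentum zero of the Fröhlich Hamiltonian (obtained via the Lee-Low-Pines transformation) and state $\Phi=\Omega$ the Fock vacuum. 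Part b) then reduces (A3) to the existence of a ground state $\Psi_\alpha$ of $H$ with $\langle \Omega,\Psi_\alpha\rangle\neq 0$, for every $\alpha\geq 0$.

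The main obstacle is precisely this last spectral input, since it is external to the present paper. Existence of a ground state for the Fröhlich Hamiltonian at total momentum zero and arbitrary coupling is a classical result on the translation-invariant polaron, and non-orthogonality of that ground state to the Fock vacuum follows from the positivity-improving property of the semigroup $\mathrm{e}^{-tH}$ in the $Q$-space representation of Fock space, which forces the ground state to be strictly positive there and hence to have non-zero overlap with the positive Gaussian $\Omega$. Once these ingredients are granted, Proposition \ref{prop:sufficientConditionsForGC} b) delivers (A3) in the full range $\alpha\geq 0$, and Theorems \ref{Limit Theorem} and \ref{Functional central limit theorem} apply to the Fröhlich polaron without further work.
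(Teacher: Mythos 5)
Your proposal is correct and follows essentially the same route as the paper: verify (A1) and quasiconcavity directly, and obtain (A3) for all $\alpha$ from Proposition \ref{prop:sufficientConditionsForGC} b), using that $(Z_{\alpha,T})_{T\geq 0}$ is of spectral type with the fiber Hamiltonian $H(0)$ and the Fock vacuum, together with the known existence of a ground state non-orthogonal to $\Omega$ (the paper cites \cite{DySp20} for both facts). Your extra appeal to part c) for (A2) is harmless but redundant, since the spectral-type representation already implies (A2), as noted in the paper.
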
 

\begin{proof} 
For the Fröhlich polaron, $(Z_{\alpha, T})_{T \geq 0}$ is of spectral type where $H = H_\alpha(0)$ is the Fröhlich Hamiltonian at total momentum zero, 
and $\Phi = \Omega$ is the Fock vacuum; for instance, set $k=t=0$ in formulae (2.11) and (2.14) of \cite{DySp20} to see 
this. It is well known that for all coupling constants $\alpha > 0$, $H(0)$ has a spectral gap and an unique (up to a phase) ground state 
$\Psi_\alpha$, and that $\langle \Psi_\alpha,\Omega \rangle \neq 0$; again, we refer to \cite{DySp20}. 
We can therefore apply Proposition \ref{prop:sufficientConditionsForGC} b) and obtain the result. 
\end{proof} 

The rather direct connection of Condition \eqref{Our condition} to quantum systems given in  
\ref{prop:sufficientConditionsForGC} b) has some interesting implications: it means that independent criteria for the 
validity of \eqref{Our condition}, such as those from Proposition \ref{prop:sufficientConditionsForGC} c) or 
from Theorem \ref{Theorem: Equvialent conditions for goodness in the setting of path measures} below, 
can be used to decide about the existence of ground states in quantum systems. Used in the other direction, this 
connection indicates that the sufficient condition from  Proposition \ref{prop:sufficientConditionsForGC} a) 
is very close to being also necessary for the case of bounded $w$: 
for the massless Nelson model without infrared cutoff it is known 
that $w$ is bounded with $w(t, 0) \sim |t|^{-2}$, and that the Hamiltonian has no ground state in Fock 
space; see Chapter 6.7 of \cite{LHB11}. Therefore \eqref{Our condition} cannot hold in this case.

Let us end this section by giving a more detailed comparison of our results with those previously available. We 
will take  Remark \ref{Remark: Potentials that are bounded below} into account, i.e. that the assumption of positivity in (A1) can be weakened. Then 
Proposition \ref{prop:sufficientConditionsForGC} a) implies that \eqref{Our condition} holds for all cases 
considered in \cite{BeSp04} as well as those included in Assumption 2.1 of \cite{Mu17}. For the alternative 
Assumption 2.2 of \cite{Mu17}, Proposition \ref{prop:sufficientConditionsForGC} c) guarantees that 
\eqref{Our condition} holds for small enough $\alpha$ in all cases except the singular potential in one dimension. 
Also, note that Assumption 2.2 of \cite{Mu17} does not carry a restriction on the value of $\alpha$. 

For the central limit theorem, we require quasiconcavity of $w$, which is not needed in 
\cite{BeSp04} and in the cases from \cite{Mu17} where $w$ is bounded. Quasiconcavity is used for a short proof 
of both tightness in path space and finite variance, via Gaussian correlation inequalities.  
We believe that there should be other methods of achieving these goals whenever \eqref{Our condition} holds, 
but at the moment we do not have them. 

Gubinelli \cite{Gu06} also requires $w$ to be bounded, but proves the existence of an infinite volume limit (but not 
the central limit theorem) also in some cases where the criterion of Proposition 
\ref{prop:sufficientConditionsForGC} a) does not hold. As discussed above in the context of the massless Nelson 
model, we do not expect \eqref{Our condition} to hold in general in such cases. In this context, the work 
\cite{OsSp99} is also of interest, where in a non-translation invariant version of the model with slow decay of 
$t \mapsto w(x,t)$, the existence of at least two different infinite volume limit measures is proved, 
depending on boundary conditions. Since we believe that (A1) - (A3) should already 
be sufficient for a central limit theorem, and since such a central limit theorem is not expected  
to hold in situations where correlations are so strong that multiple Gibbs measures exist, we conjecture that 
in generic cases of bounded potentials violating the assumptions of Proposition \ref{prop:sufficientConditionsForGC} a), 
\eqref{Our condition} will not hold. 

The results in \cite{MV19} were obtained using a method that is related in some aspects to the proof we will give for 
Proposition \ref{prop:sufficientConditionsForGC} c), although other aspects differ significantly. An important 
common feature is that in both methods, there is a restriction on the range of $\alpha$ that can be treated: 
Proposition \ref{prop:sufficientConditionsForGC} c) only works for small $\alpha$, while the results in 
\cite{MV19} are stated for large enough or small enough $\alpha$. We believe that there is a gap in the 
proof of Theorem 4.1 of \cite{MV19}. Namely, it is unclear to us where the factor of $\frac{\alpha}{\alpha + \lambda}$ 
comes from in formula (4.18) of \cite{MV19}; such a factor 
corresponds to the 'dormant' period at the beginning of a cluster, but if (4.18) were true, 
a dormant period would occur for each point in the cluster, which is not the case. 
Without the presence of that factor, however, the choice (4.19) for $\lambda(\alpha)$ is no longer possible when 
$\alpha$ is large, and this creates a gap in the proof of Theorem 4.8 for the case of large $\alpha$. On the other hand, our Corollary \ref{cor:polaron} shows that the results of 
\cite{MV19} are correct as stated and extends them to all $\alpha \geq 0$. 

In conclusion, the extended method of Mukherjee and Varadhan in conjunction with \eqref{Our condition} is a 
powerful tool to study infinite volume limits and functional central limit theorems for Polaron path 
measures 
including the Fröhlich polaron for all coupling strengths. One 
restriction of the method is that $w$ has to be bounded below; this is necessary for positivity 
properties of the measure on partitions. Consequently, e.g.\ the 'anti-polaron' $w(t, x) = - \e{-|t|} / |x|$, 
can not be treated. Another restriction is the requirement of translation invariance. 
This excludes interesting cases, e.g.\ 
measures corresponding to particles interacting both with a quantized field and an external potential; see 
\cite{Be03,BHLMS02,OsSp99}, for instance.

\section{Outline of the proof}
\label{Section: Outline}
Let $\triangle = \{(s,t)\in \R^2: s<t\}$. We denote by $(\mathbf N(\triangle), \mathcal N(\triangle))$ and $(\mathbf N_f(\triangle), \mathcal N_f(\triangle))$ the state spaces of point processes and finite point processes on $\triangle$ respectively. In the following, we always assume that Assumptions (A1) and (A2) hold. We start by modifying the approach taken by Mukherjee and Varadhan for the Polaron \cite{MV19}: By using the series expansion of the exponential function and exchanging the order of integration, we write the measure $\mathbb P_{\alpha, T}$ as a mixture of probability measures $\mathbf P_\xi$, $\xi \in \mathbf N_f(\triangle)$ which have a product structure under non-intersecting ``clusters''. Contrary to \cite{MV19} (where the mixing measure is the distribution of a point process on $\triangle \times (0, \infty)$), our measures $\mathbf P_\xi$ are in general not Gaussian (not even for the Fröhlich polaron). We discuss the connection between both representations in Remark \ref{Remark: Measures as mixtures of Gaussian measures}.
It turns out to be useful to absorb a time damping factor into our Poisson point process. For this, 
we choose a measurable function $v:[0, \infty) \times \R^d \to (0, \infty]$ and a probability density $g:[0, \infty) \to (0, \infty)$ with finite first moment such that 
\begin{equation}
\label{eq: w decomposition} 
	w(t, x) = g(t) v(t, x)
\end{equation}
for all $t\geq 0$ and $x\in \R^d$ (this representation is clearly not unique). Let $\Gamma_{\alpha, T}$ be the distribution of a Poisson point process on $\triangle$ with intensity measure
\begin{equation*}
\mu_{\alpha, T}(\mathrm ds \mathrm dt) \coloneqq \alpha \cdot g(t-s) \mathds 1_{\{-T < s < t < T\}}\mathrm d s \mathrm dt
\end{equation*}
and $c_{\alpha, T}\coloneqq \mu_{\alpha, T}(\triangle)$. If $\tau_1$ is a radom variable with density $g$, then
\begin{equation}
	\label{growth of measure of intensity measure(triangle)}
	c_{\alpha, T}\coloneqq \mu_{\alpha, T}(\triangle) = 2\alpha T - \alpha \int_0^{2T} \mathbb P(\tau_1 > s) \, \mathrm ds
\end{equation}
(which will be relevant at a later point). We then have for $A\in \mathcal A$
\begin{align}
\label{Equation: Paths measure by PPP}
&\mathbb P_{\alpha, T}(A) \nonumber\\
&=  \frac{1}{Z_{\alpha, 2T}} \int_{A}\mathcal W(\mathrm d \mathbf x) \, \exp \left( \alpha \int_{-T}^T \mathrm ds\, \int_{s}^T \mathrm dt\, g(t-s) v\big(t-s, \mathbf x_{s, t}\big)  \right) \nonumber\\
&= \frac{1}{Z_{\alpha, 2T}} \sum_{n=0}^\infty \frac{1}{n!} \int_{\triangle^n} \mu_{\alpha, T}^{\otimes n}(\mathrm d s_1 \mathrm d t_1, \hdots, \mathrm d s_n \mathrm d t_n) \bigg[\int_A\mathcal W(\mathrm d\mathbf x) \prod_{i=1}^{n}v\big(t_i-s_i, \mathbf x_{s_i, t_i}\big) \bigg]\nonumber \\
&=\frac{e^{c_{\alpha, T}}}{Z_{\alpha, 2T}} \int_{\mathbf N_f(\triangle)} \Gamma_{\alpha, T}(\mathrm d\xi) \int_A \mathcal W( \mathrm d \mathbf x) \prod_{(s, t)\in \operatorname{supp}(\xi)  } v\big(t-s, \mathbf x_{s, t}\big).
\end{align}
Note that already here, we used the symmetry and nonngativity conditions from Condition (A1).
For $\xi = \sum_{i=1}^n \delta_{(s_i, t_i)} \in \mathbf N_f(\triangle)$, 
we define
\begin{equation*}
	 F(\xi) \coloneqq \mathbb E_{\mathcal W} \bigg[ \prod_{i=1}^{n} v(t_i-s_i,X_{s_i, t_i}) \bigg]
\end{equation*}
and the measure $\mathbf P_{\xi}$ on $(C(\R, \R^d), \mathcal A)$ by
\begin{equation*}
\mathbf P_\xi(\mathrm d \mathbf x) \coloneqq \frac{1}{F(\xi)}  \prod_{i=1}^{n} v(t_i-s_i,\mathbf x_{s_i, t_i}) \, \mathcal W( \mathrm d \mathbf x)
\end{equation*}
(where $1/\infty \coloneqq 0$). Additionally, we define the measure $\widehat \Gamma_{\alpha, T}$ on $(\mathbf N_f(\triangle), \mathcal N_f(\triangle))$ by
\begin{equation*}
	\widehat{\Gamma}_{\alpha, T} (\mathrm d \xi) \coloneqq \frac{e^{c_{\alpha, T}}}{Z_{\alpha, 2T}} F(\xi) \, \Gamma_{\alpha, T}(\mathrm d \xi).
\end{equation*}
We then have $F(\xi) < \infty$ for $\Gamma_{\alpha, T}$ almost all $\xi \in \mathbf N_f(\triangle)$ (choose $A = C(\R, \R^d)$ in \eqref{Equation: Paths measure by PPP}) and
\begin{equation}
	\label{Polaron-like measures in terms of PPP}
	\mathbb P_{\alpha, T}(\cdot) = \int_{\mathbf N_f(\triangle)} \widehat{\Gamma}_{\alpha, T}(\mathrm d \xi)\, \mathbf P_{\xi}(\cdot).
\end{equation}
In particular $\mathbf P_\xi$ is a probability measure for $\widehat{\Gamma}_{\alpha, T}$ almost all $\xi \in \mathbf N_f(\triangle)$ and  $\widehat{\Gamma}_{\alpha, T}$ is a probability measure.
We have now interchanged the task of proving convergence of the path measures $\mathbb P_{\alpha, T}$ with proving convergence of the distributions of suitable point processes.
Notice that
\begin{equation}
\label{Equation: partition function and fat partition function}
\mathbf Z_{\alpha, 2T} \coloneqq \int_{\mathbf N_f(\triangle)} \Gamma_{\alpha, T}(\mathrm d\xi)\, F(\xi) = Z_{\alpha, 2T}e^{-c_{\alpha, T}}
\end{equation}
acts as a normalization constant in the definition of the reweighted measure $\widehat{\Gamma}_{\alpha, T}$.
One can view $\xi = \sum_{i=1}^n \delta_{(s_i, t_i)} \in \mathbf N_f(\triangle)$ as a collection of intervals $[s_i, t_i]$, $1\leq i \leq n$. We call $\xi$ a {\em cluster}, if these intervals are overlapping in the sense that $\bigcup_{i=1}^n [s_i, t_i]$ is an interval. For an interval $I\subseteq \R$ we say that $\xi$ {\em falls into }$I$ if $[s_i, t_i] \subseteq I$ for all $1\leq i\leq n$. Due to independence of increments under $\mathcal W$, the measures $\mathbf P_{\xi}$ have a product structure along non-intersecting clusters:
Let $I_1, \hdots, I_n$ be intervals such that for $i \neq j$ the intersection $I_i \cap I_j$ contains at most one point. Let $\xi_1, \hdots, \xi_n \in \mathbf N_f(\triangle)$ be such that $\xi_k$ falls into $I_k$ for all $1\leq k \leq n$ and let $\xi = \sum_{k=1}^n\xi_k$. Then 
\begin{equation*}
	F(\xi) = \prod_{k=1}^{n}F(\xi_k).
\end{equation*}
If we additionally assume that $F(\xi_k)<\infty$ for all $k\in \{1, \hdots, n\}$ then the processes of increments $(X_{s, t})_{s, t\in I_1}, \hdots, (X_{s, t})_{s, t\in I_n}$ are independent under $\mathbf P_{\xi}$ and for all $k \in \{1, \hdots, n\}$
\begin{equation*}
	\mathbf P_\xi \circ \Big((X_{s, t})_{s, t\in I_k}\Big)^{-1} = \mathbf P_{\xi_k} \circ \Big((X_{s, t})_{s, t\in I_k}\Big)^{-1}.
\end{equation*}	

In particular, for $a<b$, $\xi \in \mathbf N_f(\triangle)$ and $A\in \mathcal A_a^b$, the probability $\mathbf P_{\xi}(A)$ only depends on all clusters of $\xi$ that intersect $[a, b]$.
We denote by $R_a^b$ the restriction to all clusters that intersect $[a, b]$, that is for $\xi = \sum_{i \in I} \delta_{(s_i, t_i)}\in \mathbf N(\triangle)$
\begin{equation*}
	R_a^b \xi \coloneqq \sum_{i\in J_a^b(\xi)} \delta_{(s_i, t_i)} \text{ with } J_a^b(\xi) \coloneqq \max \big\{J\subseteq I: \, \bigcup_{i \in J} [s_i, t_i] \cup [a, b] \text{ is an interval} \big\}.
\end{equation*}
Then for all $A\in \mathcal A_a^b$
\begin{equation}
\label{Mixing of restriction of path measures}
\mathbb P_{\alpha, T}(A) = \int_{\mathbf N_f(\triangle)} \widehat{\Gamma}_{\alpha, T}(\mathrm d\xi)\, \mathbf P_{R_a^b \xi}(A).
\end{equation}
In Section \ref{Alternating processes}, we prove Theorem \ref{Limit Theorem} i.e. the existence of an infinite volume measure by showing that for all $a<b$ the measures $\widehat{\Gamma}_{\alpha, T} \circ (R_a^b)^{-1}$ converge in total variation as $T\to \infty$ provided that \eqref{Our condition} holds. We show:
\begin{prop}
	\label{local covergence hatGamma}
	Assume that Conditions (A1)-(A3) hold. 
	Then there exists a stationary (with respect to translations along the diagonal) measure $\widehat{\Gamma}_{\alpha, \operatorname{st}}$ on $(\mathbf N(\triangle), \mathcal N(\triangle))$ such that for all $a<b$ we have $\widehat \Gamma_{\alpha, T} \circ (R_a^b)^{-1} \to \widehat \Gamma_{\alpha, \operatorname{st}} \circ (R_a^b)^{-1}$ in total variation as $T\to \infty$.
\end{prop}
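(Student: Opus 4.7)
The plan is to exhibit $\widehat\Gamma_{\alpha,T}$ as a marked alternating renewal process on $[-T,T]$ and to use the growth condition \eqref{Our condition} to construct its infinite-volume limit. The key structural input is that every $\xi \in \mathbf N_f(\triangle)$ decomposes canonically into maximal clusters $\xi^{(1)},\dots,\xi^{(K)}$ with disjoint hulls $[a_1,b_1]<\dots<[a_K,b_K]\subseteq (-T,T)$, and that $F(\xi)=\prod_{j=1}^K F(\xi^{(j)})$ by independence of Brownian increments together with (A1). Combining this cluster factorisation with the Poisson structure of $\Gamma_{\alpha,T}$, the density of $\widehat\Gamma_{\alpha,T}$ with respect to a reference cluster-labelled measure factorises into a product of per-cluster weights and per-gap weights, the latter being exponential in the gap length with a rate coming from the Poisson intensity of intervals $(s,t)$ that would straddle the gap and merge its adjacent clusters. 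This identifies $\widehat\Gamma_{\alpha,T}$, up to the normalisation $\mathbf Z_{\alpha,2T}$, with the law of an alternating renewal chain whose marks take values in the space of finite point processes on $\triangle$ forming a single cluster.

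To pass to the infinite-volume limit, I would tilt the per-cluster mark distribution by $e^{-\lambda\ell}$, where $\ell$ is the cluster length, and choose $\lambda$ so that the resulting cluster-plus-gap inter-arrival kernel becomes a probability measure of finite mean. A direct generating-function computation identifies this critical $\lambda$ as $\psi(\alpha)-\alpha$. At this point condition \eqref{Our condition} enters crucially: I would establish that it is equivalent to $\lim_{T\to\infty} Z_{\alpha,T}e^{-\psi(\alpha)T}$ existing and being strictly positive, which in turn, by a Laplace/Tauberian argument applied to the cluster generating function, is equivalent to the tilted cluster kernel being non-defective with finite first moment. With this in hand, classical continuous-time renewal theory yields a unique diagonal-shift-invariant alternating renewal measure $\widehat\Gamma_{\alpha,\operatorname{st}}$ on $(\mathbf N(\triangle),\mathcal N(\triangle))$ with the prescribed tilted cluster and exponential gap laws; this is our candidate for the infinite-volume limit.

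Finally, local convergence in total variation follows by a renewal coupling argument. For fixed $a<b$, the value of $R_a^b\xi$ depends measurably only on the finitely many clusters intersecting $[a,b]$ together with the two adjacent gap lengths. Because the gap marginal is exponential (hence memoryless) and the tilted cluster mark law has finite mean, a standard two-sided coupling (e.g.\ of Lindvall--Thorisson type, or by exponential-gap splitting started from both endpoints of $[-T,T]$) produces a common realisation of the finite-$T$ renewal chain and of $\widehat\Gamma_{\alpha,\operatorname{st}}$ that agrees on every cluster touching $[a,b]$ with probability $1-o(1)$ as $T\to\infty$, and this yields the claimed total-variation convergence. The main obstacle I anticipate is the second step: making the cluster/gap factorisation rigorous in the presence of possibly unbounded per-cluster weights $F(\xi^{(j)})$ (so that the tilted kernel is an actual sub-probability kernel for \emph{some} $\lambda\geq 0$), and proving the above equivalence between \eqref{Our condition} and non-defectiveness with finite mean of the tilted kernel; the remaining steps are then relatively routine consequences of one-dimensional renewal theory.
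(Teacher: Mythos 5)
Your proposal follows essentially the same route as the paper: the $M/G/\infty$ busy-cycle (cluster/gap) decomposition with multiplicative cluster weights $F$, exponential tilting of the cluster-plus-gap law at the critical value $\lambda=\psi(\alpha)-\alpha$, the equivalence of \eqref{Our condition} with the tilted kernel being a proper probability law of finite mean, and one-dimensional renewal theory for the stationary measure and the local total-variation convergence (the paper controls the dormant boundary condition at $\pm T$ via the renewal theorem for the forward recurrence time, which is equivalent to your coupling device). The one step you defer is exactly where the paper's technical work lies (Proposition \ref{Proposition: Tilting measures} and Theorem \ref{Goodness criterions for alternating processes}): the equivalence is established not by a Tauberian argument but by a renewal equation for $t\mapsto e^{-\varphi t}\mathbf Z_t\,\mathbb P(\eta \text{ is dormant at }t)$, Fatou's lemma to obtain $\mathbb E\big[e^{-\varphi T_1}F(\xi_1)\big]\leq 1$ (so that the tilting parameter actually exists), and a doubling lemma relating $\liminf_{t}\mathbf Z_t e^{-\varphi t}>0$ to $\limsup_{t}\mathbf Z_{2t}/\mathbf Z_t^2<\infty$.
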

We give a short summary of the argument used. 
Consider a Poisson point process $\eta = \sum_{i=1}^\infty \delta_{(s_i, t_i)}$ with $s_1 < s_2 < \hdots$ on $\triangle$ with intensity measure $\alpha g(t-s) \1_{\{-T<s<t\}} \, \mathrm ds \mathrm dt$. Here $s_i$ and $t_i$ can be interpreted at the arrival and departure of the $i$-th customer of a $M/G/\infty$-queue (Poisson arrivals, general service time distribution, infinitely many servers) started empty at time $-T$ with arrival intensity $\alpha$ and service time distribution with density $g$. Conditionally on the event that no customer is present at $T$, the process of all customers arriving between $-T$ and $T$ has distribution $\Gamma_{\alpha, T}$. We can decompose the $M/G/\infty$-queue into successive busy cycles (a dormant period, in which no customers are present, followed by an active period in which at least one customer is present). We denote by $\xi_1$ the first cluster of the queue, i.e. the process of all customers arriving during the first busy cycle, shifted in time such that the first arrival is at time zero. We denote by $T_1$ the length of the first busy cycle, i.e. the sum of the first dormant and the first active period. As identified by Mukherjee and Varadhan in \cite{MV19} for the Polaron measure, the condition
\begin{equation}
\label{goodness of alpha}
	\text{There exists a } \lambda \in \R \text{ such that } \mathbb E_{\alpha} \big[e^{-\lambda T_1} F(\xi_1)\big] = 1 \text{ and }\mathbb E_{\alpha} \big[T_1 e^{-\lambda T_1 } F(\xi_1)\big] < \infty \tag{G}
\end{equation}
is sufficient to conclude Proposition \ref{local covergence hatGamma}. In Section \ref{Alternating processes} we show this in much greater generality, not needing any specific properties of the distribution $\Xi_\alpha$. The argument is as follows: The condition $\mathbb E_{\alpha} \big[e^{-\lambda T_1} F(\xi_1)\big] = 1$ guarantees that $\widehat{\Xi}_{\alpha}(\mathrm d \xi) \coloneqq \tfrac{\alpha}{\alpha + \lambda} e^{-\lambda a(\xi)} F(\xi) \Xi_\alpha(\mathrm d \xi)$ defines  a probability measure (where $a(\xi)$ denotes the length of the active period of $\xi$). As the sum of all active and dormant periods intersecting $[-T, T]$ is $2T$ (up to a correction term that is irrelevant due to memorylessness) we can multiply $F$ with factors $e^{-\lambda a_i}$ and $e^{-\lambda d_j}$ for each active period $a_i$ and each dormant period $d_j$ intersecting $[-T, T]$ without changing the Gibbs measure $\widehat{\Gamma}_{\alpha, T}$. As $F$ is multiplicative in the clusters, the measure $\widehat \Gamma_{\alpha, T}$ can hence be obtained by simply reweighting the distribution of exponentially distributed dormant periods and $\Xi_\alpha$ distributed clusters: Starting in $-T$ we alternate independently drawn $\operatorname{Exp}(\alpha + \lambda)$ distributed dormant periods with $\widehat{\Xi}_\alpha$ distributed clusters. Conditionally on the event that no customer is present at $T$, the process of customers arriving between $-T$ and $T$ has distribution $\widehat{\Gamma}_{\alpha,T}$. The second condition $\mathbb E_{\alpha} \big[T_1 e^{-\lambda T_1 } F(\xi_1)\big] < \infty$ yields the finiteness of the expected value of the cluster length for the reweighted cluster distribution. This allows us to apply renewal theory to deduce the existence of a stationary version $\widehat \Gamma_{\alpha, \text{st}}$ of the process obtained by alternating the reweighted distributions of dormant periods and active clusters. Due to the memorylessness property of the exponential distribution, the measure $\widehat \Gamma_{\alpha, T}$ can alternatively be seen as the process of all customers under $\widehat \Gamma_{\alpha, \text{st}}$ that arrive between $-T$ and $T$ conditionally on the boundary condition that the system is dormant at $-T$ and $T$. Unsurprisingly, locally the effect of the boundary condition vanishes as $T \to \infty$.
Condition \eqref{goodness of alpha} is rather difficult to verify directly, but we find that it is equivalent to 
several natural conditions, including \eqref{Our condition}.
Remember that we denote by $\psi(\alpha) \coloneqq \lim_{T \to \infty} \log(Z_{\alpha, T})/T \in \R \cup \{\infty\}$ the free energy (the limit exists by the version of Feketes lemma for measurable superadditive functions). We set $e^{-\infty} \coloneqq 0$. In Section \ref{Alternating processes} we show:
\begin{thrm}
	\label{Theorem: Equvialent conditions for goodness in the setting of path measures}
	Assume (A1) and (A2) hold. Then the following conditions are equivalent:
	\begin{enumerate}
		\item There exists a $\lambda \in \R$ such that $\mathbb E_{\alpha} \big[e^{-\lambda T_1} F(\xi_1)\big] = 1$ and $\mathbb E_{\alpha} \big[T_1 e^{-\lambda T_1 } F(\xi_1)\big] < \infty$
		\item $\limsup_{T\to \infty} Z_{\alpha, 2T}/Z_{\alpha, T}^2 < \infty$
		\item $\liminf_{T\to \infty} \widehat \Gamma_{\alpha, T}( \text{the system is dormant at }0) > 0$
		\item $\liminf_{T \to \infty} Z_{\alpha, T} e^{-\psi(\alpha)T}>0$.
	\end{enumerate}
	If (1)-(4) hold then $\lambda = \psi(\alpha) - \alpha$ is the unique real number satisfying \eqref{goodness of alpha} and
	\begin{equation*}
	\lim_{T \to \infty} Z_{\alpha, T} e^{-\psi(\alpha)T} = \frac{1}{\psi(\alpha) \mathbb E_\alpha[ T_1 e^{-(\psi(\alpha) - \alpha)T_1} F(\xi_1)]}.
	\end{equation*}	
\end{thrm}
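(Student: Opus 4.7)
The plan is to prove $(2)\Leftrightarrow(3)$ and $(2)\Leftrightarrow(4)$ by elementary arguments, and then $(1)\Leftrightarrow(3)$ by a construction-plus-renewal analysis. The centrepiece is the identity
\[
\widehat\Gamma_{\alpha,T}(\text{the system is dormant at }0) \;=\; \frac{Z_{\alpha,T}^2}{Z_{\alpha,2T}},
\]
which I would prove by conditioning $\Gamma_{\alpha,T}$ on having no points in the crossing region $R_C=\{-T<s<0<t<T\}$. The restricted process factorises as the product of independent PPPs on $[-T,0]^2$ and $[0,T]^2$; multiplicativity of $F$ across non-intersecting clusters gives $\int_{E_0}F\,\mathrm d\Gamma_{\alpha,T}=e^{-\mu_{\alpha,T}(R_C)}\mathbf Z_{\alpha,T}^2$, and short bookkeeping against the relation $c_{\alpha,T}=2c_{\alpha,T/2}+\mu_{\alpha,T}(R_C)$ yields the claimed formula. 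This gives $(2)\Leftrightarrow(3)$ at once. For $(2)\Leftrightarrow(4)$, positivity $w\geq 0$ makes $T\mapsto\log Z_{\alpha,T}$ superadditive, so Fekete's lemma gives $Z_{\alpha,T}\leq e^{\psi(\alpha)T}$; then (4) yields $Z_{\alpha,2T}/Z_{\alpha,T}^2\leq 1/c^2$, while iterating the dyadic bound $\log Z_{\alpha,2T}\leq 2\log Z_{\alpha,T}+\log C$ from (2) and combining with superadditivity forces $Z_{\alpha,T}\geq C^{-1}e^{\psi(\alpha)T}$ for $T$ large.

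For $(1)\Rightarrow(3)$ together with the closing formula I would use the tilted alternating-renewal representation sketched in Section~\ref{Section: Outline}. Under (1) the reweighted cycles (independent $\mathrm{Exp}(\alpha+\lambda)$ dormant periods alternating with $\widehat\Xi_\alpha$-distributed clusters) are i.i.d.\ with finite mean $m=\mathbb E_\alpha[T_1 e^{-\lambda T_1}F(\xi_1)]$, and $\widehat\Gamma_{\alpha,T}$ is the stationary version conditioned on dormant boundary at $T$. The key renewal theorem then delivers
\[
\widehat\Gamma_{\alpha,T}(\text{dormant at }0)\;\longrightarrow\;\frac{1}{(\alpha+\lambda)m}>0,
\]
which combined with the central identity yields (3) and the precise asymptotic $Z_{\alpha,T}^2/Z_{\alpha,2T}\to 1/((\alpha+\lambda)m)$; comparing with the exponential rate from (4) forces $\alpha+\lambda=\psi(\alpha)$ and identifies the limit in the formula.

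The converse $(3)\Rightarrow(1)$ is the technically delicate step. I would introduce the dormant-boundary partition function
\[
\mathbf Z_{\alpha,L}^\circ\coloneqq \mathbb E_\alpha\bigl[F(\xi)\,\mathbf 1_{\text{no customer outstanding at }L}\bigr]=e^{-\alpha L}\sum_{n\geq 0}\mathbb E_\alpha\bigg[\prod_{i=1}^n F(\xi_i)\,e^{\alpha\tau_n}\mathbf 1_{\tau_n\leq L}\bigg],
\]
the second identity following from a first-cycle decomposition of the $M/G/\infty$ queue on $[0,L]$ with $\tau_n=T_1+\cdots+T_n$. Its Laplace transform computes to $[(\alpha+\lambda)(1-\phi(\lambda))]^{-1}$ with $\phi(\lambda)\coloneqq\mathbb E_\alpha[e^{-\lambda T_1}F(\xi_1)]$, strictly decreasing on its domain of finiteness, so the abscissa of convergence is the unique $\lambda^*$ solving $\phi(\lambda^*)=1$. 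Poisson independence of the pairs with departure past $L$ gives the exact factorisation $\mathbf Z_{\alpha,L}^\circ=\mathbf Z_{\alpha,L}\cdot e^{-m_L}$ with $m_L\to \alpha\mathbb E[\tau_1]$ bounded, so $\mathbf Z_{\alpha,L}^\circ$ shares the growth rate of $\mathbf Z_{\alpha,L}$, which is $\psi(\alpha)-\alpha$ by (3); this pins $\phi(\psi(\alpha)-\alpha)=1$. The main obstacle is the moment condition $-\phi'(\psi(\alpha)-\alpha)<\infty$: I would extract it via a Tauberian argument, exploiting that under (3) the normalised sequence $\mathbf Z_{\alpha,L}^\circ\, e^{-(\psi(\alpha)-\alpha)L}$ is bounded both above and below, which forces the pole of the Laplace transform at $\lambda^*$ to be simple and the residue $1/((\alpha+\lambda^*)(-\phi'(\lambda^*)))$ finite and strictly positive. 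Getting this upgrade from a bounded ratio to a genuine simple pole is the part where I expect to need either a careful renewal-equation estimate exploiting monotonicity of the cycle structure or an independent verification that $Z_{\alpha,T}e^{-\psi(\alpha)T}$ converges, not merely stays bounded.
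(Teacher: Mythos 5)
Your reductions among (2), (3), (4) are correct and in places cleaner than the paper's: the exact identity $\widehat\Gamma_{\alpha,T}(\text{dormant at }0)=Z_{\alpha,T}^2/Z_{\alpha,2T}$ (via Poisson independence across the crossing region and multiplicativity of $F$) is a tidier version of the midpoint-conditioning identity used in the proof of Theorem \ref{Goodness criterions for alternating processes}, which there carries an asymptotically constant prefactor because the reference measure is conditioned; and your superadditivity-plus-dyadic-iteration argument for $(2)\Leftrightarrow(4)$ is exactly the content of Lemma \ref{Lemma: limsup of quotient} applied directly to $Z_{\alpha,T}$. The implication $(1)\Rightarrow(3)$ via the tilted alternating representation also matches the paper (Proposition \ref{Proposition: Tilting measures} plus renewal theory), granting that you import the tilting identity rather than reprove it.

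The genuine gap is in $(3)\Rightarrow(1)$, and you flag it yourself. Two points fail as written. First, the assertion that the abscissa of convergence of the Laplace transform of $\mathbf Z^\circ_{\alpha,L}$ is ``the unique $\lambda^*$ solving $\phi(\lambda^*)=1$'' with $\phi(\lambda)\coloneqq\mathbb E_\alpha[e^{-\lambda T_1}F(\xi_1)]$ is not justified: the growth rate only gives $\phi(\lambda)<1$ for $\lambda>\psi(\alpha)-\alpha$ and $\phi(\lambda)\geq 1$ (possibly $=\infty$) for $\lambda<\psi(\alpha)-\alpha$, which is compatible with the defective situation $\phi(\psi(\alpha)-\alpha)<1$ and $\phi=\infty$ immediately to the left, in which case no root exists (cf.\ Proposition \ref{at least <= 1}). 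Second, and more seriously, the moment condition $\mathbb E_\alpha[T_1e^{-\lambda T_1}F(\xi_1)]<\infty$ — the heart of \eqref{goodness of alpha} — is left open; your proposed Tauberian upgrade from ``bounded ratio'' to ``simple pole with finite residue'' is precisely the step you admit you cannot yet carry out, and boundedness of $\mathbf Z^\circ_{\alpha,L}e^{-(\psi(\alpha)-\alpha)L}$ above and below does not by itself force $-\phi'(\lambda^*)<\infty$. The paper closes both points with one mechanism that you already have in hand: the quantity $f(L)=\mathbf Z^\circ_{\alpha,L}e^{-(\psi(\alpha)-\alpha)L}$ satisfies the renewal equation
\begin{equation*}
f(L)=\mathbb E_\alpha\big[\1_{\{T_1\leq L\}}e^{-(\psi(\alpha)-\alpha)T_1}F(\xi_1)\,f(L-T_1)\big]+e^{-\psi(\alpha)L},
\end{equation*}
and if either $\phi(\psi(\alpha)-\alpha)<1$ (defective case, Proposition \ref{renewal for subprobability-measures}) or $\phi(\psi(\alpha)-\alpha)=1$ with infinite mean (key renewal theorem, Theorem \ref{renewal theorem}, whose limit is $0$ when $\int s\,\mu(\mathrm ds)=\infty$), then $f(L)\to 0$, contradicting the strictly positive lower bound supplied by (3)/(4). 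Replacing your Laplace-transform/Tauberian step by this renewal-theoretic dichotomy would complete the proof; as it stands, the decisive implication is not established.
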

We prove Theorem \ref{Theorem: Equvialent conditions for goodness in the setting of path measures} through another application of renewal theory. More specifically, we use that the function $f:[0, \infty) \to \R$ defined by
\begin{equation*}
f(T) \coloneqq e^{-(\psi(\alpha) - \alpha)T} \mathbf Z_{\alpha, T} \cdot \mathbb P_\alpha(\text {the }M/G/\infty \text{-queue is dormant at $T$})
\end{equation*}
satisfies the renewal equation
\begin{equation*}
f(T) = \mathbb E\big[\1_{\{T_1 \leq t\}} e^{-(\psi(\alpha) - \alpha) T_1} F(\xi_1) f(T-T_1)\big] + e^{-\psi(\alpha) T} \quad \text{ for all }T\geq 0.
\end{equation*}
Combining the previous considerations, we obtain 
\begin{thrm}[Existence of infinite volume measure]
	\label{Theorem: Existence infinite volume measure, long version}
	Assume that
	(A1)-(A3) hold.
	Then there exists a measure $\mathbb P_{\alpha, \infty}$ on $(C(\R, \R^d), \mathcal A)$ such $\mathbb P_{\alpha, T} \to \mathbb P_{\alpha, \infty}$ locally in total variation as $T\to \infty$. For any $a<b$ the restriction of $\mathbb P_{\alpha, \infty}$ to $\mathcal A_{a}^b$ is given by
	\begin{equation*}
	\mathbb P_{\alpha, \infty}(A) = \int_{\mathbf N(\triangle)} \widehat \Gamma_{\alpha, \operatorname{st}}(\mathrm d \xi) \, \mathbf P_{R_a^b\xi}(A)
	\end{equation*}
	for all $A \in \mathcal A_{a}^b$ where $\widehat \Gamma_{\alpha, \operatorname{st}}$ is the stationary distribution of the process obtained by alternating $\operatorname{Exp}(\psi(\alpha))$ distributed dormant periods and the distribution
	\begin{equation*}
	\widehat \Xi_\alpha(\mathrm d\xi) = \frac{\alpha}{\psi(\alpha)} e^{-(\psi(\alpha)-\alpha) a(\xi)} F(\xi) \, \Xi_\alpha(\mathrm d\xi)
	\end{equation*}
	of active clusters.
\end{thrm}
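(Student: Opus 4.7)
The plan is to combine Proposition \ref{local covergence hatGamma} (convergence of the Gibbs measures on marked partitions) with the mixing representation \eqref{Mixing of restriction of path measures}, and to use Theorem \ref{Theorem: Equvialent conditions for goodness in the setting of path measures} to identify the critical parameter $\lambda = \psi(\alpha)-\alpha$ so that the stationary measure $\widehat\Gamma_{\alpha,\operatorname{st}}$ takes the claimed explicit form. The main work of Section \ref{Alternating processes} is assumed for these two inputs.

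First, under (A1)--(A3), Theorem \ref{Theorem: Equvialent conditions for goodness in the setting of path measures} applies: condition (2), which is exactly \eqref{Our condition}, forces the existence of a unique $\lambda$ satisfying \eqref{goodness of alpha}, and identifies $\lambda=\psi(\alpha)-\alpha$. Consequently $\alpha+\lambda=\psi(\alpha)$, the reweighting factor in $\widehat\Xi_\alpha$ equals $\alpha/\psi(\alpha)$, and the dormant periods in the stationary alternating process are $\operatorname{Exp}(\psi(\alpha))$-distributed. This pins down $\widehat\Gamma_{\alpha,\operatorname{st}}$ as the stationary alternating renewal process described in the statement, and Proposition \ref{local covergence hatGamma} gives $\widehat\Gamma_{\alpha,T}\circ(R_a^b)^{-1}\to\widehat\Gamma_{\alpha,\operatorname{st}}\circ(R_a^b)^{-1}$ in total variation for every $a<b$.

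Next, for fixed $a<b$, define a candidate measure $\mathbb P_{\alpha,\infty}^{a,b}$ on $\mathcal A_a^b$ by the right-hand side of the displayed formula. Since $\xi\mapsto \mathbf P_{R_a^b\xi}(A)$ depends only on $R_a^b\xi$, both $\mathbb P_{\alpha,T}$ (via \eqref{Mixing of restriction of path measures}) and $\mathbb P_{\alpha,\infty}^{a,b}$ are mixtures of the probability kernel $\zeta\mapsto \mathbf P_\zeta$ against the push-forward measures $\widehat\Gamma_{\alpha,T}\circ(R_a^b)^{-1}$ and $\widehat\Gamma_{\alpha,\operatorname{st}}\circ(R_a^b)^{-1}$, respectively. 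Since the kernel takes values in $[0,1]$, the elementary bound
\begin{equation*}
\sup_{A\in\mathcal A_a^b}\big|\mathbb P_{\alpha,T}(A)-\mathbb P_{\alpha,\infty}^{a,b}(A)\big|\leq \big\|\widehat\Gamma_{\alpha,T}\circ(R_a^b)^{-1}-\widehat\Gamma_{\alpha,\operatorname{st}}\circ(R_a^b)^{-1}\big\|_{\mathrm{TV}}
\end{equation*}
combined with Proposition \ref{local covergence hatGamma} yields local total variation convergence on $\mathcal A_a^b$.

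Finally, the family $(\mathbb P_{\alpha,\infty}^{a,b})_{a<b}$ is consistent: if $[a',b']\subseteq[a,b]$ and $A\in\mathcal A_{a'}^{b'}$, then a cluster intersects $[a',b']$ only if it intersects $[a,b]$, so $R_{a'}^{b'}\circ R_a^b=R_{a'}^{b'}$, and on such $A$ the measure $\mathbf P_{R_a^b\xi}$ and $\mathbf P_{R_{a'}^{b'}\xi}$ agree by the product-over-clusters structure of $\mathbf P_\xi$ recalled in Section \ref{Section: Outline}. Hence $\mathbb P_{\alpha,\infty}^{a,b}$ and $\mathbb P_{\alpha,\infty}^{a',b'}$ agree on $\mathcal A_{a'}^{b'}$, and since $\mathcal A=\bigcup_{a<b}\mathcal A_a^b$ is generated by an increasing sequence of sub-$\sigma$-algebras, Carathéodory's extension theorem produces a unique probability measure $\mathbb P_{\alpha,\infty}$ on $(C(\mathbb R,\mathbb R^d),\mathcal A)$ whose restriction to $\mathcal A_a^b$ coincides with $\mathbb P_{\alpha,\infty}^{a,b}$. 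The main obstacle is not in this wrap-up but in Proposition \ref{local covergence hatGamma} and Theorem \ref{Theorem: Equvialent conditions for goodness in the setting of path measures}: controlling the stationary alternating renewal construction and matching the boundary-condition-conditioned finite-volume measure $\widehat\Gamma_{\alpha,T}$ to its stationary counterpart; this is handled in Section \ref{Alternating processes} via a renewal equation satisfied by $T\mapsto e^{-(\psi(\alpha)-\alpha)T}\mathbf Z_{\alpha,T}\,\mathbb P_\alpha(\text{dormant at }T)$.
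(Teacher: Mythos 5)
Your proposal is correct and follows essentially the same route as the paper: identify $\lambda=\psi(\alpha)-\alpha$ via Theorem \ref{Theorem: Equvialent conditions for goodness in the setting of path measures}, feed the resulting local total variation convergence of $\widehat\Gamma_{\alpha,T}$ (Proposition \ref{local covergence hatGamma}, i.e.\ Corollary \ref{Corollary: Convergence of GIbbs measures if we are good}) into the mixture representation \eqref{Mixing of restriction of path measures}, and conclude by the kernel bound and consistency of the family $(\mathbb P_{\alpha,\infty}^{a,b})_{a<b}$. The only difference is cosmetic: the paper proves the identification $\varphi(\alpha)=\psi(\alpha)-\alpha$ in the same breath, while you invoke it as a black box, and you spell out the kernel/consistency step slightly more explicitly than the paper does.
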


For the Fröhlich Polaron path measure, Mukherjee and Varadhan \cite{MV19} show (for $\alpha$ such that \eqref{goodness of alpha} holds) an ordinary central limit theorem in the ``diagonal limit'', using that their counterparts $\mathbf P_{\xi, u}$ of our measures $\mathbf P_\xi$ are Gaussian. We use a different approach that allows us to show Theorem \ref{Functional central limit theorem}, i.e. a full functional central limit theorem, for more general potentials $w$.
Roughly speaking, we use the following rather natural arguments (assuming that \eqref{Our condition} is satisfied):
\begin{itemize}
	\item Let $\hat \eta_s \sim \widehat{\Gamma}_{\alpha, \operatorname{st}}$ be a stationary version of the process obtained by alternating the reweighted dormant periods and clusters. Let $s_1 < t_1 < \hdots < s_k< t_k$. For large $n$ the processes $R_{ns_1}^{nt_1}\hat \eta_s,\hdots, R_{ns_k}^{nt_k}\hat \eta_s$ are approximately independent due to the renewal structure. By the product structure of the measures $\mathbf P_\xi$ along non-intersecting clusters, the increments $X_{ns_1, nt_1}, \hdots, X_{ns_k, nt_k}$ are approximately independent under $\mathbb P_{\alpha, \infty}$ for large $n$.
	\item For $a<b$ and large $n$, approximately $n(b-a)/\hat{\mathbb E}_\alpha[\hat T_1]$ renewal points fall into $[na, nb]$. By the product structure, the increment $\frac{1}{\sqrt{n}}X_{an, bn}$ is under $\mathbb P_{\alpha, \infty}$ approximately the sum of $n(b-a)/\hat{\mathbb E}_\alpha[\hat T_1]$ independent increments.
	\item To prove an ordinary central limit theorem, we still need that the variance of an increment along a single renewal period is finite. For the functional central limit theorem, we additionally need tightness of the family of probability measures $\{\mathbb P_{\alpha, \infty} \circ (X^n)^{-1}: \, n\in \N\}$. We  assume that $w(t, \cdot)$ is quasiconcave for all $t>0$ and apply the Gaussian correlation inequality in order to show these two prerequisites.
\end{itemize}
While the assumption of quasiconcavity of $w(t, \cdot)$ is satisfied for the Fröhlich polaron and several other cases, it is conceivable that at least the finite variance along a single renewal period (and thus the ordinary central limit theorem) can be obtained under weaker assumptions on $w$. We do not pursue this any further here and settle for:
\begin{thrm}[Functional Central limit theorem]
	\label{Theorem: Functional central limit theorem, long version}
	Assume that (A1)-(A3) hold, and that $w(t, \cdot)$ is quasiconcave for all $t>0$.
	Then the distribution of $X^n$ under $\mathbb P _{\alpha, \infty}$ converges weakly to the distribution of $\sqrt{\Sigma} X$ under $\mathcal W$ where
	\begin{equation}
	\label{Equation: Covariance matrix}
	\Sigma \coloneqq  \frac{\hat{\mathbb E}_\alpha[\Sigma(\hat d_1, \hat \xi_1)]}{\hat{\mathbb E}_\alpha[\widehat T_1]} \quad \text{with } \Sigma(\hat d_1, \hat \xi_1)  \coloneqq \hat d_1 I_d+ \mathbf E_{\hat \xi_1}\big[ X_{0, \hat a_1} \cdot X_{0, \hat a_1} ^T \big],
	\end{equation}
	and $(\hat d_1, \hat \xi_1) \sim \operatorname{Exp}(\psi(\alpha)) \otimes \widehat \Xi_{\alpha}$, $\hat a_1$ is the length of the cluster $\hat \xi_1$ and $\widehat T_1 = \hat a_1 + \hat d_1$.
	Additionally, $\Sigma \leq I_d$ in the sense of quadratic forms.
\end{thrm}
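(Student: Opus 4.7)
The plan is to exploit the representation from Theorem \ref{Theorem: Existence infinite volume measure, long version}: restricted to $\mathcal A_a^b$, $\mathbb P_{\alpha, \infty}$ is the mixture $\int \widehat \Gamma_{\alpha, \operatorname{st}}(\mathrm d\xi)\, \mathbf P_{R_a^b \xi}$, and under $\widehat \Gamma_{\alpha, \operatorname{st}}$ the cluster structure is a stationary renewal process alternating $\operatorname{Exp}(\psi(\alpha))$-distributed dormant periods with $\widehat \Xi_\alpha$-distributed active clusters of finite expected cycle length $\hat{\mathbb E}_\alpha[\widehat T_1]$. The key structural input is the product structure of $\mathbf P_\xi$ across non-intersecting clusters: this turns each macroscopic increment under $\mathbb P_{\alpha, \infty}$ into a sum of independent per-cycle contributions and reduces the FCLT to a classical renewal/Donsker argument. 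Quasiconcavity of $w(t, \cdot)$ enters only through a Gaussian correlation (Anderson/Royen) bound used to establish the necessary moment and maximal inequalities under the tilted measures $\mathbf P_\xi$.

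For finite-dimensional convergence, fix $0 = t_0 < t_1 < \cdots < t_k$. By the strong renewal law, the number of full cycles contained in $[n t_{i-1}, n t_i]$ is asymptotic to $n(t_i - t_{i-1})/\hat{\mathbb E}_\alpha[\widehat T_1]$, and by the product structure of $\mathbf P_\xi$ the increment $X_{nt_{i-1}, nt_i}$ is, up to $O(1)$ boundary contributions from at most two straddling cycles, a sum of independent per-cycle random vectors of the form $Z + X_{0, \hat a_1}$ with $Z \sim \mathcal N(0, \hat d_1 I_d)$ and $(\hat d_1, \hat \xi_1) \sim \operatorname{Exp}(\psi(\alpha)) \otimes \widehat \Xi_\alpha$; each such vector has covariance $\Sigma(\hat d_1, \hat \xi_1)$ from \eqref{Equation: Covariance matrix}. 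Cycles falling into disjoint macroscopic intervals are asymptotically mutually disjoint and hence independent, so the multivariate Lindeberg CLT applied to the diffusively rescaled sum delivers the joint convergence of $(X^n_{t_1}, \ldots, X^n_{t_k})$ to the centered Gaussian with covariance $\Sigma$.

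The integrability $\hat{\mathbb E}_\alpha[\operatorname{tr}\Sigma(\hat d_1, \hat \xi_1)] < \infty$ needed in the step above, together with the bound $\Sigma \leq I_d$, will follow from Royen's Gaussian correlation theorem. The Radon-Nikodym derivative $\mathrm d\mathbf P_\xi / \mathrm d\mathcal W$ is proportional to a product of functions $x \mapsto v(t_i - s_i, x)$ of Brownian increments, each of which by (A1) and the quasiconcavity hypothesis is even with symmetric convex superlevel sets. Writing each factor via the layer-cake identity $v = \int_0^\infty \mathbf 1_{\{v > t\}}\, \mathrm dt$ and applying Royen's inequality to intersections of the resulting symmetric convex sets with the symmetric strip $\{|\langle e, X_{0, \hat a_1}\rangle| \leq r\}$ gives
\begin{equation*}
\mathbf P_{\hat \xi_1}\big(|\langle e, X_{0, \hat a_1}\rangle| \leq r\big) \geq \mathcal W\big(|\langle e, X_{0, \hat a_1}\rangle| \leq r\big)
\end{equation*}
for every unit vector $e \in \mathbb R^d$ and every $r > 0$. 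Integrating the complementary tail yields $\mathbf E_{\hat \xi_1}[\langle e, X_{0, \hat a_1}\rangle^2] \leq \hat a_1$, so after integration over $\widehat \Xi_\alpha$ and normalization by $\hat{\mathbb E}_\alpha[\widehat T_1]$ both the finiteness of $\Sigma$ and $\Sigma \leq I_d$ follow.

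For tightness of $\{\mathbb P_{\alpha, \infty} \circ (X^n)^{-1} : n \in \mathbb N\}$ in $C(\mathbb R, \mathbb R^d)$, I plan to couple $X^n$ at the embedded renewal times to a sum of i.i.d.\ per-cycle increments of finite variance, for which Donsker's invariance principle applies directly and delivers a Brownian limit with covariance $\Sigma$, and then to dominate the intra-cycle fluctuation via the maximal estimate
\begin{equation*}
\mathbf P_{\hat \xi_1}\Big(\sup_{t \in [0, \hat a_1]} |X_t| > r\Big) \leq \mathcal W\Big(\sup_{t \in [0, \hat a_1]} |X_t| > r\Big),
\end{equation*}
which follows from the same Gaussian correlation argument since $\{\sup_t |X_t| \leq r\}$ is a symmetric convex subset of path space. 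The main obstacle is precisely this transfer of Anderson-type concentration from the Gaussian reference $\mathcal W$ to the possibly singular, infinite-dimensional tilting $\mathbf P_\xi$, uniformly in the random cluster $\xi$; once this is in hand, intra-cycle fluctuations are dominated by Brownian maxima on intervals of finite mean length and become negligible after the $n^{-1/2}$ rescaling, closing the proof.
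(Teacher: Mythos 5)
Your finite-dimensional part and the bound $\Sigma \leq I_d$ run along essentially the same lines as the paper: renewal decomposition of the increment into per-cycle contributions of covariance $\Sigma(\hat d_1,\hat\xi_1)$, asymptotic independence over separated macroscopic intervals from the renewal structure (Lemma \ref{Mixing property}), a CLT for the random number of cycles, and the Gaussian correlation inequality for the second-moment estimate (Corollary \ref{Covariance estimate}). Two caveats there: since the number of cycles in $[nt_{i-1},nt_i]$ is random, a plain Lindeberg CLT is not enough; you need a random-index version (the paper uses the Anscombe--R\'enyi theorem, Theorem \ref{Reyni-Theorem}, together with the strong renewal law), and your claim that clusters in disjoint macroscopic intervals are ``asymptotically independent'' is exactly what Lemma \ref{Mixing property} proves via the forward recurrence time and total-variation convergence to stationarity, so it should not be taken for granted.

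The genuine gap is in the tightness step, and you flag it yourself: the maximal inequality $\mathbf P_{\hat\xi_1}\big(\sup_{t\in[0,\hat a_1]}|X_t|>r\big)\leq \mathcal W\big(\sup_{t\in[0,\hat a_1]}|X_t|>r\big)$ does not follow by quoting Royen's theorem on ``a symmetric convex subset of path space'', since the Gaussian correlation inequality is a finite-dimensional statement and $\mathbf P_\xi$ is a non-Gaussian tilting of Wiener measure. The missing argument is a finite-dimensional approximation: restrict to a finite rational skeleton of time points, note that the relevant vector of increments $(X_{q_i,q_j})_{i,j}$ together with the cluster increments $(X_{s_i,t_i})_i$ is jointly centered Gaussian under $\mathcal W$, apply the correlation inequality with the factors $v(t_i-s_i,\cdot)$ as symmetric quasiconcave functions and the indicator of the bad event as symmetric quasiconvex, and then pass to the limit by continuity of measure from below; this is exactly what Lemma \ref{modulus of continuity estimate} does. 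Once that lemma is available, the paper's route to tightness is also simpler than your coupling at renewal times: it bounds $\mathbb P_{\alpha,\infty}(\omega_m(X^n,\delta)>\varepsilon)$ directly by $\mathcal W(\omega_m(X,\delta)>\varepsilon)$ via the mixture representation and Brownian scaling, with no boundary-cycle bookkeeping. As written, your proposal leaves its self-declared ``main obstacle'' open, so the proof is incomplete, though the missing step is fixable by the skeleton argument just described.
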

Here $\hat d_1 I_d$ is the contribution of a dormant period on which the process is a Brownian motion. Equation \eqref{Equation: Covariance matrix} directly entails that $\Sigma$ is positive definite. In order to show that $\Sigma \leq I_d$, we use quasiconcavity of $w(t, \cdot)$ and the Gaussian correlation inequality.\\
\\
If we replace $w$ with the potential $(t, x) \mapsto w(t, x) + g(t)$ our partition function becomes $Z_{\alpha, 2T} e^{c_{\alpha, T}}$. By Equation \eqref{growth of measure of intensity measure(triangle)}, 
\begin{equation}
\label{growth of calphat}
e^{c_{\alpha, T}} \sim e^{2\alpha T -\alpha \mathbb E[\tau_1]}
\end{equation}
as $T\to \infty$. Thus, adding $g$ to $w$ does not change whether \eqref{Our condition} holds or not. After the transformation of the potential, we have $v>1$ and thus $\mathbb E_\alpha[F(\xi_1)]>1$, which turns out to be beneficial for showing \eqref{goodness of alpha} for sufficiently small $\alpha$: Then, by the intermediate value theorem, the condition $\mathbb E_{\alpha}[F(\xi_1)] < \infty$ is sufficient such that \eqref{goodness of alpha} is satisfied. We apply this in Section \ref{Section: Growth condition for Polaron-type models} and prove Proposition \ref{prop:sufficientConditionsForGC} c). Besides giving a similar, slightly stronger,  upper estimate on $F$ (providing necessarily a different proof which does not rely on the specific choice of $w$) our approach is different from that used in \cite{MV19}, replacing the Markov chain argument with an application of the optional stopping theorem, and thus using an argument that does not need $g$ to be the density of an exponential distribution.
At the end of Section \ref{Section: Growth condition for Polaron-type models} (see Section \ref{Fun calculations}) we will use the fact that $\psi(\alpha) - \alpha$ is the unique solution to \eqref{goodness of alpha} (assuming that \eqref{Our condition} holds) and perform a few heuristic calculations that connect the free energy to properties of the reweighted point process.

\section{Gibbs measures on partitions of $[-T, T]$}
\label{Alternating processes}
It will be notationally convenient to identify a point process on $\triangle$ that alternates between dormant periods and active clusters with a random partition of $\R$ into dormant and active intervals, each active interval beeing equipped with the corresponding cluster as a ``state''.
We will consider general systems which alternate between a dormant state and (possible multiple) active states. Starting dormant at time zero, the system switches into an active state after an exponentially distributed waiting time. After an independently drawn waiting time (whose distribution might depend on the state) the system turns dormant again. Iterating this procedure generates a partition of $[0, \infty)$, where each interval of the partition is in a certain state. For a visualization see Figure \ref{fig:partitionrealaxis}. By an application of renewal theory, there exists a stationary version of the process. We will use its distribution as a reference measure for a certain class of Gibbs measures under the dormant boundary condition. We will assume that dormant intervals do not contribute energy and that the Hamiltonian is additive in the active states. Given a growth condition on the partition function, we will see that the Gibbs measures can be obtained by simply reweighting the distributions of the independent components mentioned above. This yields the existence of an infinite volume measure. \\
\\
Before continuing, we give a short summary of some standard results from renewal theory. A renewal process $(S_n)_{n\geq 0}$ is a sequence of random variables such that the so called interarrival times $(T_n)_{n\geq 1} \coloneqq (S_{n} - S_{n-1})_{n \geq 1}$ form an iid sequence of a.s. positive random variables and such that $S_0 = 0$ almost surely. For $t\geq 0$ let $N(t) \coloneqq \#\{n: \, S_n \leq t\}$ be the number of renewal points in the interval $[0, t]$. The renewal function $U: [0, \infty) \to [0, \infty)$,  $U(t) \coloneqq \mathbb E[N(t)]$ is the expected number of renewal points (including the origin) in the interval $[0, t]$. Then it holds
\begin{thrm}\cite[p. 140]{asmussen2003applied}
	\label{Elementary Renewal Theorem}
	Irrespective whether $\mathbb E[T_1]$ is finite or infinite (setting $1/\infty = 0$) we have as $t\to \infty$
	\begin{equation*}
		N(t)/t \to 1/\mathbb E[T_1] \text { almost surely and }U(t)/t \to 1/\mathbb E[T_1].
	\end{equation*}
\end{thrm}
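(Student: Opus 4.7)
My plan is to prove the two claims separately: first the almost sure statement about $N(t)/t$ via the strong law of large numbers, then bootstrap to the statement about $U(t)/t$ via a truncation argument that sidesteps having to establish uniform integrability directly. For the first claim, I start from the identity $S_{N(t)} \leq t < S_{N(t)+1}$ that defines $N(t)$. Since $T_i > 0$ almost surely, the partial sums $S_n$ tend to infinity, and hence $N(t) \to \infty$ a.s.\ as $t \to \infty$. Dividing the defining inequality by $N(t)$ produces the sandwich
\[
\frac{S_{N(t)}}{N(t)} \;\leq\; \frac{t}{N(t)} \;<\; \frac{S_{N(t)+1}}{N(t)+1}\cdot \frac{N(t)+1}{N(t)},
\]
and the SLLN applied to $(T_n)_{n \geq 1}$ gives $S_n/n \to \mathbb{E}[T_1] \in (0,\infty]$ a.s. Both sides of the sandwich therefore tend to $\mathbb{E}[T_1]$, and inverting yields $N(t)/t \to 1/\mathbb{E}[T_1]$ a.s., using the convention $1/\infty = 0$ in the case of an infinite mean.

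For the expected-value claim, I would obtain the two inequalities separately. Wald's identity applied to the stopping time $N(t)+1$ (which has finite mean $U(t)+1$ by the standard renewal bound) gives $(U(t)+1)\mathbb{E}[T_1] = \mathbb{E}[S_{N(t)+1}] > t$, hence $\liminf_{t \to \infty} U(t)/t \geq 1/\mathbb{E}[T_1]$. For the matching upper bound, I truncate: for any $c > 0$, set $T_n^{(c)} := T_n \wedge c$, and let $S_n^{(c)}, N^{(c)}(t), U^{(c)}(t)$ be the associated renewal quantities. Because $T_n^{(c)} \leq T_n$, we have $N^{(c)}(t) \geq N(t)$ pointwise and therefore $U^{(c)}(t) \geq U(t)$. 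Wald applied to the truncated process together with the deterministic overshoot bound $S^{(c)}_{N^{(c)}(t)+1} \leq t + c$ yields
\[
\frac{U(t)}{t} \;\leq\; \frac{U^{(c)}(t)}{t} \;\leq\; \frac{t+c}{t\,\mathbb{E}[T_1^{(c)}]} - \frac{1}{t}.
\]
Letting first $t \to \infty$ and then $c \to \infty$, monotone convergence gives $\mathbb{E}[T_1^{(c)}] \nearrow \mathbb{E}[T_1]$, so $\limsup_{t \to \infty} U(t)/t \leq 1/\mathbb{E}[T_1]$. This covers both the finite-mean case and the infinite-mean case (where the limit is $0$) in one stroke.

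The main obstacle is the upper bound in the second step: the almost sure convergence of $N(t)/t$ does not automatically transfer to convergence of expectations, because without further control on the tail of $T_1$ the variables $N(t)/t$ need not be uniformly integrable. The truncation device resolves this cleanly. Bounded interarrival times have a uniformly bounded overshoot, so Wald's identity delivers a sharp upper bound on $U^{(c)}(t)$; the monotonicity $N^{(c)}(t) \geq N(t)$ then propagates the estimate to $U(t)$. The order of limits (first $t \to \infty$, then $c \to \infty$) is essential, and no stronger hypothesis on the distribution of $T_1$ is needed.
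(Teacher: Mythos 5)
Your argument is correct and is essentially the classical proof of the elementary renewal theorem found in the cited reference (LLN sandwich for the almost sure statement, then Wald's identity for the lower bound and truncation of the interarrival times for the upper bound); the paper itself offers no proof, only the citation to Asmussen. The only nitpick is an indexing convention: since the paper counts the origin as a renewal point, the defining inequality reads $S_{N(t)-1} \leq t < S_{N(t)}$ rather than $S_{N(t)} \leq t < S_{N(t)+1}$, a shift by one that affects none of the limits.
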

Let $\mu$ be a Radon measure on $[0, \infty)$ with $\mu(\{0\}) = 0$. Let $z:[0, \infty)\to \R$ be a locally bounded measurable function. Then the renewal equation
\begin{equation*}
Z(t) = \int_{[0, t]} Z(t-s) \, \mu(\mathrm ds) + z(t) \quad \text{for all }t\geq 0
\end{equation*}
has an unique locally bounded measurable solution $Z_z: [0, \infty) \to \R$ given by the convolution of $z$ with the renewal measure $\sum_{n=0}^\infty \mu^{*n}$. We will apply the renewal theorem in the following form:
\begin{thrm}[Renewal theorem]
	\label{renewal theorem}\cite{Arjas1978}
	If $\mu$ is a probability measure that is absolutely continuous with respect to the Lebesgue measure and $g:[0, \infty) \to [0, \infty)$ is bounded and Lebesgue integrable with $g(t) \to 0$ as $t\to \infty$ then
	\begin{equation*}
	\lim_{t \to \infty} \sup_{|z|\leq g} \Big|Z_z(t) - \frac{1}{\int_0^\infty s \,  \mu(\mathrm ds)} \int_0^\infty z(s) \, \mathrm ds \Big|  = 0.
	\end{equation*}	
\end{thrm}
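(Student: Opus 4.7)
The strategy is to combine the convolution representation of the solution to the renewal equation with Blackwell's renewal density theorem and then to upgrade pointwise convergence to the uniform-in-$z$ convergence by exploiting $g$ as a common envelope. First I would recall that the unique locally bounded solution is $Z_z = U * z$, where $U = \sum_{n \geq 0} \mu^{*n}$ is the renewal measure. Since $\mu$ is absolutely continuous with density $f$, the measure $U - \delta_0$ has a density $u$ satisfying the renewal equation $u = f + f*u$, and Blackwell's theorem in this absolutely continuous setting gives $u(t) \to 1/m$ as $t \to \infty$, where $m \coloneqq \int_0^\infty s\,\mu(\mathrm ds) \in (0,\infty]$ and $1/\infty \coloneqq 0$. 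This is the one genuinely non-elementary ingredient and, for the case $m = \infty$ and possibly unbounded $f$, is the content that would be imported from \cite{Arjas1978}.

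Next I would perform the substitution $r = t - s$ in the representation $Z_z(t) = z(t) + \int_0^t z(t-s)\, u(s)\,\mathrm ds$ and subtract the target limit to obtain
\begin{equation*}
Z_z(t) - \frac{1}{m}\int_0^\infty z(r)\,\mathrm dr = z(t) + \int_0^t z(r)\bigl(u(t-r) - \tfrac{1}{m}\bigr)\,\mathrm dr - \frac{1}{m}\int_t^\infty z(r)\,\mathrm dr.
\end{equation*}
Since $|z|\leq g$, the first summand is at most $g(t)$, which tends to $0$, and the third is at most $\tfrac{1}{m}\int_t^\infty g(r)\,\mathrm dr$, which tends to $0$ by integrability of $g$; both bounds are uniform in $z$.

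For the middle term, given $\varepsilon > 0$, I would pick $A > 0$ large enough that $\int_A^\infty g(r)\,\mathrm dr < \varepsilon$, that $g(r) < \varepsilon$ for $r \geq A$, and that $|u(s) - 1/m| < \varepsilon$ for $s \geq A$. Splitting the interval of integration at $t - A$, the contribution of $[0, t-A]$ is bounded by $\varepsilon \int_0^\infty g(r)\,\mathrm dr$ by the Blackwell estimate on $u$; the contribution of $[t-A, t]$ is bounded by $\varepsilon\bigl(U([0,A]) - 1 + 1/m\bigr)$, using $g(r) < \varepsilon$ on this range for $t$ large enough together with the identity $\int_0^A u(s)\,\mathrm ds = U([0,A]) - 1$. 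All bounds are uniform over $\{z : |z| \leq g\}$.

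The main obstacle, and the reason this is not completely routine, is that the renewal density $u$ need not be bounded near the origin when only absolute continuity of $\mu$ is assumed. This is why one must never use $\sup_{[0,A]} u$ but instead the local mass $U([0,A])$, which is finite because the renewal measure is locally finite; this subtlety is what forces the $[t-A,t]$-split above to be controlled through the smallness of $g(r)$ for $r\geq A$ rather than through pointwise smallness of $u(t-r)$. In the $m = \infty$ regime the same argument applies verbatim with $1/m = 0$ and with Blackwell's statement read as $u(t) \to 0$.
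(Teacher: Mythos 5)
Your overall architecture (the convolution representation $Z_z = U*z$, the splitting at $t-A$, and the use of $g$ as a common envelope to get uniformity in $z$) is sound and is essentially how uniform key renewal theorems are proved; the paper itself only cites \cite{Arjas1978} and gives no proof. But the key lemma you import is false as stated: mere absolute continuity of $\mu$ does not imply that the renewal density $u=\sum_{n\ge 1}f^{*n}$ converges pointwise (or even essentially) to $1/m$. Since $u \ge f$ a.e., any probability density $f$ with unbounded spikes marching off to infinity (say $f\ge n$ on a tiny interval near $t=n$ for every $n$, which is compatible with $f\in L^1$ and finite mean) gives $\limsup_{t\to\infty}u(t)=\infty$. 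The classical renewal density theorem requires direct Riemann integrability (or boundedness plus decay) of $f$, which is precisely \emph{not} assumed here, and \cite{Arjas1978} does not contain the statement you want to import --- it proves the uniform theorem itself by other means. Consequently the step ``$|u(s)-1/m|<\varepsilon$ for all $s\ge A$'', on which your bound for the $[0,t-A]$ contribution rests, has no justification under the stated hypotheses.

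The standard repair, which keeps your splitting intact, is Stone's decomposition for spread-out distributions: $U = U_1 + U_2$ with $U_1$ a finite measure and $U_2$ absolutely continuous with a bounded continuous density $u_2$ satisfying $u_2(t)\to 1/m$ as $t\to\infty$ (with $1/\infty \coloneqq 0$). Then
\begin{equation*}
Z_z(t) = \int_{[0,t]} z(t-s)\, U_1(\dd s) + \int_0^t z(r)\, u_2(t-r)\, \dd r,
\end{equation*}
where the first term tends to $0$ uniformly over $\{|z|\le g\}$ by dominated convergence (bound by $g(t-s)\le \|g\|_\infty$, note $g(t-s)\to 0$ pointwise in $s$, and $U_1$ is finite), and your $A$-splitting applies verbatim to the second term, now legitimately using $\sup_{s\ge A}|u_2(s)-1/m|<\varepsilon$ together with the boundedness of $u_2$ on $[0,A]$ (so the $U([0,A])$ workaround is no longer even needed). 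Two minor points: your bound for the $[t-A,t]$ piece should read $\varepsilon\bigl(U([0,A]) - 1 + A/m\bigr)$ rather than $\varepsilon\bigl(U([0,A]) - 1 + 1/m\bigr)$, and in the case $m=\infty$ the convergence $u_2(t)\to 0$ is again part of Stone's theorem, not of a naive renewal density theorem.
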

For sub-probability measures, the following holds:
\begin{prop}\cite[p. 163]{asmussen2003applied}
	Assume $\mu([0, \infty))<1$. If $z$ is bounded and $\lim_{t \to \infty} z(t)$ exists then
	\begin{equation*}
	\label{renewal for subprobability-measures}
	\lim_{t \to \infty} Z_z(t) = \frac{\lim_{t \to \infty} z(t)}{1 - \mu([0, \infty))}.
	\end{equation*}	
\end{prop}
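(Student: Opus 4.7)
The plan is to exploit that when $\mu$ is a genuinely sub-probability measure on $[0,\infty)$ with total mass $p := \mu([0,\infty))<1$, the renewal measure $U := \sum_{n=0}^\infty \mu^{*n}$ is in fact a \emph{finite} measure, not merely locally finite: by the multiplicativity of total mass under convolution and the geometric series,
\begin{equation*}
U([0,\infty)) = \sum_{n=0}^\infty \mu^{*n}([0,\infty)) = \sum_{n=0}^\infty p^n = \frac{1}{1-p}.
\end{equation*}
This finiteness turns the renewal equation into an essentially trivial dominated convergence problem, which is the reason the sub-probability case does not need the considerably deeper Blackwell/Smith machinery invoked in Theorem \ref{renewal theorem}.

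First I would record the explicit convolution representation $Z_z = z \ast U$ of the unique locally bounded solution, which is already noted in the text preceding the statement. Writing it out,
\begin{equation*}
Z_z(t) = \int_{[0,t]} z(t-s)\, U(\mathrm{d}s) = \int_{[0,\infty)} z(t-s)\,\mathbf{1}_{[0,t]}(s)\, U(\mathrm{d}s).
\end{equation*}
For each fixed $s \in [0,\infty)$, one has $t-s \to \infty$ as $t\to\infty$, so the hypothesis that $z_\infty := \lim_{t\to\infty} z(t)$ exists yields $z(t-s)\mathbf{1}_{[0,t]}(s) \to z_\infty$ pointwise in $s$. Since $z$ is bounded, the integrand is dominated uniformly in $t$ by the constant $\|z\|_\infty$, which is $U$-integrable because $U$ is finite. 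Dominated convergence then gives
\begin{equation*}
\lim_{t\to\infty} Z_z(t) = z_\infty \cdot U([0,\infty)) = \frac{z_\infty}{1-p},
\end{equation*}
as desired.

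There is no real obstacle: the whole argument reduces to the geometric summability of $\sum_n p^n$ and a single application of dominated convergence. The only point that requires a moment's care is the verification that the locally bounded solution supplied by the text actually coincides with $z\ast U$; this follows either by iterating the renewal equation finitely many times and using boundedness of $z$ to estimate the remainder $z\ast \mu^{*N}$, whose total variation norm is at most $\|z\|_\infty p^N \to 0$, or by invoking uniqueness of the locally bounded solution together with the direct check that $z \ast U$ satisfies the renewal equation.
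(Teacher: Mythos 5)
Your argument is correct, and since the paper does not prove this proposition (it is quoted from \cite[p.~163]{asmussen2003applied}), there is nothing different to compare against: the observation that $U=\sum_{n\geq 0}\mu^{*n}$ has finite total mass $\sum_n p^n = (1-p)^{-1}$, combined with dominated convergence applied to the representation $Z_z=z*U$ already stated in the text, is exactly the standard proof of this defective-renewal result. One small slip in your final remark: iterating $Z_z = z + Z_z*\mu$ leaves the remainder $Z_z*\mu^{*N}$ (controlled by local boundedness of $Z_z$ and $\mu^{*N}([0,t])\leq p^N$), not $z*\mu^{*N}$; but this verification is superfluous here, since $Z_z$ is \emph{defined} in the text as the convolution of $z$ with the renewal measure, so your main dominated-convergence step applies directly.
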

The forward recurrence time $A_t$ of the renewal process $(S_n)_n$ at time $t$ is the timespan from $t$ up to the next renewal after $t$, i.e.
\begin{equation*}
	A_t \coloneqq  \inf \{S_n - t:\, n\in \N \text{ s.t. } S_n>t\}.
\end{equation*}
If $T_1$ has a density and $\mathbb E[T_1] < \infty$, one can apply the renewal theorem in order to show that the distribution of $A_t$ converges in total variation as $t\to \infty$. 
\\
\\
Let $\mathcal X$ be a Polish space which will serve as our state space. Let $\alpha>0$ and $\Xi$ be a probability distribution on $(0, \infty) \times \mathcal X$.
Let $(d_n)_n$ be an iid sequence of $\operatorname{Exp}(\alpha)$ distributed random variables and $((a_n, \chi_{n}))_n$ be an iid sequence of $\Xi$ distributed random variables, independent of $(d_n)_n$. We will always assume that $\Xi$ is such that $\mathbb E[a_1] <\infty$. Here $d_n$ should be interpreted as the $n$-th dormant period and $a_n$ as the $n$-th active period with state $\chi_{n}$. If, for example, we look at the process that represents the $M/G/\infty$-queue (started empty at time $0$) we choose $\mathcal X = \mathbf N_f(\triangle)$ and $\chi_{n} = \xi_n$ to be the process of customers arriving during the $n$-th busy period (shifted in time such that the first customer arrives at time 0). For $n\in \N_0$, we define
\begin{equation*}
X_{2n} \coloneqq \sum_{k=1}^{n} d_k + a_k, \quad \quad
X_{2n+1}\coloneqq d_{n+1} + \sum_{k=1}^{n} d_k + a_k,
\end{equation*}
consider the point process
\begin{equation*}
\eta \coloneqq \sum_{n=1}^{\infty} \delta_{(X_{2n-1}, X_{2n}, \chi_{n})}
\end{equation*}
on $\triangle \times \mathcal X$, and denote its distribution by $\Gamma^\Xi_\alpha$. The point process shall be interpreted as ``on the interval $[X_{2n-1}, X_{2n})$ the system is in state $\chi_{n}$ and on the interval $[X_{2n}, X_{2n+1})$ the system is dormant''. We call the renewal process $(S_n)_n \coloneqq (X_{2n})_n$ the embedded renewal process of $\eta$, see Figure \ref{fig:partitionrealaxis} for a visualization. For $n\in \N$ we denote by $T_n \coloneqq d_n + a_n$ the $n$-th interarrival time of embedded renewal process.
\begin{figure}
	\centering
	\includegraphics[width=0.8\linewidth]{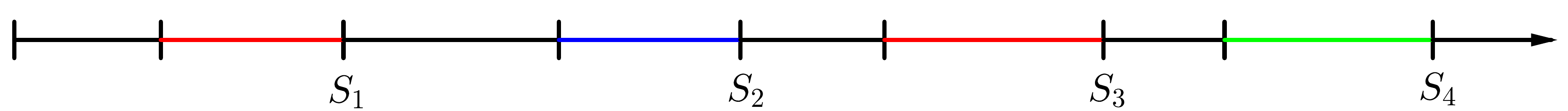}
	\caption{Example of a partition with statespace $\mathcal X = \{\textcolor{red}{\text{red}}, \textcolor{blue}{\text{blue}}, \textcolor{green}{\text{green}}\}$. Dormant periods are in black.}	
	\label{fig:partitionrealaxis}
\end{figure}
For $t>0$ let
\begin{equation*}
	\tau_t \coloneqq \inf \{n:\,S_n \geq t\}
\end{equation*}
be the hitting time of $[t, \infty)$ of the embedded renewal process of $\eta$. Notice that $\tau_t$ coincides with the number of renewal points in $[0, t)$ (counting the origin as a renewal point). The distribution of $T_1$ is absolutely continuous with respect to the Lebesgue measure as $a_1$ and $d_1\sim \operatorname{Exp}(\alpha)$ are independent. Since
\begin{equation*}
\mathbb P(\eta \text{ is dormant at }t) = \mathbb E\big[ \1_{\{T_1 \leq t\}}  \mathbb P(\eta \text{ is dormant at }t-T_1) \big] + \mathbb P(d_1 > t)
\end{equation*}
for all $t\geq 0$ the renewal theorem implies
\begin{equation*}
\lim_{t \to \infty} \mathbb P(\eta \text{ is dormant at }t) = \mathbb E[d_1]/\mathbb E[T_1].
\end{equation*}For $a\leq b$ let $M_{a, b} \coloneqq \{(s, t, x) \in \triangle \times \mathcal X: \, [s, t] \cap [a, b] \neq \emptyset\}$ and let
\begin{equation*}
R_a^b : \mathbf N(\triangle \times \mathcal X) \to \mathbf N(\triangle \times \mathcal X), \quad \mu \mapsto \mu(\, \cdot \, \cap M_{a, b})
\end{equation*}
be the restriction to all marked intervals that intersect the interval $[a, b]$.
Let us consider again the example of the process $\eta$ representing the $M/G/\infty$-queue. Then $R_a^b \eta$ contains not only all individuals that arrive or depart during $[a, b]$ but all clusters that intersect $[a, b]$. In a similar manner, we define $R_0^\infty$ to be the restriction to all marked intervals intersecting $[0, \infty)$.
For $t>0$ we define 
\begin{equation*}
\theta_t:\mathbf N(\triangle \times \mathcal X) \to \mathbf N(\triangle \times \mathcal X), \quad \sum_{i\in I} \delta_{(s_i, t_i, x_i)} \mapsto \sum_{i\in I} \delta_{(s_i-t, t_i-t, x_i)}
\end{equation*}
to be the translation by $-t$. We call a probability measure $\mathcal P$ on $(\mathbf N(\triangle \times \mathcal X), \mathcal N(\triangle \times \mathcal X))$ stationary, if $\mathcal P = \mathcal P \circ \theta_t^{-1}$ for all $t \in \R$. We call a point process on $\triangle \times \mathcal X$ stationary if its distribution is stationary.
By an application of renewal theory, we obtain the existence of a stationary version $\Gamma^{\Xi}_{\alpha, \operatorname{st}}$ of $\Gamma^\Xi_\alpha$.

\begin{prop}
	\label{convergence to stationarity}
	There exists a stationary point process $\eta^s$ on $\triangle\times \mathcal X$ such that the distribution of $R_0^\infty \theta_t \eta$ converges in total variation to the distribution of $R_0^\infty\eta^s$ as $t\to \infty$. Additionally, we have
	\begin{equation*}
	R_0^\infty\eta^s \overset{d}{=} \gamma + \theta_{-T_0} \eta
	\end{equation*}
	where $(T_0, \gamma)$ is a $(0, \infty)\times \mathbf N_f(\triangle \times \mathcal X)$ valued random variable independent of $\eta$ with 
	\begin{equation}
	\label{limit starting distribution}
		\mathbb P\big((T_0, \gamma) \in C\big) = \frac{1}{\mathbb E[T_1]} \int_{0}^\infty \mathbb P\big((T_1 - t, \theta_t R_t^{T_1} \eta) \in C\big) \, \mathrm dt
	\end{equation}
	for all $C\in \mathcal B((0, \infty)) \otimes \mathcal N_f(\triangle \times \mathcal X)$.
\end{prop}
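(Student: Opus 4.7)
The plan is to decompose $R_0^\infty\theta_t\eta$ using the strong renewal property and then apply the uniform form of the key renewal theorem to the ``current renewal period'' data, obtaining total variation convergence. First I set $\tau_t \coloneqq \inf\{n \geq 1 : S_n \geq t\}$, $B_t \coloneqq t - S_{\tau_t - 1}$ and $A_t \coloneqq S_{\tau_t} - t$. Marks of $\eta$ from renewal periods with index strictly less than $\tau_t$ have right endpoint $S_n < t$, so after translation by $-t$ they lie in $(-\infty, 0)$ and do not contribute to $R_0^\infty \theta_t \eta$. A direct bookkeeping of the shifts yields
\begin{equation*}
R_0^\infty \theta_t \eta \;=\; \delta_{(d_{\tau_t} - B_t,\, A_t,\, \chi_{\tau_t})} \;+\; \theta_{-A_t} \tilde \eta^{(t)},
\end{equation*}
where $\tilde \eta^{(t)}$ is built from $(d_{\tau_t + j}, a_{\tau_t + j}, \chi_{\tau_t + j})_{j \geq 1}$ in the same way that $\eta$ is built from $(d_j, a_j, \chi_j)_{j \geq 1}$. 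By the iid structure, $\tilde \eta^{(t)} \overset{d}{=} \eta$ and is independent of the tuple $(B_t, A_t, d_{\tau_t}, a_{\tau_t}, \chi_{\tau_t})$.

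Next I would analyse $Z_h(t) \coloneqq \mathbb E[h(B_t, A_t, d_{\tau_t}, a_{\tau_t}, \chi_{\tau_t})]$ for bounded measurable $h$ with $\|h\|_\infty \leq 1$. Conditioning on whether $T_1 \leq t$ yields the renewal equation
\begin{equation*}
Z_h(t) \;=\; \int_0^t Z_h(t - s) \, \mu_{T_1}(\dd s) + z_h(t), \qquad z_h(t) \coloneqq \mathbb E\bigl[h(t, T_1 - t, d_1, a_1, \chi_1)\,\1_{\{T_1 > t\}}\bigr].
\end{equation*}
Since $d_1 \sim \operatorname{Exp}(\alpha)$, the law $\mu_{T_1}$ of $T_1$ is absolutely continuous with respect to Lebesgue measure, and for $\|h\|_\infty \leq 1$ the $h$-independent bound $|z_h(t)| \leq g(t) \coloneqq \mathbb P(T_1 > t)$ is bounded, integrable (since $\mathbb E[T_1] < \infty$) and vanishes at infinity. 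Theorem \ref{renewal theorem} then yields
\begin{equation*}
\sup_{\|h\|_\infty \leq 1}\Bigl| Z_h(t) - \tfrac{1}{\mathbb E[T_1]} \int_0^\infty z_h(s) \, \dd s \Bigr| \xrightarrow[t \to \infty]{} 0,
\end{equation*}
which is exactly total variation convergence of the law of $(B_t, A_t, d_{\tau_t}, a_{\tau_t}, \chi_{\tau_t})$ to a limit law $\widehat{\mathcal L}$.

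Under $\widehat{\mathcal L}$, the pair $(A,\, \delta_{(d-B,\, A,\, \chi)})$ has exactly the law of $(T_0, \gamma)$ from \eqref{limit starting distribution}, by Fubini applied to the representation above together with the identity $\theta_s R_s^{T_1} \eta = \delta_{(d_1 - s,\, T_1 - s,\, \chi_1)}$ valid for $0 \leq s \leq T_1$. Combining the decomposition of step one with the independence of $\tilde \eta^{(t)}$ and the total variation convergence just obtained, I conclude
\begin{equation*}
R_0^\infty \theta_t \eta \xrightarrow[t \to \infty]{\mathrm{TV}} \gamma + \theta_{-T_0} \eta',
\end{equation*}
where $\eta'$ is an independent copy of $\eta$. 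A stationary $\eta^s$ on all of $\R$ is then constructed by pasting an independent iid renewal sequence to the right and another to the left of a size-biased central interval of law $(T_0, \gamma)$; translation invariance is built into this two-sided renewal construction, and by design $R_0^\infty \eta^s \overset{d}{=} \gamma + \theta_{-T_0} \eta$. The principal subtlety in the argument is the upgrade from weak to total variation convergence in the second step, which relies on the uniformity over $h$ in Theorem \ref{renewal theorem} and ultimately on the absolute continuity of $\mu_{T_1}$ inherited from $d_1 \sim \operatorname{Exp}(\alpha)$.
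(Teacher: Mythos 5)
Your proposal is correct and follows essentially the same route as the paper: both arguments condition on the renewal period straddling time $t$ (your tuple $(B_t,A_t,d_{\tau_t},a_{\tau_t},\chi_{\tau_t})$ carries the same information as the paper's $(A_t,\gamma_t)$), derive the same renewal equation with remainder bounded by the majorant $g(t)=\mathbb P(T_1>t)$, and invoke Theorem \ref{renewal theorem} to get total variation convergence to the law \eqref{limit starting distribution}, after which the mixture representation gives convergence of $R_0^\infty\theta_t\eta$. The only cosmetic difference is at the end, where you build the stationary version explicitly as a two-sided renewal process with a size-biased central interval, while the paper extends the consistent family of limit laws abstractly; both are stated at a comparable level of detail.
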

\begin{proof}
	First, we show convergence of the distribution of $R_0^\infty \theta_t \eta$ as $t\to \infty$. The proof follows the general idea: ``Condition with respect to a suitable chosen initial condition and show convergence of the distribution of the initial condition'', see e.g. \cite[p.224 ff.]{Daley2007} \\
	If $A_t$ is the forward recurrence time of the embedded renewal process at $t$ and $\gamma_t \coloneqq \theta_t R_t^{t+A_t} \eta$
	is the re-shifted marked interval ending in $t+A_t$
	then we have for all $B\in \mathcal N(\triangle \times \mathcal X)$
	\begin{equation*}
	\mathbb P(R_0^\infty \theta_t \eta \in B) = \int_{(0, \infty)\times \mathbf N_f(\triangle \times \mathcal X)} \mathbb P( \zeta + \theta_{-s}\eta \in B)\, \big( \mathbb P \circ (A_t, \gamma_t)^{-1} \big)(\mathrm ds\,  \mathrm d\zeta).
	\end{equation*}
	Now for all $C\in \mathcal B((0, \infty)) \otimes \mathcal N_f(\triangle \times \mathcal X)$ and $t\geq 0$
	\begin{equation*}
		\mathbb P( (A_t, \gamma_t) \in C) = \mathbb E\big[ \1_{\{T_1 \leq t\}} \mathbb P\big((A_{t-T_1}, \gamma_{t-T_1}) \in C\big)\big] + \mathbb P\big((T_1 - t, \theta_t R_t^{T_1} \eta) \in C\big).
	\end{equation*}
	By the renewal theorem (with the majorant $g(t) \coloneqq \mathbb P(T_1>t)$ for $t\geq 0$), the distribution of $(A_t, \gamma_t)$ converges in total variation to the distribution defined by the right hand side of \eqref{limit starting distribution} as $t\to \infty$. This implies the convergence of the distribution of $R_0^\infty \theta_t \eta$ in total variation as $t\to \infty$. By construction, the limit $\mathcal P$ satisifes $\mathcal P \circ (R_0^\infty \theta_t)^{-1} = \mathcal P$ for all $t>0$ and can be extended to a stationary distribution on $\mathbf N(\triangle \times \mathcal X)$.	
\end{proof}
We denote by $\Gamma_{\alpha, \operatorname{st}}^\Xi$ the distribution of $\eta^s$.
Below we will use $\Gamma_{\alpha, \operatorname{st}}^\Xi$ as a reference measure for certain Gibbs measures. For $t>0$ we denote by $\Gamma^\Xi_{\alpha, t}$ the distribution of $R_{-t}^t\theta_t \eta$ conditionally on $\{\eta \text{ is dormant at $2t$} \}$.
Notice that, by the memorylessness property of the exponential distribution, $\Gamma^\Xi_{\alpha, t}$ is also the distribution of the process $R_{-t}^t\eta^s$ conditionally on $\{\eta^s \text{ is dormant at $-t$ and $t$}\}$. Unsurprisingly, locally the effect of the boundary condition vanishes as $t\to \infty$.
	Given probability measures $(\mathcal P_t)_t$, $\mathcal P$ on $\big(\mathbf N(\triangle \times \mathcal X), \mathcal N(\triangle \times \mathcal X)\big)$ we say $\mathcal P_t \to \mathcal P$ {\em  locally in total variation} if, for any $a<b$, we have $\mathcal P_t \circ (R_a^b)^{-1} \to \mathcal P\circ (R_a^b)^{-1} $ in total variation as $t\to \infty$.	
\begin{prop}
	\label{Proposition: local convergence of alternating process}
	We have $\Gamma^{\Xi}_{\alpha, t} \to \Gamma_{\alpha, \operatorname{st}}^\Xi$ locally in total variation as $t \to \infty$.
\end{prop}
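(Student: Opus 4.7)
The plan is to reduce the statement to an asymptotic-independence argument between a local observable and the far conditioning event $\{\eta \text{ dormant at }2t\}$, using the i.i.d.\ regenerative structure of $\eta$. Fix $a<b$, and let $B \subseteq \mathbf N(\triangle\times\mathcal X)$ be $R_a^b$-measurable. For $t$ large enough that $[a,b]\subseteq [-t,t]$, the event $E_t := \{R_{-t}^t \theta_t \eta \in B\}$ depends only on the clusters of $\eta$ intersecting $[a+t,\, b+t]$, and
\begin{equation*}
	\Gamma^\Xi_{\alpha,t}(B) \;=\; \frac{\mathbb P(E_t,\; \eta \text{ dormant at }2t)}{\mathbb P(\eta \text{ dormant at }2t)}.
\end{equation*}
By the renewal theorem applied to $f(u) := \mathbb P(\eta \text{ dormant at }u)$, exactly as in the computation preceding Proposition~\ref{convergence to stationarity}, the denominator tends to $p := \mathbb E[d_1]/\mathbb E[T_1] > 0$.

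To handle the numerator, I would insert a regeneration time between $[a+t,\, b+t]$ and $2t$. Fix $C>0$, set $s_t := b+t+C$, and let $\rho_t := \min\{X_{2k}: X_{2k} > s_t\}$ be the first embedded-renewal point strictly after $s_t$. Any cluster of $\eta$ intersecting $[a+t,\, b+t] \subseteq [0, s_t]$ starts at some $X_{2k+1} \leq b+t < s_t$ and therefore ends at $X_{2k+2} \leq \rho_t$, so both $E_t$ and $\{\rho_t \leq 2t\}$ are measurable with respect to the natural filtration $\mathcal F_{\rho_t}$ of the sequence $(d_k,a_k,\chi_k)_{k\geq 1}$ stopped at $\rho_t$. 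Since the $(d_k,a_k,\chi_k)$ are i.i.d., conditional on $\mathcal F_{\rho_t}$ the shifted process $(\eta_{u+\rho_t})_{u \geq 0}$ is an independent copy of $\eta$, so on $\{\rho_t \leq 2t\}$ we have $\mathbb P(\eta \text{ dormant at }2t \mid \mathcal F_{\rho_t}) = f(2t-\rho_t)$, and hence
\begin{equation*}
	\mathbb P(E_t,\; \eta \text{ dormant at }2t,\; \rho_t \leq 2t) \;=\; \mathbb E\big[\1_{E_t\cap\{\rho_t \leq 2t\}}\, f(2t-\rho_t)\big].
\end{equation*}

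A second application of the renewal theorem gives $f(u)\to p$ as $u\to\infty$, and the forward recurrence time $\rho_t - s_t$ is tight (its law converges to the equilibrium forward-recurrence distribution), so $2t-\rho_t = t-b-C-(\rho_t-s_t) \to \infty$ in probability; with $0\leq f\leq 1$, bounded convergence yields $f(2t-\rho_t) \to p$ in $L^1$. Because $|\mathbb E[\1_{E_t}(f(2t-\rho_t)-p)]| \leq \mathbb E[|f(2t-\rho_t)-p|] \to 0$, the estimate $\mathbb E[\1_{E_t} f(2t-\rho_t)] = p\,\mathbb P(E_t) + o(1)$ is uniform in the choice of $B$, and combined with $\mathbb P(\rho_t > 2t) \to 0$ and $\mathbb P(\eta \text{ dormant at }2t) \to p$, dividing by the denominator yields $\Gamma^\Xi_{\alpha,t}(B) = \mathbb P(E_t) + o(1)$ uniformly in $B$. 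Finally, Proposition~\ref{convergence to stationarity} applied to the shifted process $\theta_{t+a}\eta$, combined with stationarity of $\eta^s$, gives $\mathbb P(E_t) \to \Gamma^\Xi_{\alpha,\operatorname{st}}(B)$ uniformly in $B$, which is the desired local total-variation convergence. The main technical point is precisely this uniformity when replacing $f(2t-\rho_t)$ by $p$; isolating the regeneration time $\rho_t$ at which the local and far observables decouple is exactly what reduces the uniformity to a plain application of bounded convergence, rather than requiring a quantitative mixing estimate.
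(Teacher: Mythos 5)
Your proposal is correct and follows essentially the same route as the paper's proof: you decouple the window $[a+t,b+t]$ from the dormancy constraint at $2t$ by conditioning on the first embedded renewal point just beyond the window, control the overshoot via convergence (tightness) of the forward recurrence time, use the renewal theorem to send $\mathbb P(\eta \text{ dormant at } u)$ to $\mathbb E[d_1]/\mathbb E[T_1]>0$, and finish with Proposition \ref{convergence to stationarity} together with stationarity of $\eta^s$. The differences are only cosmetic: a fixed buffer $C$ with an $L^1$/bounded-convergence argument and a ratio normalization, instead of the paper's $\sqrt{t}$ buffer and explicit $\varepsilon$-bookkeeping on the difference of conditional and unconditional probabilities.
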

\begin{proof}
	Let $a<b$. To shorten notation, we denote for $t>0$ by $\sigma_t \coloneqq \tau_{t+b}$ the hitting time of $[t+b, \infty)$ of the embedded renewal process of $\eta$ and set
	\begin{equation*}
	N_t(\eta) \coloneqq \begin{cases}
	0  \quad &\text{if $\eta$ is dormant at $t$} \\
	1  \quad &\text{if $\eta$ is active at $t$}.
	\end{cases}
	\end{equation*}
	As a consequence of the renewal theorem, we have
	\begin{equation}
	\label{convergence probability dormant}
	\mathbb P(N_t(\eta) = 0) \to \mathbb E[d_1]/\mathbb E[T_1] \eqqcolon c
	\end{equation}
	as $t\to \infty$. Let $\varepsilon>0$ and let $t_1 >0$ be such that for all $t \geq t_1$
	\begin{equation}
	\label{Twoestimates}
	\mathbb P(N_{2t}(\eta) = 0) \geq c/2, \quad |\mathbb P(N_{2t}(\eta) = 0) - c| \leq \varepsilon.
	\end{equation} 
	For $t\geq 0$ let $s_t \coloneqq b + t + \sqrt{t}$.
	Let $t_2 \geq 0$ be such that for all $t\geq t_2$ we have $ t -b  - \sqrt{t}\geq t_1$. For $t\geq t_2$ we then have on $\{S_{\sigma_t} \leq s_t\}$ that $2t - S_{\sigma_t} \geq t_1$ holds. By conditioning with respect to the process up the first renewal after $b+t$, we get for all $A\in \mathcal N(\triangle \times \mathcal X)$ and $t\geq t_2$
	\begin{align*}
	&\big|\mathbb P(R_a^b \theta_{t} \eta\in A, S_{\sigma_t} \leq s_t , N_{2t}(\eta) = 0) - \mathbb P(R_a^b \theta_{t} \eta\in A, S_{\sigma_t} \leq s_t) \mathbb P(N_{2t}(\eta) = 0)\big| \\
	&\leq \int_\Omega \mathbb P( \mathrm d\omega)\,\Big[\mathds 1_{\{S_{\sigma_t} \leq s_t\}}(\omega) \big|\mathbb P(N_{2t-S_{\sigma_t}(\omega)}(\eta) = 0) - \mathbb P(N_{2t}(\eta) = 0)\big| \,\Big] \\
	&\leq 2 \varepsilon.
	\end{align*}
	On $\{S_{\sigma_t} > s_t\}$ the forward recurrence time at $t+b$ is at least $\sqrt{t}$. By convergence of the distribution of the forward recurrence time there exists a $t_3\geq t_2$ such that for all $t\geq t_3$
	\begin{equation*}
	\mathbb P(S_{\sigma_t} > s_t) \leq \varepsilon.
	\end{equation*}
	We then have for all $t \geq t_3$ 
	\begin{align*}
	&\big|\mathbb P(R_a^b \theta_{t} \eta \in A, N_{2t}(\eta) = 0) - \mathbb P(R_a^b \theta_{t} \eta \in A) \mathbb P(N_{2t}( \eta) = 0)\big| \\
	&\leq \mathbb P(R_a^b \theta_{t} \eta \in A, S_{\sigma_t} > s_t, N_{2t}(\eta) = 0) \\
	&+ |\mathbb P(R_a^b \theta_{t} \eta \in A, S_{\sigma_t} \leq s_t, N_{2t}(\eta) = 0) - \mathbb P(R_a^b \theta_{t} \eta \in A, S_{\sigma_t} \leq s_t) \cdot \mathbb P(N_{2t}(\eta) = 0)| \\
	&+ \mathbb P(R_a^b \theta_{t} \eta \in A, S_{\sigma_t} > s_t) \cdot \mathbb P(N_{2t}(\eta) = 0) \\
	& \leq 4 \varepsilon
	\end{align*}
	and hence
	\begin{equation*}
	|\mathbb P(R_a^b \theta_{t} \eta \in A|N_{2t}(\eta) = 0) - \mathbb P(R_a^b \theta_{t} \eta \in A)| \leq \frac{4 \varepsilon}{\mathbb P(N_{2t}(\eta) = 0)} \leq \frac{4 \varepsilon}{c/2}.
	\end{equation*}
	Combining this with Proposition \ref{convergence to stationarity} yields the claim.	
\end{proof}

We will now consider Gibbs measures with respect to the stationary distribution $\Gamma_{\alpha, \text{st}}^\Xi$ under dormant boundary conditions. We will assume that dormant intervals have no energy contribution and that the Hamiltonian is additive in the active states. Let $E: \mathcal X \to \R \cup \{-\infty\}$ be measurable. We define $\mathcal H: \mathbf N_f(\triangle \times \mathcal X) \to \R \cup \{-\infty\}$ by
\begin{equation*}
\mathcal H\Big( \sum_{k=1}^n \delta_{(s_k, t_k, x_k)}\Big) \coloneqq \sum_{k=1}^n E(x_k)\quad \text{for }  \sum_{k=1}^n \delta_{(s_k, t_k, x_k)} \in \mathbf N_f(\triangle \times \mathcal X).
\end{equation*}
In the following, we set $e^{\infty} \coloneqq \infty \eqqcolon \log(\infty)$.
\begin{prop}
	\label{Proposition: Partition function locally bounded}
	Assume that there exists a $\mu \in \R$ such that $\mathbb E[e^{-\mu T_1 - E(\chi_1)}] < \infty$. Then
	\begin{equation*}
		t \mapsto \mathbf Z_{2t} \coloneqq \int_{\mathbf N_f(\triangle \times \mathcal X)} e^{-\mathcal H(\zeta)}  \, \Gamma^{\Xi}_{\alpha, t}(\mathrm d\zeta)
	\end{equation*}
	is locally bounded, more precisely there exists a $\lambda \in \R$ and a $C>0$ such that $\mathbf Z_t \leq C e^{\lambda t}$ for all $t\geq 0$.
\end{prop}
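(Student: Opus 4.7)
My plan is to compute $\mathbf Z_{2t}$ explicitly via the alternating renewal structure and then bound the resulting sum by a geometric series. The integrability hypothesis provides just enough freedom in an exponential tilt parameter to force the geometric ratio below one.

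First I decompose according to the number $N = \max\{n\colon S_n\leq 2t\}$ of completed cycles by time $2t$. On the event $\{\eta\text{ is dormant at }2t\}$ one has $\mathcal H(R_{-t}^t\theta_t\eta) = \sum_{k=1}^N E(\chi_k)$, and conditioning on the first $n$ cycles and integrating out $d_{n+1}\sim\operatorname{Exp}(\alpha)$ yields
\begin{equation*}
\mathbf Z_{2t}\cdot \mathbb P(\eta\text{ dormant at }2t)
= \sum_{n=0}^\infty \mathbb E\Big[\1_{\{S_n\leq 2t\}}\, e^{-\alpha(2t-S_n)}
\prod_{k=1}^n e^{-E(\chi_k)}\Big].
\end{equation*}
For any $\beta\geq 0$ the bound $\1_{\{S_n\leq 2t\}}\leq e^{\beta(2t-S_n)}$ combined with $e^{\beta S_n}e^{-\alpha(2t-S_n)} = e^{-2(\alpha-\beta)t}\prod_k e^{(\alpha-\beta)T_k}$ and the iid structure of the cycles reduces the right-hand side to $e^{2(\beta-\alpha)t}\sum_{n\geq 0}q_\beta^n$ with $q_\beta := \mathbb E[e^{(\alpha-\beta)T_1 - E(\chi_1)}]$. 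Using the crude lower bound $\mathbb P(\eta\text{ dormant at }2t)\geq \mathbb P(d_1>2t) = e^{-2\alpha t}$ (the event $\{d_1>2t\}$ sits inside $\{\eta\text{ dormant at }2t\}$), this gives $\mathbf Z_{2t}\leq (1-q_\beta)^{-1}e^{2\beta t}$ whenever $q_\beta<1$, which is the desired shape of bound with $\lambda = \beta$ and $C = (1-q_\beta)^{-1}$.

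It remains to choose $\beta$ with $q_\beta<1$. Independence of $d_1$ from $(a_1,\chi_1)$ gives, for $\beta>0$,
\begin{equation*}
q_\beta = \mathbb E[e^{(\alpha-\beta)d_1}]\cdot \mathbb E[e^{(\alpha-\beta)a_1 - E(\chi_1)}] = \tfrac{\alpha}{\beta}\,\mathbb E[e^{(\alpha-\beta)a_1 - E(\chi_1)}].
\end{equation*}
The hypothesis forces $\mu>-\alpha$ (else $\mathbb E[e^{-\mu d_1}]=\infty$ and the product cannot be finite), so $\beta_0 := \alpha+\mu>0$, and the same factorisation shows $\mathbb E[e^{(\alpha-\beta_0)a_1 - E(\chi_1)}]<\infty$. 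For $\beta\geq \beta_0$ the integrand is monotone decreasing in $\beta$, dominated by its value at $\beta_0$, and tends pointwise to $0$ as $\beta\to\infty$ because $a_1>0$ almost surely; dominated convergence therefore gives $\mathbb E[e^{(\alpha-\beta)a_1 - E(\chi_1)}]\to 0$, hence $q_\beta\to 0$. Any sufficiently large $\beta$ now produces the claimed bound.

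The only genuinely delicate point is this last one: extracting enough decay from the tilt parameter to beat the geometric series. Everything else is routine manipulation of the memoryless and iid structure of the alternating process, together with the elementary lower bound on $\mathbb P(\eta\text{ dormant at }2t)$ coming from the first dormant period alone.
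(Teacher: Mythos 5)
Your argument is correct, and it reaches the bound by a genuinely different route than the paper. The paper chooses $\lambda$ with $c:=\mathbb E[e^{-\lambda T_1-E(\chi_1)}]\leq 1$, notes that $\prod_{i=1}^n e^{-\lambda T_i-E(\chi_i)}$ is then a supermartingale, and bounds the product up to the random index $\tau_t$ via optional stopping and Fatou, extracting $e^{-\lambda t}$ from the identity $T_1+\cdots+T_{\tau_t}=t+A_t$ together with memorylessness; the division by $\mathbb P(\eta\text{ is dormant at }t)$ is then handled by the renewal-theorem fact that this probability tends to $\mathbb E[d_1]/\mathbb E[T_1]>0$, which relies on the standing assumption $\mathbb E[a_1]<\infty$. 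You instead expand $\mathbf Z_{2t}\,\mathbb P(\eta\text{ is dormant at }2t)$ over the number of completed cycles, dominate the indicator by $e^{\beta(2t-S_n)}$, and sum a geometric series with ratio $q_\beta=\mathbb E[e^{(\alpha-\beta)T_1-E(\chi_1)}]$, made strictly smaller than $1$ by taking $\beta$ large; the crude bound $\mathbb P(\eta\text{ is dormant at }2t)\geq e^{-2\alpha t}$ is then absorbed into the exponent. This is more elementary --- no optional stopping, no renewal theorem, and in fact no use of $\mathbb E[a_1]<\infty$ --- at the price of requiring the strict inequality $q_\beta<1$ and hence typically a larger exponent $\lambda=\beta$ than the paper obtains; since the proposition only asserts existence of some $\lambda$ and $C$, this loss is immaterial. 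Two small points you could make explicit: deducing $\mu>-\alpha$ and applying dominated convergence both use that $e^{-\mu a_1-E(\chi_1)}$ is strictly positive (because $E$ never takes the value $+\infty$) and a.s.\ finite (by the hypothesis); and the dominated convergence step can be skipped entirely, since monotonicity already gives $q_\beta\leq\frac{\alpha}{\beta}\,\mathbb E[e^{-\mu a_1-E(\chi_1)}]\to 0$ as $\beta\to\infty$.
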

\begin{proof}
	By the dominated convergence theorem, there exists a $\lambda\in \R$ such that
	\begin{equation*}
		c \coloneqq \mathbb E[e^{-\lambda T_1 - E(\chi_1)}] \leq 1.
	\end{equation*}
	If $A_t$ denotes for $t\geq0$ the forward recurrence time of the embedded renewal process of $\eta$ at time $t$ we have
	$T_1 + \hdots +T_{\tau_t} = t + A_t$. 
	By the memorylessness property of the exponential distribution, $(A_t, \chi_{\tau_t})$ is independent of $R_0^t \eta$ conditionally on $\{\eta \text{ is dormant at }t\}$ and
	we have for all $t\geq 0$
	\begin{align*}
		\mathbf Z_{t}e^{- \lambda t} &= \frac{1}{c\cdot \mathbb P(\eta \text{ is dormant at }t)} \mathbb E\Big[\prod_{i=1}^{\tau_t} e^{-\lambda T_i - E(\chi_i)} \1_{\{\eta \text{ is dormant at }t\}} \Big] \\
		&\leq \frac{1}{c \cdot \mathbb P(\eta \text{ is dormant at }t)} \mathbb E\Big[\prod_{i=1}^{\tau_t} e^{-\lambda T_i - E(\chi_i)} \Big].
	\end{align*}
	As $(\prod_{i=1}^{n} e^{-\lambda T_i - E(\chi_i)})_{n\geq 1}$ is a supermartingale with respect to the filtration generated by $((d_n, a_n, \chi_n))_{n \geq 1}$ we get by an application of the optional stopping theorem and Fatou's lemma
	\begin{equation*}
		 \mathbb E\Big[\prod_{i=1}^{\tau_t} e^{-\lambda T_i - E(\chi_i)} \Big] \leq \liminf_{n \to \infty} \mathbb E\Big[\prod_{i=1}^{\tau_t\wedge n} e^{-\lambda T_i - E(\chi_i)} \Big]  \leq 1.
	\end{equation*}
	 The claim follows as $\lim_{t\to \infty}\mathbb P(\eta \text{ is dormant at }t) >0$.
\end{proof}

From now on, we will always assume that $t \mapsto \mathbf Z_{t}$
is locally bounded. For $t>0$ we define the probability measure $\widehat{\Gamma}^{\Xi}_{\alpha, t}$ by
\begin{equation*}
\widehat{\Gamma}^{\Xi}_{\alpha, t}(\mathrm d \zeta) = \frac{1}{\mathbf Z_{2t}}e^{-\mathcal H(\zeta)}\, \Gamma^{\Xi}_{\alpha, t}(\mathrm d \zeta).
\end{equation*}
To make contact with the mixing measure in the representation introduced in Section \ref{Section: Outline}, consider $\mathcal X = \mathbf N_f(\triangle)$ and $(d_n)_n$, $(a_n)_n$  to be the sequences of dormant and active periods of the $M/G/\infty$ queue and $(\chi_{n})_n = (\xi_n)_n$ to be the sequence of clusters (i.e. the reshifted processes of customers arriving during the respective active periods) and $E = -\log(F)$ with
\begin{equation*}
	F\Big(\sum_{i=1}^n \delta_{(s_i, t_i)}\Big) \coloneqq \mathbb E_\mathcal W\Big[\prod_{i=1}^{n}v(t_i - s_i, X_{s_i, t_i})\Big].
\end{equation*}

We give a condition under which the reweighted measure $\widehat{\Gamma}^{\Xi}_{\alpha, t}$ can be obtained by simply reweighting the distributions of the dormant periods $(d_k)_k$ and active periods and states $(a_k, \chi_{k})_k$. If the reweighted active periods have finite expectation, this will in particular imply local convergence in total variation of the measures $\widehat{\Gamma}^{\Xi}_{\alpha, t}$ as $t\to \infty$.
\begin{prop}
	\label{Proposition: Tilting measures}
	Assume $\lambda \in \R$ satisfies
	\begin{equation*}
	\mathbb E\big[e^{-\lambda T_1 - E(\chi_1)}\big] = 1.
	\end{equation*}
	Then 
	\begin{equation*}
	\widehat{\Gamma}^{\Xi}_{\alpha, t} = \Gamma^{\widehat{\Xi}}_{\alpha + \lambda, t}  \quad \text{ where } \quad \widehat{\Xi}(\mathrm d a \, \mathrm d \chi) \coloneqq \frac{\alpha}{\alpha + \lambda} e^{-\lambda a - E(\chi)} \, \Xi(\mathrm d a \, \mathrm d \chi)
	\end{equation*}
	for all $t>0$. Additionally, 
	\begin{equation*}
		\mathbf Z_{t} = e^{\lambda t} \cdot \frac{\Gamma_{\alpha}^{\widehat{\Xi}}(\text{the system is dormant at $t$})}{\Gamma_{\alpha}^\Xi(\text{the system is dormant at $t$})}.
	\end{equation*}
\end{prop}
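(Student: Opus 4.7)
The plan is to verify both identities at the level of unnormalized densities, stratified by the number $N$ of completed clusters in $[-t,t]$. By stationarity of $\eta$ and memorylessness of the $\operatorname{Exp}(\alpha)$ dormant periods, $\Gamma^{\Xi}_{\alpha,t}$ can equivalently be described as the law of $R_0^{2t}\eta$ conditioned on $\{\eta \text{ dormant at } 2t\}$. In the $N$-cluster sector I parameterize a configuration by $(d_0,(a_1,\chi_1),d_1,\ldots,d_{N-1},(a_N,\chi_N))$ together with the tail event $\{d_N>2t-S_N\}$, where $S_N\coloneqq \sum_{i=0}^{N-1} d_i+\sum_{k=1}^N a_k$ and $d_N\sim\operatorname{Exp}(\alpha)$ is independent of the rest. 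Integrating out $d_N$ contributes the factor $e^{-\alpha(2t-S_N)}$, and multiplication by $e^{-\mathcal H}=\prod_k e^{-E(\chi_k)}$ then yields, for the $N$-sector of the unnormalized numerator defining $\widehat\Gamma^{\Xi}_{\alpha,t}$, the density
\begin{equation*}
\prod_{i=0}^{N-1}\alpha e^{-\alpha d_i}\,dd_i\cdot\prod_{k=1}^N e^{-E(\chi_k)}\,\Xi(da_k,d\chi_k)\cdot e^{-\alpha(2t-S_N)}\mathds 1_{\{S_N<2t\}}.
\end{equation*}

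The key algebraic step is to use the tautology $2t=\sum_{i=0}^{N-1} d_i+\sum_{k=1}^N a_k+(2t-S_N)$ and insert $1=e^{-\lambda\cdot 2t}e^{\lambda\cdot 2t}$, distributing $e^{-\lambda\cdot 2t}$ across the three summands. Each internal dormant factor then becomes $\alpha e^{-\alpha d_i}e^{-\lambda d_i}=\tfrac{\alpha}{\alpha+\lambda}(\alpha+\lambda)e^{-(\alpha+\lambda)d_i}$; the tail becomes $e^{-(\alpha+\lambda)(2t-S_N)}$; and each cluster measure becomes $e^{-\lambda a_k-E(\chi_k)}\,\Xi(da_k,d\chi_k)=\tfrac{\alpha+\lambda}{\alpha}\widehat\Xi(da_k,d\chi_k)$. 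The $N$ prefactors $\tfrac{\alpha}{\alpha+\lambda}$ and $\tfrac{\alpha+\lambda}{\alpha}$ cancel exactly, so the $N$-sector density of $\widehat\Gamma^{\Xi}_{\alpha,t}$ equals $e^{\lambda\cdot 2t}$ times that of $\Gamma^{\widehat\Xi}_{\alpha+\lambda,t}$. That $\widehat\Xi$ is a probability measure enters precisely here: independence of $d_1$ and $(a_1,\chi_1)$ together with the hypothesis gives $1=\mathbb E[e^{-\lambda T_1-E(\chi_1)}]=\mathbb E[e^{-\lambda d_1}]\mathbb E[e^{-\lambda a_1-E(\chi_1)}]=\tfrac{\alpha}{\alpha+\lambda}\mathbb E[e^{-\lambda a_1-E(\chi_1)}]$, forcing $\mathbb E[e^{-\lambda a_1-E(\chi_1)}]=\tfrac{\alpha+\lambda}{\alpha}$, which is exactly the normalization factor absorbed into $\widehat\Xi$. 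Note also that finiteness of $\mathbb E[e^{-\lambda d_1}]$ forces $\alpha+\lambda>0$, so the reweighted dormant law $\operatorname{Exp}(\alpha+\lambda)$ is well-defined.

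Summing the $N$-sector identities and normalizing then yields the measure equality $\widehat\Gamma^{\Xi}_{\alpha,t}=\Gamma^{\widehat\Xi}_{\alpha+\lambda,t}$, while comparing total unnormalized masses on $\{\text{dormant at } 2t\}$ gives $\mathbf Z_{2t}\,\Gamma^{\Xi}_\alpha(\text{dormant at }2t)=e^{2\lambda t}\,\Gamma^{\widehat\Xi}_{\alpha+\lambda}(\text{dormant at }2t)$, which is the asserted partition-function formula after relabeling $2t\mapsto t$. I do not expect any real obstacle: the proof is essentially a careful re-grouping of exponentials, the only book-keeping subtlety being the symmetric treatment of the two boundary dormant intervals, which is handled uniformly by memorylessness.
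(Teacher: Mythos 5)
Your proof is correct and is essentially the paper's own argument: the paper performs the same decomposition according to the number of completed dormant/active cycles in $[0,2t]$, the same telescoping of $e^{-2\lambda t}$ over the internal dormant periods, the clusters, and the overshoot at the right endpoint, and the same memorylessness/exponential-tail step for the truncated final dormant period, only phrased as a Radon--Nikodym density for the vector of the first $n$ cycles rather than as an explicit rearrangement of densities. One cosmetic point: no stationarity of $\eta$ is needed (or available) to pass from $R_{-t}^{t}\theta_t\eta$ to $R_0^{2t}\eta$ -- this is just the deterministic shift $\theta_t$ -- and only the dormant interval at the right boundary is truncated, the process starting dormant at the left endpoint by construction.
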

\begin{proof}
	First, notice that $\widehat \Xi$ indeed defines a probability measure as
	\begin{equation*}
		1 = \mathbb E\big[e^{-\lambda T_1 -E(\chi_1)}\big] = \mathbb E\big[e^{-\lambda d_1}\big] \mathbb E\big[e^{-\lambda a_1 -E(\chi_1)}\big] = \frac{\alpha}{\alpha + \lambda} \mathbb E\big[e^{-\lambda a_1 -E(\chi_1)}\big].
	\end{equation*}
	Let $(\hat d_n)_n$ be an iid sequence of $\operatorname{Exp}(\alpha + \lambda)$ distributed random variables. Let $((\hat a_n, \hat \chi_{n}))_n$ be an iid sequence of $\widehat{\Xi}$ distributed random variables, independent of $(\hat d_n)_n$, and $\hat \eta$ be the process constructed in the same manner as in the beginning of the section. As usual, we denote for $t\in \R$ by $\hat \tau_t$ and $\tau_t$ the hitting times of $[t, \infty)$ of the embedded renewal processes of $\hat \eta$ and $\eta$ respectively. For $n\in \N_0$, define $\delta_n \coloneqq 2t - S_n$ and $\hat \delta_n \coloneqq 2t - \hat S_n$. Then
	\begin{equation*}
	\{\tau_{2t} = n+1,\, \eta \text{ is dormant at $2t$}\} = \{S_n\leq 2t, d_{n+1} > \delta_n\}.
	\end{equation*}
	The distribution of $\hat Y = (\hat d_1, \hdots, \hat d_{n+1}, \hat a_1, \hat \chi_1, \hdots, \hat a_n, \hat \chi_n)$ has a density with respect to the distribution of $Y = (d_1, \hdots, d_{n+1}, a_1, \chi_1, \hdots, a_n, \chi_n)$ given by 
	\begin{equation*}
		(\tilde d_1, \hdots, \tilde d_{n+1}, \tilde a_1, \tilde \chi_1, \hdots, \tilde a_n, \tilde \chi_n) \mapsto \frac{\alpha + \lambda}{\alpha} e^{-\lambda  \tilde d_{n+1}} \prod_{i=1}^n e^{-\lambda (\tilde d_i + \tilde a_i)}e^{-E(\tilde \chi_i)}.
	\end{equation*}
	For measurable $f:\mathbf N_f(\triangle \times \mathcal X) \to [0, \infty)$ we have that $f(R_{-t}^t \theta_t \hat\eta)\1_{ \{\hat S_n\leq 2t, \hat d_{n+1} > \delta_n\}} = h(\hat Y)$ for a suitable measurable function $h$. By writing the following expected value as an integral with respect to the image measure of $\hat Y$ and using that
	\begin{equation*}
	e^{-\lambda(T_1 + \hdots  + T_n + d_{n+1}) } = e^{-2\lambda t - \lambda (d_{n+1} - \delta_n)}
	\end{equation*}	
	one obtains
	\begin{align*}
	&\mathbb E\big[f(R_{-t}^t \theta_t \hat\eta)\1_{ \{\hat S_n\leq 2t,\,\hat d_{n+1} > \hat \delta_n\}}\big]\\
	&= \tfrac{\alpha + \lambda}{\alpha} e^{-2\lambda t} \mathbb E\big[f(R_{-t}^t \theta_t \eta)\1_{ \{S_n\leq 2t, d_{n+1} > \delta_n\}} e^{-\sum_{i=1}^{n}E(\chi_{i})}  e^{- \lambda (d_{n+1} - \delta_n)}\big].
	\end{align*}
	By the memorylessness property of the exponential distribution, we have for all $t\geq 0$
	\begin{equation*}
		\mathbb E\big[e^{-\lambda(d_{n+1} - t)} \1_{\{d_{n+1} > t\}}\big] = \frac{\alpha}{\alpha + \lambda} \mathbb E\big[\1_{\{d_{n+1} > t\}}\big].
	\end{equation*}
	By conditioning with respect to $(d_1, \hdots, d_n, a_1, \chi_1, \hdots, a_n, \chi_n)$ we hence obtain
	\begin{equation*}
		\mathbb E\big[f(R_{-t}^t \theta_t \hat\eta)\1_{ \{\hat S_n\leq 2t,\,\hat d_{n+1} > \hat \delta_n\}}\big] = e^{-2\lambda t} \mathbb E\big[f(R_{-t}^t \theta_t \eta)\1_{ \{S_n\leq 2t, d_{n+1} > \delta_n\}} e^{-\mathcal H(R_0^{2t} \eta)}\big].
	\end{equation*}
	By summing over all $n\in \N_0$ we get 
	\begin{equation*}
		\mathbb E\big[f(R_{-t}^t \theta_t \hat\eta)\1_{ \{\hat \eta \text{ is dormant at }2t  \} }\big] = e^{-2\lambda t} \mathbb E\big[f(R_{-t}^t \theta_t \eta)\1_{ \{ \eta \text{ is dormant at }2t  \}} e^{-\mathcal H(R_0^{2t} \eta)}\big]
	\end{equation*}	
	and hence
	\begin{equation*}
		\int_{\mathbf N_f(\triangle \times \mathcal X)} f(\zeta) \, \Gamma_{\alpha + \lambda, t}^{\widehat \Xi}(\mathrm d \zeta) = e^{-2 \lambda t} \frac{\mathbb P(\eta \text{ is dormant at }2t)}{\mathbb P(\hat \eta \text{ is dormant at }2t)} \int_{\mathbf N_f(\triangle \times \mathcal X)} f(\zeta) e^{-\mathcal H(\zeta)} \, \Gamma^{\Xi}_{\alpha, t}(\mathrm d \zeta)
	\end{equation*}
	which yields the claims.	
\end{proof}

As a corollary of Proposition \ref{Proposition: Tilting measures} and Proposition \ref{Proposition: local convergence of alternating process} we obtain our first main result, namely a sufficient condition for the existence of an infinite volume measure for the Gibbs measures $\widehat \Gamma^\Xi_{\alpha, T}$. This implies the existence of the infinite volume measure for our path measures $\mathbb P_{\alpha, T}$. 
\begin{cor}
	\label{Corollary: Convergence of GIbbs measures if we are good}
	Assume $\lambda \in \R$ satisfies
	\begin{equation}
	\label{goodness}
	\mathbb E\big[e^{-\lambda T_1 - E(\chi_1)}\big] = 1 \quad \text{ and } \quad \mathbb E\big[T_1 e^{-\lambda T_1 - E(\chi_1)}\big] < \infty. \tag{G'}
	\end{equation}
	Then $\widehat{\Gamma}^{\Xi}_{\alpha, t} \to \Gamma_{\alpha+\lambda, \operatorname{st}}^{\widehat{\Xi}} \eqqcolon \widehat \Gamma_{\alpha, \operatorname{st}}^\Xi$ locally in total variation.
\end{cor}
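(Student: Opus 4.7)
The plan is to chain Proposition \ref{Proposition: Tilting measures} with Proposition \ref{Proposition: local convergence of alternating process}, so that the corollary emerges as a packaging of already-established machinery. First I would invoke the tilting identity from Proposition \ref{Proposition: Tilting measures}, which uses only the normalization condition $\mathbb E[e^{-\lambda T_1 - E(\chi_1)}] = 1$, to rewrite
\[
\widehat{\Gamma}^{\Xi}_{\alpha, t} = \Gamma^{\widehat{\Xi}}_{\alpha+\lambda, t}.
\]
This reduces the claim to proving local total-variation convergence of the \emph{un-reweighted} alternating process with dormant rate $\alpha+\lambda$ and active law $\widehat{\Xi}$ to its stationary version.

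Next, I would verify that the tilted system satisfies the standing assumption required by Proposition \ref{Proposition: local convergence of alternating process}, namely that the expected tilted active period is finite. Unfolding the definition of $\widehat{\Xi}$ gives
\[
\widehat{\mathbb E}[\hat a_1] = \tfrac{\alpha}{\alpha+\lambda}\, \mathbb E\big[a_1 e^{-\lambda a_1 - E(\chi_1)}\big].
\]
Note that the identity $\mathbb E[e^{-\lambda T_1 - E(\chi_1)}]=1$ forces $\mathbb E[e^{-\lambda d_1}]$ to be finite, hence $\alpha+\lambda>0$, so $\widehat{\Xi}$ is well defined. Using the factorization $T_1 = d_1 + a_1$ and independence of $d_1$ and $(a_1,\chi_1)$ under the reference measure, together with $\mathbb E[e^{-\lambda d_1}] = \alpha/(\alpha+\lambda)>0$, the second half of \eqref{goodness} gives
\[
\mathbb E\big[a_1 e^{-\lambda a_1 - E(\chi_1)}\big] \leq \tfrac{\alpha+\lambda}{\alpha}\, \mathbb E\big[T_1 e^{-\lambda T_1 - E(\chi_1)}\big] < \infty,
\]
so $\widehat{\mathbb E}[\hat a_1]<\infty$.

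With this verification in hand, Proposition \ref{Proposition: local convergence of alternating process} applied to the tilted system (with parameters $\alpha+\lambda$ and $\widehat{\Xi}$) yields $\Gamma^{\widehat{\Xi}}_{\alpha+\lambda, t} \to \Gamma^{\widehat{\Xi}}_{\alpha+\lambda, \operatorname{st}}$ locally in total variation as $t\to\infty$, and combining this with the tilting identity produces the stated convergence to $\widehat \Gamma_{\alpha, \operatorname{st}}^\Xi := \Gamma^{\widehat{\Xi}}_{\alpha+\lambda, \operatorname{st}}$. I do not foresee any serious obstacle: all of the analytic content (the change of reference measure under Gibbs reweighting, and the renewal-theoretic convergence to stationarity under a dormant boundary condition) is already encapsulated in the two cited propositions, and the only slightly delicate step is the short estimate above that transforms the integrability condition on $T_1$ into an integrability condition on $a_1$.
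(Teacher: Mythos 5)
Your proposal is correct and is exactly the argument the paper intends: the corollary is stated as an immediate consequence of Proposition \ref{Proposition: Tilting measures} (identifying $\widehat{\Gamma}^{\Xi}_{\alpha,t}$ with $\Gamma^{\widehat{\Xi}}_{\alpha+\lambda,t}$) and Proposition \ref{Proposition: local convergence of alternating process}, with the second condition in \eqref{goodness} serving precisely to guarantee the finite mean of the tilted active period. Your explicit verification that $\alpha+\lambda>0$ and that $\widehat{\mathbb E}[\hat a_1]\leq \mathbb E[T_1 e^{-\lambda T_1 - E(\chi_1)}]<\infty$ is a correct filling-in of the detail the paper leaves implicit.
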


Note that Condition \eqref{goodness} specializes to \eqref{goodness of alpha} in the context of Section 
\ref{Section: Outline}.
 We will now show that the existence of a real number $\lambda$ satisfying \eqref{goodness} is equivalent to several natural conditions, including exponential growth of the partition function with $t$. 

\begin{prop}
	\label{Existence ground state energy}
	The limit $\varphi \coloneqq \lim_{t \to \infty} \log(\mathbf Z_t)/t$ exists in $\R \cup \{\infty\}$.
\end{prop}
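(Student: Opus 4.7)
The plan is to show that $t\mapsto \log \mathbf{Z}_t$ is superadditive up to a vanishing correction, and then invoke the measurable version of Fekete's lemma. Write $q(t):=\mathbb{P}(\eta\text{ is dormant at }t)$; we have $q(t)\geq \mathbb{P}(d_1>t)=e^{-\alpha t}>0$ for every $t>0$, and, as already recorded above, $q(t)\to \mathbb{E}[d_1]/\mathbb{E}[T_1]>0$ as $t\to\infty$.

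The key step is the inequality
\begin{equation*}
\mathbf{Z}_{s+t}\,q(s+t)\;\geq\;\mathbf{Z}_s\,q(s)\cdot \mathbf{Z}_t\,q(t)\qquad\text{for all } s,t>0.
\end{equation*}
Unwinding the definitions of $\Gamma^{\Xi}_{\alpha,u/2}$ and $\mathbf{Z}_u$ yields $\mathbf{Z}_u\,q(u)=\mathbb{E}\bigl[e^{-\mathcal{H}(R_0^u\eta)}\mathbf{1}_{\{\eta\text{ dormant at }u\}}\bigr]$. On the event $\{\eta\text{ dormant at }s\text{ and at }s+t\}$ no cluster of $\eta$ contains the point $s$, so every cluster intersecting $[0,s+t]$ lies entirely in $[0,s]$ or entirely in $[s,s+t]$; additivity of $\mathcal{H}$ then gives $\mathcal{H}(R_0^{s+t}\eta)=\mathcal{H}(R_0^s\eta)+\mathcal{H}(R_s^{s+t}\eta)$. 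By the memoryless property of the exponential dormant periods, $\theta_s\eta$ is, conditional on $\{\eta\text{ dormant at }s\}$, independent of $R_0^s\eta$ and has the same law as $\eta$. These observations combine to give
\begin{equation*}
\mathbb{E}\bigl[e^{-\mathcal{H}(R_0^{s+t}\eta)}\mathbf{1}_{\{\eta\text{ dormant at }s\text{ and at }s+t\}}\bigr]\;=\;\mathbf{Z}_s\,q(s)\cdot\mathbf{Z}_t\,q(t),
\end{equation*}
and the displayed inequality follows from $\{\eta\text{ dormant at }s,\,s+t\}\subseteq\{\eta\text{ dormant at }s+t\}$.

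Setting $G(t):=\log(\mathbf{Z}_t\,q(t))$, the displayed inequality is exactly the superadditivity $G(s+t)\geq G(s)+G(t)$. The function $G$ is real valued (indeed $\mathbf{Z}_t>0$ since restricting the defining expectation to the event $\{d_1>t\}$, on which no clusters lie in $[0,t]$, already gives a positive contribution) and Borel measurable (by Fubini applied to its integral definition). The measurable form of Fekete's lemma therefore yields
\begin{equation*}
\lim_{t\to\infty}\frac{G(t)}{t}\;=\;\sup_{t>0}\frac{G(t)}{t}\;\in\;\mathbb{R}\cup\{\infty\}.
\end{equation*}
Since $q(t)$ tends to a strictly positive constant, $\log q(t)/t\to 0$, and hence $\log \mathbf{Z}_t/t$ has the same limit as $G(t)/t$, proving the proposition. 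The only slightly delicate point is the factorisation of the conditional expectation at the splitting point $s$; but this is the standard Markov-type consequence of the memorylessness of the exponential dormant periods, and is the place where it is crucial that the dormant distribution is exponential rather than arbitrary.
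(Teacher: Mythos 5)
Your proof is correct and follows essentially the same route as the paper: you work with $f(t)=\mathbf Z_t\,\mathbb P(\eta\text{ dormant at }t)$, establish superadditivity of $\log f$ via the additivity of $\mathcal H$ over clusters and the memorylessness of the exponential dormant periods (the paper phrases this as $f(t+s)=f(t)f(s)+\text{nonnegative remainder}$, you as restricting to the sub-event of being dormant at both $s$ and $s+t$), and then apply the measurable Fekete lemma together with the positivity of the limiting dormancy probability. No gaps.
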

\begin{proof}
	For $t>0$ let
	\begin{equation*}
	f(t) \coloneqq \mathbb E[ e^{- \mathcal H(R_0^{t} \eta)} \1_{\{\eta \text{ is dormant at } t\}}] = \mathbf Z_t \mathbb P(\eta \text{ is dormant at } t).
	\end{equation*}
	By the memorylessness property of the exponential distribution, conditionally on the event $\{\eta \text{ is dormant at } t\}$ the process $R_0^{t} \eta$ is independent of $R_{t}^{t+s} \eta$ for all $s, t>0$ and
	\begin{equation*}
		\mathbb P(\theta_t R_t^{t+s}\eta \in\, \cdot \, | \eta \text{ is dormant at }t) = \mathbb P(R_0^s \eta \in \, \cdot \,).
	\end{equation*}
	This yields for all $s, t>0$
	\begin{equation*}
	f(t+s) =  f(t)f(s) +  \mathbb E\big[ e^{ - \mathcal H(R_0^{t +s} \eta)} \1_{\{\eta \text{ is dormant at } t+s\}} \1_{\{\eta \text{ is active at } t\}}\big]
	\end{equation*}
	which implies, by the version of Feketes lemma for measurable superadditive functions, $\log(f(t))/t \to \sup_{s > 0 } \log(f(s))/s$ as $t \to \infty$. Since
		$\lim_{t \to \infty} \mathbb P(\eta \text{ is dormant at } t)>0$, the claim follows.
\end{proof}

\begin{lemma}
	\label{Lemma: limsup of quotient}
	Let $f:(0, \infty) \to (0, \infty)$ be a function such that $a \coloneqq \lim_{t\to \infty} \log(f(t))/t$ exists in $\R$ and such that $\limsup_{t\to \infty}f(t)e^{-at}<\infty$. Then we have
	\begin{equation*}
	\liminf_{t\to \infty} f(t)e^{-a t} > 0 \iff \limsup_{t\to \infty} \frac{f(2t)}{f(t)^2} < \infty.
	\end{equation*}	
\end{lemma}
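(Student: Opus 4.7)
My plan is to pass to the rescaled function $g(t) \coloneqq f(t) e^{-a t}$, because the quotient is invariant: $f(2t)/f(t)^2 = g(2t)/g(t)^2$. Since $\log f(t)/t \to a$ by hypothesis, we also have $\log g(t)/t \to 0$. The statement reduces to showing
\begin{equation*}
\liminf_{t\to\infty} g(t) > 0 \;\Longleftrightarrow\; \limsup_{t\to\infty} \frac{g(2t)}{g(t)^2} < \infty,
\end{equation*}
under the hypotheses $\limsup_{t\to\infty} g(t) < \infty$ and $\log g(t)/t \to 0$.

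The forward direction is elementary: if $c \coloneqq \liminf g(t) > 0$ and $C \coloneqq \limsup g(t) < \infty$, then for all sufficiently large $t$ we have $g(t) \geq c/2$ and $g(2t) \leq 2C$, so $g(2t)/g(t)^2 \leq 8C/c^2$.

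For the reverse direction, I would argue by contradiction. Assume $M \coloneqq \limsup_{t\to\infty} g(2t)/g(t)^2 < \infty$ but $\liminf_{t\to\infty} g(t) = 0$. Pick $T_0$ such that $g(2t) \leq 2M\, g(t)^2$ for all $t \geq T_0$, and then pick $t_1 \geq T_0$ with $g(t_1) < 1/(2M)$, which is possible since $\liminf g = 0$. Setting $b_k \coloneqq 2M\, g(2^k t_1)$, the inequality $g(2t) \leq 2M\,g(t)^2$ applied inductively gives
\begin{equation*}
b_{k+1} \;=\; 2M\, g(2^{k+1} t_1) \;\leq\; 2M \cdot 2M\, g(2^k t_1)^2 \;=\; b_k^2,
\end{equation*}
so that $b_k \leq b_0^{2^k}$ where $b_0 = 2M\,g(t_1) < 1$. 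Consequently $g(2^k t_1) \leq (2M)^{-1} b_0^{2^k}$, whence
\begin{equation*}
\frac{\log g(2^k t_1)}{2^k t_1} \;\leq\; \frac{-\log(2M)}{2^k t_1} + \frac{\log b_0}{t_1} \;\xrightarrow[k\to\infty]{}\; \frac{\log b_0}{t_1} \;<\; 0,
\end{equation*}
contradicting $\log g(t)/t \to 0$. The main (but minor) subtlety is setting up the doubly exponential iteration with the right constant, which is exactly the role of the factor $2M$ and the requirement $g(t_1) < 1/(2M)$; this ensures $b_0 < 1$ so that the iterated squaring actually drives $b_k \to 0$ fast enough to violate the subexponential growth of $g$.
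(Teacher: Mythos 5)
Your proof is correct and takes essentially the same route as the paper's: pass to $g(t)=f(t)e^{-at}$, note the quotient is invariant, and in the nontrivial direction iterate the doubling inequality along $t_k=2^k t_1$ to force doubly exponential decay of $g$, contradicting $\log g(t)/t\to 0$. The only (cosmetic) difference is your constant $2M$, which tacitly assumes $M>0$; the paper instead fixes a constant $c>1$ with $g(2t)\leq c\,g(t)^2$ eventually, which also covers the degenerate case $\limsup_{t\to\infty} g(2t)/g(t)^2=0$ — an easy fix in your version (take $\max(2M,1)$).
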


\begin{proof}
	Define $g(t) \coloneqq f(t) e^{-at}$ for $t>0$.
	First, assume that $\liminf_{t\to \infty} g(t) > 0$. Then 
	\begin{equation*}
	\limsup_{t \to \infty}\frac{f(2t)}{f(t)^2} = \limsup_{t \to \infty} \frac{g(2t)}{g(t)^2} < \infty.
	\end{equation*}
	Now, assume that $\limsup_{t\to \infty} f(2t)/f(t)^2 <\infty$.	Then there exists a $c>1$ and a $T>0$ such that $g(2t) \leq c g(t)^2$  for all $t \geq T$.
	Assume that $\liminf_{t\to \infty} g(t) = 0$ would hold. Then, there would exists a $t_0 \geq T$ such that $cg(t_0) < 1$. Set $t_k \coloneqq 2^kt_0$ for $k\in \N$. Then we have with $b\coloneqq (c g(t_0))^{1/t_0}$
	\begin{equation*}
	g(t_k) \leq c^{2^{k}-1} g(t_0)^{2^k} = \frac{1}{c} (c g(t_0))^{t_k/t_0} < b^{t_k}
	\end{equation*}
	for all $k\in \N$. However, as $\log(g(t))/t \to 0$ as $t\to \infty$ and $\log(b)< 0$ there exists a $k\in \N$ such that $g(t_k) > e^{t_k \log(b)} = b^{t_k}$.
\end{proof}

\begin{thrm}
	\label{Goodness criterions for alternating processes}
	Recall that $\varphi \coloneqq \lim_{t\to \infty} \log(\mathbf Z_t)/t \in \R \cup \{\infty\}$.	The following conditions are equivalent:
	\begin{enumerate}
		\item \label{Condition 1} Condition \eqref{goodness} holds, i.e.\ there exists a $\lambda \in \R$ such that 
		 \[ 
		 \mathbb E\big[e^{-\lambda T_1 - E(\chi_1)}\big] = 1 \qquad \text{and} \qquad 
		 \mathbb E\big[T_1 e^{-\lambda  T_1 - E(\chi_1)}\big] < \infty.
		 \]
		\item \label{Condition 2} $\limsup_{t\to \infty} \mathbf Z_{2t}/\mathbf Z_t^2 < \infty$
		\item  \label{Condition 3} $\liminf_{t\to \infty} \widehat \Gamma_{\alpha, t}^{\Xi}( \text{the system is dormant at }0) > 0$
		\item \label{Condition 4} $\liminf_{t \to \infty} \mathbf Z_t e^{-\varphi t}>0$.
	\end{enumerate}
	If \eqref{Condition 1}-\eqref{Condition 4} hold, then $\lambda = \varphi < \infty$ is the unique real number satisfying Condition \ref{Condition 1} and
	\begin{equation*}
		\lim_{t \to \infty} \mathbf Z_{t} e^{-\varphi t} = \frac{\alpha}{\alpha + \lambda}\frac{\mathbb E[T_1]}{\hat{\mathbb E}[\widehat T_1]}.
	\end{equation*}
\end{thrm}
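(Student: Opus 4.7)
My plan is to establish the chain $(1)\Rightarrow(4)\Leftrightarrow(2)\Leftrightarrow(3)$, then close the loop with $(4)\Rightarrow(1)$, which will be the main obstacle. The implication $(1)\Rightarrow(4)$ is the cleanest, because Proposition \ref{Proposition: Tilting measures} rewrites
\[
\mathbf Z_t e^{-\lambda t}=\frac{\Gamma^{\widehat\Xi}_{\alpha+\lambda}(\text{dormant at }t)}{\Gamma^{\Xi}_{\alpha}(\text{dormant at }t)}.
\]
A short computation based on $\mathbb E[e^{-\lambda d_1}]=\alpha/(\alpha+\lambda)$ together with the first identity in \eqref{goodness} gives $\hat{\mathbb E}[\widehat T_1]=\mathbb E[T_1 e^{-\lambda T_1-E(\chi_1)}]$, which is finite by the second part of \eqref{goodness}. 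Each dormancy probability in turn satisfies a standard renewal equation with absolutely continuous interarrival time, so the renewal theorem produces positive limits $1/(\alpha\mathbb E[T_1])$ and $1/((\alpha+\lambda)\hat{\mathbb E}[\widehat T_1])$, whence
\[
\mathbf Z_t e^{-\lambda t}\longrightarrow \frac{\alpha\,\mathbb E[T_1]}{(\alpha+\lambda)\hat{\mathbb E}[\widehat T_1]}>0.
\]
This simultaneously proves $(4)$, forces $\lambda=\varphi<\infty$, and yields the explicit formula; uniqueness of $\lambda$ follows since the same argument applied to any valid $\lambda$ would identify it with $\varphi$.

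For $(4)\Leftrightarrow(2)$ I invoke Lemma \ref{Lemma: limsup of quotient} with $f=\mathbf Z$; its missing hypothesis $\limsup_t \mathbf Z_t e^{-\varphi t}<\infty$ is free of charge from the superadditivity step in the proof of Proposition \ref{Existence ground state energy}: the function $\tilde f(t):=\mathbf Z_t\,\mathbb P(\eta\text{ dormant at }t)$ is log-superadditive, so $\log\tilde f(t)/t \le \varphi$ for all $t$, hence $\tilde f(t)\le e^{\varphi t}$, and $\mathbb P(\eta\text{ dormant at }t)$ is asymptotically bounded below by a positive constant. For $(2)\Leftrightarrow(3)$, memorylessness of the exponential dormant periods makes $R_0^t\eta$ and $\theta_{-t}R_t^{2t}\eta$ conditionally independent copies of the same process on $\{\eta\text{ dormant at }t\}$, and additivity of $\mathcal H$ across the two halves yields
\[
\widehat\Gamma^{\Xi}_{\alpha,t}(\text{dormant at }0)=\frac{\tilde f(t)^2}{\tilde f(2t)}=\frac{\mathbf Z_t^2}{\mathbf Z_{2t}}\cdot\frac{\mathbb P(\eta\text{ dormant at }t)^2}{\mathbb P(\eta\text{ dormant at }2t)},
\]
from which the equivalence $(2)\Leftrightarrow(3)$ is immediate given that the dormancy probabilities converge to the same positive limit.

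The main obstacle is $(4)\Rightarrow(1)$. Setting $\lambda=\varphi$ and decomposing $\tilde f(t)=\mathbb E[e^{-\mathcal H(R_0^t\eta)}\mathbbm 1_{\{\eta\text{ dormant at }t\}}]$ on the first embedded renewal, the function $g_\lambda(t):=e^{-\lambda t}\tilde f(t)$ satisfies the renewal equation
\[
g_\lambda(t)=\int_{[0,t]}g_\lambda(t-s)\,\mu_\lambda(\mathrm ds)+e^{-(\alpha+\lambda)t},\qquad \mu_\lambda(\mathrm ds):=\mathbb E[\mathbbm 1_{\{T_1\in\mathrm ds\}}e^{-\lambda T_1-E(\chi_1)}].
\]
By $(4)$ and the upper bound above, $g_\lambda$ stays bounded between two positive constants for large $t$, which forces $\alpha+\lambda\ge 0$ since otherwise the forcing term would diverge while $g_\lambda$ does not. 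If $\mu_\lambda([0,\infty))<1$, the renewal theorem for defective measures sends $g_\lambda\to 0$, contradicting $(4)$. If $\mu_\lambda([0,\infty))>1$, the moment generating function $h(\mu):=\mathbb E[e^{-\mu T_1-E(\chi_1)}]$ is convex, strictly decreasing on its finiteness domain with $h(+\infty)=0$, so by the intermediate value theorem there is some $\lambda'>\lambda$ in the \emph{interior} of this domain with $h(\lambda')=1$, at which $-h'(\lambda')=\mathbb E[T_1 e^{-\lambda'T_1-E(\chi_1)}]<\infty$; the already proved $(1)\Rightarrow(4)$ applied to $\lambda'$ then forces $\varphi=\lambda'>\lambda=\varphi$, a contradiction. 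Hence $\mu_\lambda$ is a probability measure; if its mean were infinite, the renewal theorem would again give $g_\lambda\to 0$, so $\mathbb E[T_1 e^{-\lambda T_1-E(\chi_1)}]<\infty$ and $\lambda=\varphi$ satisfies \eqref{goodness}. The main technical subtlety is ensuring that $\lambda'$ lies strictly inside the finiteness domain of $h$ so that $-h'(\lambda')$ is finite; convexity of $h$ together with $h(\varphi)>1$ and $h(+\infty)=0$ places $\lambda'$ in the interior, which closes the argument.
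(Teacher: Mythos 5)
Your chains $(1)\Rightarrow(4)$ (including the limit formula and uniqueness via Proposition \ref{Proposition: Tilting measures} and the renewal theorem), $(2)\Leftrightarrow(3)$ and $(2)\Leftrightarrow(4)$ follow the paper's own argument essentially step by step and are fine, modulo two small points the paper flags and you do not: for $(2)\Rightarrow(4)$ you must first argue that $(2)$ forces $\varphi<\infty$ (otherwise Lemma \ref{Lemma: limsup of quotient} is not applicable), and in your defective case you only establish $\alpha+\lambda\ge 0$, whereas the vanishing of the forcing term $e^{-(\alpha+\varphi)t}$ needed for Proposition \ref{renewal for subprobability-measures} to give $g_\lambda\to 0$ requires the strict inequality $\varphi>-\alpha$ (the paper gets this from $f\le 1$: if $\varphi=-\alpha$ the renewal equation forces the integral term to vanish, hence $E\equiv+\infty$, which is excluded since $E$ takes values in $\R\cup\{-\infty\}$).

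The genuine gap is in your disposal of the case $h(\varphi):=\mathbb E\big[e^{-\varphi T_1-E(\chi_1)}\big]>1$ inside $(4)\Rightarrow(1)$. You produce $\lambda'>\varphi$ with $h(\lambda')=1$ in the interior of the finiteness domain by the intermediate value theorem, justified by ``convexity of $h$, $h(\varphi)>1$, $h(+\infty)=0$''. But $h$ is the Laplace transform of the (possibly infinite) measure $e^{-E(\chi_1)}\,\mathrm d\mathbb P\circ T_1^{-1}$, and such a transform can be identically $+\infty$ to the left of its abscissa $\mu_c$ and jump down to a value $\le 1$ at $\mu_c$ (take a density proportional to $e^{-\mu_c s}s^{-2}\1_{\{s\ge 1\}}$); so when $h(\varphi)=+\infty$ there need not exist any $\lambda'$ with $h(\lambda')=1$ at all, let alone one interior where $\mathbb E[T_1e^{-\lambda' T_1-E(\chi_1)}]<\infty$. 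Convexity and monotonicity do not exclude this jump, and this boundary behaviour is exactly what the paper is careful about (Proposition \ref{at least <= 1}, the shaded region in Figure \ref{fig:phases}); you also never verify that the finiteness domain of $h$ is nonempty, which is needed even to write $h(+\infty)=0$. The paper avoids the issue altogether: under $(4)$ one has $\liminf_t f(t)>0$ for $f(t)=\mathbf Z_te^{-\varphi t}\mathbb P(N_t(\eta)=0)$, and applying Fatou's lemma to the renewal equation \eqref{renewal equation} and dividing by $\liminf_t f(t)$ gives $\mathbb E[e^{-\varphi T_1-E(\chi_1)}]\le 1$ outright, so the supercritical case never occurs. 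Your route could be repaired by first showing $h(\varphi)<\infty$ under $(4)$, e.g.\ from $1\ge f(t)\ge \big(\inf_s f(s)\big)\,\mathbb E\big[\1_{\{T_1\le t\}}e^{-\varphi T_1-E(\chi_1)}\big]$ with $\inf_s f(s)>0$, after which the IVT on $[\varphi,\infty)$ is legitimate; as written, however, the step is not justified.
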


\begin{proof}
	We will use the same notation as in the proof of Proposition \ref{Proposition: local convergence of alternating process}.
	First, we will point out that Conditions \eqref{Condition 2} and \eqref{Condition 3} are equivalent. Using the same argument as in the proof of Proposition \ref{Existence ground state energy} we obtain
	\begin{align*}
	\widehat \Gamma_{\alpha, t}^\Xi(N_0 = 0) &= \frac{1}{\mathbf Z_{2t} \mathbb P (N_{2t}(\eta) = 0)} \mathbb E[e^{ -\mathcal H(R_0^{t}\eta)} \1_{\{N_{t}(\eta) = 0\}} e^{-\mathcal H(R_{t}^{2t}\eta)} \1_{\{N_{2t}(\eta) = 0\}}]\\
	&= \frac{1}{\mathbf Z_{2t} \mathbb P (N_{2t}(\eta) = 0)} \mathbb E[e^{-\mathcal H(R_0^{t}\eta)} \1_{\{N_{t}(\eta) = 0\}}]^2 \\
	&= \frac{\mathbb P (N_{t}(\eta) = 0)^2}{\mathbb P (N_{2t}(\eta) = 0)} \frac{\mathbf Z_{t}^2}{\mathbf Z_{2t}}.
	\end{align*}
	The equivalence of both conditions follows from
	\begin{equation*}
	\lim_{t \to \infty} \frac{\mathbb P (N_{t}(\eta) = 0)^2}{\mathbb P (N_{2t}(\eta) = 0)} = \frac{\mathbb E[d_1]}{\mathbb E[T_1]}.
	\end{equation*}	
	Similar to the proof of Proposition \ref{Lemma: limsup of quotient}, one can see that Condition \eqref{Condition 2} implies $\varphi<\infty$.
	The equivalence of Conditions \eqref{Condition 2} and \eqref{Condition 4} follows from Lemma \ref{Lemma: limsup of quotient} since
	\begin{equation}
	\label{upper bound on f}
	\log(\mathbf Z_t \mathbb P(N_{t}(\eta) = 0))/t \leq \varphi
	\end{equation}
	holds by superadditivity (as seen in in the proof of Proposition \ref{Existence ground state energy}).
	Finally, we show that Conditions \eqref{Condition 1} and \eqref{Condition 4} are equivalent. Assume that there exists a $\lambda \in \R$ as in Condition \eqref{Condition 1}. By Proposition \ref{Proposition: Tilting measures}, we then have
	\begin{equation*}
	\mathbf Z_t e^{-\lambda t} = \frac{\hat{\mathbb P}(N_{t}(\hat \eta) = 0)}{\mathbb P(N_{t}(\eta) = 0)} \to \frac{ \hat{\mathbb E}_\alpha[\hat d_1] / \hat{\mathbb E}_\alpha[\hat T_1]}{\mathbb E_\alpha[d_1] /\mathbb E_\alpha[T_1]} = \frac{\alpha}{\alpha+ \lambda} \frac{\mathbb E[T_1]}{\hat{\mathbb E}[\hat T_1]}
	\end{equation*}
	as $t\to \infty$. As the limit is finite and positive, this additionally implies that $\log(\mathbf Z_t) -t\lambda$ converges to a real number as $t\to \infty$ and hence, $\lambda = \varphi < \infty$. For the other direction, assume that
	\begin{equation*}
		\liminf_{t \to \infty} \mathbf Z_t e^{-\varphi t}>0.
	\end{equation*}
	Then in particular $\varphi<\infty$. We define $f  :[0, \infty) \to (0, \infty)$ by 
	\begin{align*}
	f(t) \coloneqq e^{-\varphi t} \mathbb E[e^{-\mathcal H(R_{0}^{t}\eta)} \1_{\{N_{t}(\eta) = 0\}}] = \mathbf Z_t e^{-\varphi t} \mathbb P(N_{t}(\eta)=0).
	\end{align*}
	By the usual renewal argument
	\begin{align}
	\label{renewal equation}
	f(t) &= \mathbb E[\1_{\{T_1 \leq t\}} e^{-\varphi T_1-E(\chi_1)} f(t-T_1)] + e^{-\varphi t}\mathbb P(d_1 > t) \\
	&= \mathbb E[\1_{\{T_1  \leq t\}} e^{-\varphi T_1-E(\chi_1)} f(t-T_1)] + e^{-(\varphi + \alpha) t}. \nonumber
	\end{align}
	That is, $f$ satisfies a renewal equation with respect to the image measure of $T_1$ under the (possibly non probability-) measure $ e^{-\varphi T_1-E(\chi_1)} \mathrm d \mathbb P$. Notice that Equation \eqref{upper bound on f} implies $f(t) \leq 1$ for all $t\geq0$ and thus $\varphi> -\alpha$.
	Equation \eqref{renewal equation} in combination with Fatou's Lemma implies
	\begin{equation}
	\label{inequality}
	\mathbb E\Big[\liminf_{t \to \infty}\1_{\{T_1  \leq t\}} e^{-\varphi T_1-E(\chi_1)} f(t - T_1) \Big] \leq \liminf_{t \to \infty} f(t).
	\end{equation}
	Since
	\begin{equation*}
	\liminf_{t\to \infty} f(t) = \liminf _{t\to \infty} \mathbf Z_t e^{-\varphi t} \mathbb P(N_{t}(\eta)=0) = \tfrac{\mathbb E[d_1]}{\mathbb E[T_1]} \liminf _{t\to \infty} \mathbf Z_t e^{-\varphi t} >0
	\end{equation*}
	we can divide both sides of Inequality \eqref{inequality} by $\liminf_{t \to \infty} f(t)$ and obtain
	\begin{equation*}
	\mathbb E[e^{-\varphi T_1-E(\chi_1)}] \leq 1.
	\end{equation*}
	If $\mathbb E[e^{-\varphi T_1-E(\chi_1)}] <1$ or $\mathbb E[T_1 e^{-\varphi T_1-E(\chi_1)}]=\infty$ would hold, then $f$ (as the solution to the renewal equation \eqref{renewal equation}) would converge to zero by renewal theory (compare Proposition \ref{renewal for subprobability-measures}, Theorem \ref{renewal theorem}). We thus conclude that Condition \eqref{Condition 1} holds.
\end{proof}

We finish our treatment of the abstract alternating processes by first considering the case that \eqref{goodness} is not satisfied and then giving two sufficient criteria for \eqref{goodness}.

\begin{prop}
	\label{at least <= 1}
	Assume $\varphi< \infty$. Irrespective whether \eqref{goodness} is satisfied, we have
	\begin{equation*}
		\varphi = \min\Big\{\lambda\in \R: \, \mathbb E \big[e^{-\lambda T_1 - E(\chi_1)}  \big] \leq 1   \Big\}.
	\end{equation*}
	If \eqref{goodness} does not hold then $\lim_{t\to \infty} \mathbf Z_te^{-\varphi t} = 0$. 
\end{prop}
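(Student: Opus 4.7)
The plan is to identify $\varphi$ with the critical exponent $\lambda_c := \inf\{\lambda \in \R : \phi(\lambda) \leq 1\}$, where $\phi(\lambda) := \mathbb E[e^{-\lambda T_1 - E(\chi_1)}]$, by proving the two inequalities $\varphi \geq \lambda_c$ and $\varphi \leq \lambda_c$ separately, and then to deduce the vanishing of $\mathbf Z_t e^{-\varphi t}$ in the absence of \eqref{goodness} from classical renewal asymptotics. As preliminaries I would record three facts used throughout: (i) $\phi$ is strictly decreasing on $\{\phi<\infty\}$ and tends to $0$ as $\lambda\to\infty$ (because $T_1>0$ a.s.); (ii) $\phi(\lambda) = +\infty$ for $\lambda \leq -\alpha$, since $d_1 \sim \operatorname{Exp}(\alpha)$ is independent of $(a_1,\chi_1)$, so in particular any $\lambda$ with $\phi(\lambda)\leq 1$ must satisfy $\lambda > -\alpha$; and (iii) the function $f(t) := \mathbf Z_t\,\mathbb P(\eta \text{ is dormant at }t)$ satisfies $f \leq e^{\varphi t}$ (Proposition \ref{Existence ground state energy}), $f(t) \geq e^{-\alpha t}$ (so $\varphi \geq -\alpha$), and its exponential tilts $f_\lambda(t):=e^{-\lambda t}f(t)$ obey the renewal equation
\[
f_\lambda(t) = \mathbb E\big[\mathds 1_{\{T_1 \leq t\}}\, e^{-\lambda T_1 - E(\chi_1)}\, f_\lambda(t-T_1)\big] + e^{-(\lambda+\alpha)t}
\]
derived in the proof of Theorem \ref{Goodness criterions for alternating processes}.

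The first inequality $\varphi \geq \lambda_c$ I plan to get by an integration trick applied to the tilted renewal equation for any $\lambda > \varphi$. Because $f \leq e^{\varphi t}$ and $\lambda + \alpha > \varphi + \alpha > 0$, the bound $f_\lambda(t) \leq e^{(\varphi-\lambda)t}$ puts $\int_0^\infty f_\lambda(t)\,dt$ in $(0,\infty)$; integrating the renewal equation and applying Fubini on the convolution yields
\[
\int_0^\infty f_\lambda(t)\,dt \;=\; \phi(\lambda)\int_0^\infty f_\lambda(t)\,dt \;+\; \frac{1}{\lambda+\alpha},
\]
so $(1-\phi(\lambda))\int f_\lambda = 1/(\lambda+\alpha) > 0$ forces $\phi(\lambda) < 1$. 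Letting $\lambda \downarrow \varphi$ and invoking monotone convergence gives $\phi(\varphi) \leq 1$, hence $\varphi \geq \lambda_c$.

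The reverse inequality $\varphi \leq \lambda_c$ is essentially a repetition of the proof of Proposition \ref{Proposition: Partition function locally bounded}. For any $\lambda > \lambda_c$, strict monotonicity of $\phi$ gives $\phi(\lambda) < 1$, and $\lambda > \lambda_c > -\alpha$ by (ii), so $e^{-(\lambda+\alpha)t} \leq 1$. Iterating the tilted renewal equation, using local boundedness to make the $n$-fold convolution $\mu_\lambda^{*n}*f_\lambda$ vanish as $n\to\infty$ (its total mass $\phi(\lambda)^n\to 0$), delivers the uniform bound $f_\lambda(t) \leq 1/(1-\phi(\lambda))$, whence $\mathbf Z_t \leq C e^{\lambda t}$ and $\varphi \leq \lambda$; sending $\lambda \downarrow \lambda_c$ finishes the identification. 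Since $\phi(\lambda_c) \leq 1$ by monotone convergence, the infimum is attained and equals the minimum claimed in the statement.

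For the last assertion, assume \eqref{goodness} fails. Since $\varphi = \lambda_c$ we have $\phi(\varphi)\leq 1$, and the failure splits into case (A) $\phi(\varphi) < 1$, or case (B) $\phi(\varphi)=1$ with $\mathbb E[T_1 e^{-\varphi T_1 - E(\chi_1)}] = \infty$. In both cases the tilted measure $\mu_\varphi$ is absolutely continuous (inherited from $d_1$), the forcing $e^{-(\varphi+\alpha)t}$ is bounded, integrable, and vanishes at infinity (since $\varphi+\alpha>0$), and the renewal equation for $f_\varphi$ applies directly: case (A) falls under Proposition \ref{renewal for subprobability-measures} and gives $f_\varphi(t) \to 0/(1-\phi(\varphi)) = 0$, while case (B) falls under Theorem \ref{renewal theorem} with the convention $1/\infty = 0$ and likewise gives $f_\varphi(t)\to 0$. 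Dividing by $\mathbb P(\eta \text{ dormant at }t) \to \mathbb E[d_1]/\mathbb E[T_1] > 0$ concludes $\mathbf Z_t e^{-\varphi t} \to 0$. The part I expect to be most delicate is the integration argument in the first step: positivity of $\int f_\lambda$ is immediate from $f \geq e^{-\alpha t}$, but lining up all the sign conditions (which pivots on the sharp observation $\varphi \geq -\alpha$) is what makes the clean contradiction possible.
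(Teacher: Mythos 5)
Your argument is correct, and for the identification $\varphi=\min\{\lambda\in\R:\,\mathbb E[e^{-\lambda T_1-E(\chi_1)}]\le 1\}$ it takes a genuinely different route from the paper. For the inequality $\mathbb E[e^{-\varphi T_1-E(\chi_1)}]\le 1$ the paper truncates (restricting to $\{T_1\le N,\,E(\chi_1)\ge -N\}$), uses the intermediate value theorem to manufacture a tilt of unit mass, and then derives a contradiction with the superadditivity bound $\mathbf Z_t\,\mathbb P(\eta\text{ dormant at }t)\le e^{\varphi t}$ via the renewal theorem; you instead integrate the tilted renewal equation over $[0,\infty)$ for $\lambda>\varphi$ and read off $(1-\phi(\lambda))\int_0^\infty f_\lambda=1/(\lambda+\alpha)>0$, which is shorter and avoids both the truncation and any renewal limit theorem. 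For minimality the paper assumes $\phi(\varphi-\varepsilon)<1$ and invokes the defective renewal result (Proposition \ref{renewal for subprobability-measures}) to force exponential decay of $\mathbf Z_te^{-\varphi t}$, contradicting the definition of $\varphi$; you instead iterate the renewal equation for $\lambda>\lambda_c$ to get the geometric bound $f_\lambda\le (1-\phi(\lambda))^{-1}$, essentially a renewal-equation rerun of the supermartingale bound of Proposition \ref{Proposition: Partition function locally bounded} (your use of local boundedness of $f_\lambda$ to kill the remainder term is covered by the standing assumption that $t\mapsto \mathbf Z_t$ is locally bounded). The final claim, $\mathbf Z_te^{-\varphi t}\to 0$ when \eqref{goodness} fails, you handle exactly as the paper does, via Proposition \ref{renewal for subprobability-measures} in the defective case and Theorem \ref{renewal theorem} with the convention $1/\infty=0$ in the unit-mass, infinite-mean case.

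One point to tighten: in your preliminaries you only establish $\varphi\ge-\alpha$, but in the last step (and in the parenthetical ``$\lambda+\alpha>\varphi+\alpha>0$'' of step one, where in fact only $\lambda+\alpha>0$ is needed) you use $\varphi+\alpha>0$ strictly; if $\varphi=-\alpha$ the forcing term $e^{-(\varphi+\alpha)t}$ would be constant, and neither cited renewal statement would yield $f_\varphi\to 0$. The strict inequality is true and is also what the paper relies on (it is asserted in the proof of Theorem \ref{Goodness criterions for alternating processes}); it follows in one line from superadditivity: $\varphi\ge \log f(s)/s$ and $f(s)\ge e^{-\alpha s}\bigl(1+\mathbb E\bigl[\1_{\{T_1\le s\}}e^{\alpha T_1-E(\chi_1)}\bigr]\bigr)>e^{-\alpha s}$ for any fixed $s$ with $\mathbb P(T_1\le s)>0$, so $\varphi>-\alpha$. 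With that one-line patch your proof is complete.
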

\begin{proof}
	Assume that $\mathbb E[e^{-\varphi T_1 - E(\chi_1)}] > 1$. Then there would exist a $N\in \N$ such that
	\begin{equation*}
		\mathbb E\big[e^{- \varphi T_1 - E(\chi_1)}\1_{\{T_1 \leq N, \, E(\chi_1) \geq -N\}}\big] > 1.
	\end{equation*}	
	By the intermediate value theorem, there would exists a $\lambda>0$ such that
	\begin{equation}
		\label{euqation: phi is minimum}
		\mathbb E\big[e^{-(\lambda + \varphi) T_1 - E(\chi_1)}\1_{\{T_1 \leq N, \, E(\chi_1) \geq -N\}}\big] = 1.
	\end{equation}	
	For $t\geq 0$ set
	\begin{equation*}
		f_N(t) \coloneqq e^{-(\varphi + \lambda) t} \mathbb E\Big[\prod_{i=1}^{\tau_t -1} e^{-E(\chi_{i})} \1_{\{T_i \leq N, \, E(\chi_{i}) \geq -N\}} \1_{ \{N_t(\eta) = 0 \} } \Big].
	\end{equation*}
	As in the proof of Theorem \ref{Goodness criterions for alternating processes}, one can derive a renewal equation for $f_N$. 
	Equation \eqref{euqation: phi is minimum} in combination with the renewal theorem would imply convergence of $f_N(t)$ to a positive real number as $t\to\infty$. This, however, contradicts
	\begin{equation*}
		f_N(t) \leq e^{-(\varphi + \lambda) t} \mathbb E\Big[\prod_{i=1}^{\tau_t -1} e^{-E(\chi_{i})} \1_{ \{N_t(\eta) = 0 \} } \Big] = e^{-(\varphi + \lambda) t} \mathbf Z_t \mathbb P(\eta \text{ is dormant at }t)
	\end{equation*}
	for all $t>0$. Thus  $\mathbb E[e^{-\varphi T_1 - E(\chi_1)}] \leq 1$. If $\mathbb E\big[e^{-\varphi T_1 - E(\chi_1)}\big] < 1$ or $\mathbb E\big[T_1 e^{-\varphi  T_1 - E(\chi_1)}\big] = \infty$ holds then $f$	
	as defined in the proof of Theorem \ref{Goodness criterions for alternating processes} converges to zero and thus
	\begin{equation*}
		\lim_{t \to \infty} \mathbf Z_t e^{-\varphi t} = 0.
	\end{equation*}
	Similiarily it can be shown, that $\varphi$ is indeed the minimum: Assume there would exists a $\varepsilon \in (0, \varphi + \alpha)$ such that
	\begin{equation*}
		\mathbb E[ e^{-(\varphi - \varepsilon)T_1 - E(\chi_1)}] < 1.
	\end{equation*}
	Then, by Proposition \ref{renewal for subprobability-measures}, we would have $e^{\varepsilon t} f(t) \to 0$ as $t\to \infty$ i.e. exponential decay of $t\mapsto \mathbf Z_t e^{-\varphi t}$.	
\end{proof}
\begin{prop}
	\label{Remark: stronger version of goodness}
	The following two conditions are sufficient for \eqref{goodness} to hold:
	\begin{enumerate}
		\item There exists a $\mu \in \R$ such that
		\begin{equation*}
		1< \mathbb E\big[e^{-\mu T_1 - E(\chi_1)}\big] < \infty.
		\end{equation*}
		\item We have $\varphi< \infty$ and there exists an $\varepsilon>0$ such that
		\begin{equation*}
			\mathbb E\big[e^{-(\varphi - \varepsilon) T_1 - E(\chi_1)}\big] < \infty.
		\end{equation*}
	\end{enumerate}
\end{prop}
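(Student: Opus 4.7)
The plan is to analyze the Laplace-type transform
\[
g(\lambda) \coloneqq \mathbb E\bigl[ e^{-\lambda T_1 - E(\chi_1)} \bigr], \qquad \lambda \in \R,
\]
as a function of $\lambda$, and to reduce condition (2) to condition (1).

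For (1), fix $\mu$ with $1 < g(\mu) < \infty$. Since $T_1 \geq d_1 > 0$ almost surely, for every $\lambda \geq \mu$ the integrand is pointwise dominated by $e^{-\mu T_1 - E(\chi_1)}$. Hence $g$ is finite, and dominated convergence shows that $g$ is continuous and (weakly) decreasing on $[\mu, \infty)$, with $g(\lambda) \to 0$ as $\lambda \to \infty$ because $e^{-\lambda T_1} \to 0$ pointwise. Since $g(\mu) > 1$, the intermediate value theorem produces $\lambda_0 \in (\mu, \infty)$ with $g(\lambda_0) = 1$, which is the first half of \eqref{goodness}.

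The only genuine point, and what I expect to be the (mild) main obstacle, is the first-moment condition $\mathbb E[T_1 e^{-\lambda_0 T_1 - E(\chi_1)}] < \infty$. The idea is to exploit that $\lambda_0$ is strictly larger than $\mu$, so there is a whole open interval $(\mu, \lambda_0)$ on which $g$ is still finite and from which a linear damping factor can be extracted. Concretely, pick any $\lambda' \in (\mu, \lambda_0)$; then $C \coloneqq \sup_{t \geq 0} t\, e^{-(\lambda_0 - \lambda')t} = [e(\lambda_0 - \lambda')]^{-1}$ is finite. Factoring $T_1 e^{-\lambda_0 T_1} = T_1 e^{-(\lambda_0 - \lambda')T_1} \cdot e^{-\lambda' T_1}$ gives
\[
\mathbb E\bigl[ T_1 e^{-\lambda_0 T_1 - E(\chi_1)}\bigr] \leq C\, g(\lambda') \leq C\, g(\mu) < \infty,
\]
completing the verification of \eqref{goodness}.

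For (2), the standing hypothesis $\varphi < \infty$ allows the invocation of Proposition \ref{at least <= 1}, which states $\varphi = \min\{\lambda \in \R : g(\lambda) \leq 1\}$. Therefore $\mu \coloneqq \varphi - \varepsilon < \varphi$ forces $g(\mu) > 1$, while the hypothesis gives $g(\mu) < \infty$. This places us exactly in the setting of part (1), and \eqref{goodness} follows at once with the same $\lambda_0$ constructed there; in fact Theorem \ref{Goodness criterions for alternating processes} then identifies $\lambda_0 = \varphi$.
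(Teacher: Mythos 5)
Your proof is correct and follows essentially the same route as the paper: use the intermediate value theorem to find $\lambda_0>\mu$ with $\mathbb E[e^{-\lambda_0 T_1-E(\chi_1)}]=1$, absorb the factor $T_1$ into the exponential gap $\lambda_0-\mu>0$ to get the first-moment condition, and reduce case (2) to case (1) via Proposition \ref{at least <= 1}. The only (harmless) difference is the intermediate $\lambda'\in(\mu,\lambda_0)$, where the paper bounds $T_1 e^{-\lambda_0 T_1}\leq C e^{-\mu T_1}$ directly.
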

\begin{proof}
	Assume there exists a $\mu \in \R$ such that $1< \mathbb E\big[e^{-\mu T_1 - E(\chi_1)}\big] < \infty$.
	Then, by the intermediate value theorem, there exists a $\lambda> \mu$ such that $\mathbb E\big[e^{-\lambda T_1 - E(\chi_1)}\big]  = 1$. Then $\mathbb E\big[ T_1 e^{-\lambda T_1 - E(\chi_1)}\big]  < \infty$ holds as well, as $t \in \mathcal O(e^{(\lambda - \mu) t})$ for $t \to \infty$. 
	The second claim follows from the first one and Proposition \ref{at least <= 1}.
\end{proof}

We are now in the position to prove Theorem \ref{Theorem: Equvialent conditions for goodness in the setting of path measures} and Theorem \ref{Theorem: Existence infinite volume measure, long version} by applying the previous results to the measures introduced in Section \ref{Section: Outline}. We consider $\mathcal X = \mathbf N_f(\triangle)$ and $(d_n)_n$, $(a_n)_n$ to be the sequences of dormant and active periods of the $M/G/\infty$-queue and $(\chi_{n})_n = (\xi_n)_n$ to be the sequence of clusters of the queue. Additionally, we choose $E = -\log(F)$. Since the first active period $a_1 \eqqcolon a(\xi_1)$ is a function of the first cluster $\xi_1$, we reduce notation and denote by $\Xi_\alpha$ (as in Section \ref{Section: Outline}) the distribution of $\xi_1$ (and not the joint distribution of $(a_1, \xi_1)$). 

\begin{proof}[Proof of Theorem \ref{Theorem: Equvialent conditions for goodness in the setting of path measures} and Theorem \ref{Theorem: Existence infinite volume measure, long version}]
	The number of customers present at time $T>0$ in the $M/G/\infty$-queue (started empty at time zero) is $\operatorname{Poi}(\beta(\alpha, T))$ distributed, where
	\begin{equation*}
		\beta(\alpha, T) = \alpha \int_0^T \int_T^\infty g(t-s) \, \mathrm dt \mathrm ds  = \alpha \int_0^T \mathbb P(\tau_1 > r) \, \mathrm dr.
	\end{equation*}
	Hence,
	\begin{equation*}
		\mathbb E_\alpha[T_1] = \frac{\mathbb E_\alpha[d_1]}{\lim_{T \to \infty} \mathbb P_\alpha(\eta \text{ is dormant at }T)} = \frac{1/\alpha}{\lim_{T \to \infty} e^{-\beta(\alpha, T)}} = \frac{e^{\alpha \mathbb E[\tau_1]}}{\alpha}.
	\end{equation*}
	As mentioned before
	\begin{equation*}
	e^{c_{\alpha, T}} \sim e^{2\alpha T - \alpha \mathbb E[\tau_1]} = (\alpha \mathbb E[T_1])^{-1} e^{2\alpha T}
	\end{equation*}
	as $T \to \infty$. In particular, since
	\begin{equation*}
	\mathbf Z_{\alpha, 2T} \coloneqq \int_{\mathbf N_f(\triangle)} \mathrm d \Gamma_{\alpha, T}(\xi)\, F(\xi) = Z_{\alpha, 2T}e^{-c_{\alpha, T}}
	\end{equation*}
	we have
	\begin{equation*}
	\varphi(\alpha) \coloneqq \lim_{T \to \infty} \log(\mathbf Z_{\alpha, T})/T = \psi(\alpha) - \alpha.
	\end{equation*}
	In combination with Theorem \ref{Goodness criterions for alternating processes} the above yields Theorem \ref{Theorem: Equvialent conditions for goodness in the setting of path measures}.
	We have
	\begin{equation*}
	\mathbb E_{\alpha}\Big[e^{-(\psi(\alpha)-\alpha) T_1} F(\xi_1) \Big] = 1, \quad \mathbb E_{\alpha}\Big[T_1e^{-(\psi(\alpha)-\alpha) T_1} F(\xi_1) \Big] < \infty
	\end{equation*}
	and thus Corollary \ref{Corollary: Convergence of GIbbs measures if we are good} and Equation \eqref{Mixing of restriction of path measures} yield for all $a<b$
	\begin{equation*}
	\mathbb P_{\alpha, T}(A) \to \int_{\mathbf N(\triangle)} \widehat{\Gamma}_{\alpha, \operatorname{st}}(\mathrm d \xi) \mathbf P_{R_a^b\xi}(A) \eqqcolon \mathbb P_{\alpha, \infty}^{a, b}(A)
	\end{equation*}
	uniformly in $A\in \mathcal A_a^b$ as $T\to \infty$. Consistency of the family of measures $(\mathbb P_{\alpha, \infty}^{a, b})_{a<b}$ implies the existence of the measure $\mathbb P_{\alpha, \infty}$.	
\end{proof}

\begin{remark}
	\label{Remark: Measures as mixtures of Gaussian measures}
	Let us assume that $w(t, \cdot) = \tilde w(t, |\cdot |)$ is rotationally symmetric and positive and that $r \mapsto \tilde w(t, \sqrt{r})$ is completely monotone on $(0, \infty)$ with $\tilde w(t, 0) = \lim_{r \to 0} \tilde w(t, r)$ for all $t>0$. We will represent the measure $\mathbb P_{\alpha, T}$ as a mixture of Gaussian measures. For the Polaron measure, this will make contact between our representation and the representation introduced in \cite{MV19}. Additionally, this representation can be used in order to show that (under these stronger assumptions on $w$) the convergence of the finite dimensional distributions in the proof of the central limit theorem below is even in total variation.
	By Bernsteins Theorem, there exists for all $t>0$ a Radon measure $\tilde \mu_t$ on $[0, \infty)$ such that for all $r\geq 0$
	\begin{equation*}
	\tilde v(t, \sqrt{2r}) = \int_{[0,\infty)} \tilde\mu_t(\mathrm du) \, e^{-u r}
	\end{equation*}
	where $v(t, \cdot) \eqqcolon \tilde v(t, |\cdot|)$. With $\mu_t \coloneqq \tilde \mu_t \circ \sqrt{\, \cdot\, }^{\, -1}$ we have for all $t>0$ and $x\in \R^d$
	\begin{equation*}
	v(t, x) = \int_{[0,\infty)} \mu_t(\mathrm du) \, e^{-u^2|x|^2/2}.
	\end{equation*}
	Hence, we can further rewrite for $A\in \mathcal A$
	\begin{align}
	\label{representation of P by Q}
	\mathbf P_{\xi}(A) \nonumber
	=& \frac{1}{F(\xi)}\int_A \mathcal W(\mathrm d \mathbf x) \, \prod_{i=1}^{n} v(t_i-s_i, \mathbf x_{s_i, t_i}) \nonumber \\
	&= \frac{1}{F(\xi)}\int_{[0, \infty)^{n}} \bigotimes_{i=1}^n\mu_{t_i-s_i}(\mathrm du_i)\int_A  \mathcal W(\mathrm d\mathbf x) \, e^{ -\frac{1}{2} \sum_{i=1}^{n} u_i^2 |\mathbf x_{s_i, t_i}|^2}.
	\end{align}
	We normalize the inner expression by marking a point process with distribution $\widehat \Gamma_{\alpha, T}$ accordingly.
	Given a locally compact Polish space $E$ we can identify a probability measure on $(\mathbf N_f(E), \mathcal N_f(E))$ with a symmetric probability measure on $E^\cup \coloneqq \bigcup_{n=0}^\infty E^n$. Let $E_1$ and $E_2$ be two locally compact Polish spaces and $\kappa:E_1^\cup \times \mathcal B(E_2^\cup) \to [0, 1]$ be a probability kernel satisfying
	\begin{enumerate}
		\item $\kappa(x, E_2^n) = 1$ for all $x\in E_1^n, n\in \N_0$
		\item $\kappa(\sigma x, \sigma A) = \kappa (x, A)$ for all $x\in E_1^n, A\in \mathcal B(E_2^n)$, $\sigma \in S_n$ and $n\in \N$
	\end{enumerate}
	where $\sigma x \coloneqq (x_{\sigma(1)}, \hdots, x_{\sigma(n)})$ for $x\in E_i^n$, a permutation $\sigma\in S_n$ and $i\in \{1, 2\}$. Given a probability measure $\mathcal P$ on $\mathbf N_f(E_1)$ we can define the marked distribution $\mathcal P \otimes \kappa$ in the following way: Draw a sample $\sum_{i=1}^n \delta_{x_i}$ according to $\mathcal P$, draw marks $(y_1,\hdots, y_n)$ according to $\kappa((x_1, \hdots,x_n), \cdot)$ and obtain $\sum_{i=1}^n \delta_{(x_i, y_i)}$. That is, under the identification mentioned above,
	\begin{equation*}
	 \big(\mathcal P \otimes \kappa\big)(\mathrm d x \, \mathrm d  y) = \mathcal P(\mathrm dx) \kappa(x, \mathrm dy).
	\end{equation*}
	We apply this to our case and define for $\zeta = \sum_{i=1}^{n}\delta_{(s_i, t_i, u_i)} \in \mathbf N_f(\triangle \times [0, \infty))$ the centered Gaussian measure $\mathbf Q_\zeta$ on $(C(\R, \R^d), \mathcal A)$ by
	\begin{equation*}
		\mathbf Q_{\zeta}(\mathrm d \mathbf x)= \frac{1}{\phi(\zeta)} \exp\Big(-\frac{1}{2} \sum_{i=1}^{n} u_i^2 |\mathbf x_{s_i, t_i}|^2 \Big)  \, \mathcal W(\mathrm d \mathbf x)
	\end{equation*}
	where $\phi(\zeta)$ is a normalization constant and $\kappa: \triangle^\cup \times \mathcal B([0, \infty)^\cup) \to [0, 1]$ by
	\begin{equation*}
	\kappa(\xi, \mathrm du) \coloneqq \frac{\phi(\xi, u)}{ F(\xi)} \, \bigotimes_{i=1}^n\mu_{t_i-s_i}(\mathrm du_i)
	\end{equation*}
	with $\phi(\xi, u) \coloneqq 0$ for $\xi\in \triangle^n$, $u\in [0, \infty)^m$ with $n \neq m$. By Equation \eqref{representation of P by Q} we then obtain
	\begin{equation*}
	\mathbb P_{\alpha, T}(\cdot) = \int_{\mathbf N_f(\triangle \times [0, \infty))} \big(\widehat{\Gamma}_{\alpha, T}\otimes \kappa\big)(\mathrm d \zeta) \, \mathbf Q_{\zeta}(\cdot).
	\end{equation*}
	Notice that local convergence of the measures $\widehat \Gamma_{\alpha, T}$ immediately implies local convergence of the measures $\widehat \Gamma_{\alpha, T} \otimes \kappa$.
	Provided that \eqref{Our condition} is satisfied, one can obtain $\widehat \Gamma_{\alpha, T}\otimes \kappa$ by marking the tilted clusters independently of each other. That is, starting in $-T$ we alternate independently drawn $\operatorname{Exp}(\psi(\alpha))$ distributed dormant periods with $\widehat{\Xi}_{\alpha}\otimes \kappa $ distributed marked clusters. Conditionally on the event that the system is dormant at $T$, the process of marked customers arriving between $-T$ and $T$ has distribution $\widehat{\Gamma}_{\alpha,T} \otimes \kappa$. In particular, the explicit form of the infinite volume measure is the same if we replace the measures $\mathbf P_\xi$ with the measures $\mathbf Q_\zeta$ and $\widehat \Xi_{\alpha}$ with $\widehat \Xi_{\alpha} \otimes \kappa$. \\
	For the special case of the Polaron, i.e. $w(t, x) \coloneqq e^{-t}/|x|$ and $w(t, 0) \coloneqq \infty$ for $t \geq 0$ and $x\in \R^3\setminus \{0\}$, one chooses $g(t) = e^{-t}$ for $t\geq 0$. Since
	\begin{equation*}
	\frac{1}{|x|} = \sqrt{\frac{2}{\pi}} \int_0^\infty \mathrm du \, e^{-u^2|x|^2/2}
	\end{equation*}
	for all $x\in \R^3\setminus \{0\}$, one obtains the representation derived in \cite{MV19} of the Fröhlich polaron as a mixture of Gaussian measures.
\end{remark}

\section{Proof of the functional central limit theorem}
\label{Section: Functional central limit theorem}
We equip $C(\R, \R^d)$ with the topology of locally uniform convergence. A function $f:\R^n \to \R\cup \{\infty\}$ is called quasiconcave (quasiconvex) if for all $a\in \R \cup \{\infty\}$ the superlevel set $f^{-1}([a, \infty])$ is convex (the sublevel set $f^{-1}((-\infty, a])$ is convex). Provided that $w(t, \cdot)$ is quasiconcave for all $t>0$ and that Conditions (A1)-(A3) are met,
we are going to show that the distribution of $X^n$ under $\mathbb P_{\alpha, \infty}$ converges weakly to the distribution of a centered Gaussian process with stationary and independent increments. In case that $w(t, \cdot)$ is rotationally symmetric for all $t>0$, this Gaussian process is a rescaled Brownian motion.\\
\\
We start by showing tightness of $\{\mathbb P_{\alpha, \infty} \circ (X^n)^{-1}: \, n\in \N\}$. For $n \in \N$ we define the modulus of continuity by
\begin{equation*}
\omega_n: C(\R, \R^d) \times (0, \infty) \to [0, \infty), \quad \omega_n(\mathbf x, \delta) \coloneqq \sup_{s, t \in [-n, n], |s-t| < \delta} |\mathbf x_{s, t}|.
\end{equation*}
The Gaussian correlation inequality \cite{article} states that for all convex sets $A_1, A_2 \subseteq \R^n$ that are symmetric about the origin and any centered Gaussian measure $\mu$ on $\R^n$
\begin{equation*}
\mu(A_1 \cap A_2) \geq \mu(A_1) \mu(A_2) .
\end{equation*}
It is well known that this implies $\mathbb E_{\mu}[fg] \geq \mathbb E_{\mu}[f] \mathbb E_{\mu}[g]$ for all non-negative, symmetric, quasiconcave functions $f, g$ and any centered Gaussian measure $\mu$ on $\R^n$. We generalize this and obtain the following proposition:

\begin{prop}
	\label{Gaussian correlation inequality}
	Let $X$ be a $n$-dimensional centered Gaussian vector.
	\begin{enumerate}
		\item If $f_1, \hdots, f_k: \R^n \to [0, \infty]$ are symmetric (with respect to point reflections in the origin) and quasiconcave and $f_{k+1}:\R^n \to [0, \infty]$ is symmetric and quasiconvex then 
		\begin{equation*}
		\mathbb E\Big[\prod_{i=1}^{k+1} f_i(X) \Big] \leq \mathbb E\Big[\prod_{i=1}^k f_i(X)\Big]  \cdot \mathbb E[f_{k+1}(X)].
		\end{equation*}
		\item If $f_1, \hdots, f_m:\R^n \to [0, \infty]$ are symmetric and quasiconcave then
		\begin{equation*}
		\mathbb E\Big[\prod_{i=1}^m f_i(X)\Big] \geq \prod_{i=1}^{k} \mathbb E\Big[\prod_{j\in J_i} f_j(X)\Big].
		\end{equation*}
		for any partition $\{1, \hdots, m\} = J_1 \dot \cup \hdots \dot \cup J_k$.
	\end{enumerate}
\end{prop}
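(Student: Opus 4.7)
The strategy for both parts is to reduce to the classical Gaussian correlation inequality via the layer cake representation. If $f:\R^n \to [0,\infty]$ is symmetric (meaning $f(-x)=f(x)$) and quasiconcave, then for each $s>0$ the superlevel set $\{f>s\}$ is symmetric and convex, and
\[
f(x) = \int_0^\infty \mathbf 1_{\{f>s\}}(x)\, \mathrm ds.
\]
Writing $\mu$ for the law of $X$ (a centered Gaussian measure on $\R^n$), the classical Gaussian correlation inequality applies to any finite collection of symmetric convex sets.

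\textbf{Part (2).} For each block $J_i$, Tonelli gives
\[
\prod_{j\in J_i} f_j(x) = \int_{(0,\infty)^{|J_i|}} \mathbf 1_{B_i(\vec s_i)}(x)\, \mathrm d\vec s_i,
\qquad B_i(\vec s_i) := \bigcap_{j\in J_i} \{f_j > s_{i,j}\},
\]
and each $B_i(\vec s_i)$ is a symmetric convex set. Consequently,
\[
\mathbb E\!\left[\prod_{i=1}^m f_i(X)\right]
 = \int \mu\!\left(\bigcap_{i=1}^k B_i(\vec s_i)\right)\prod_{i=1}^k \mathrm d\vec s_i.
\]
Applying the Gaussian correlation inequality iteratively to $B_1(\vec s_1),\dots,B_k(\vec s_k)$ yields $\mu(\bigcap_i B_i(\vec s_i)) \geq \prod_i \mu(B_i(\vec s_i))$, and Tonelli then gives
\[
\mathbb E\!\left[\prod_{i=1}^m f_i(X)\right] \geq \prod_{i=1}^k \int \mu(B_i(\vec s_i))\, \mathrm d\vec s_i = \prod_{i=1}^k \mathbb E\!\left[\prod_{j\in J_i} f_j(X)\right].
\]

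\textbf{Part (1).} Write $g := \prod_{i=1}^k f_i$ and, using the layer cake representation for $f_{k+1}$,
\[
\mathbb E\!\left[\prod_{i=1}^{k+1} f_i(X)\right] = \int_0^\infty \mathbb E\bigl[g(X)\mathbf 1_{\{f_{k+1}>t\}}(X)\bigr]\, \mathrm dt.
\]
Since $f_{k+1}$ is symmetric and quasiconvex, the sublevel set $C_t := \{f_{k+1}\leq t\}$ is symmetric convex. It therefore suffices to prove
\[
\mathbb E[g(X)\mathbf 1_{C_t}(X)] \geq \mathbb E[g(X)]\,\mu(C_t),
\]
which by the layer cake expansion of $g$ reduces to
\[
\mu\!\left(C_t \cap \bigcap_{i=1}^k \{f_i>s_i\}\right) \geq \mu(C_t)\,\mu\!\left(\bigcap_{i=1}^k \{f_i>s_i\}\right),
\]
again a direct consequence of the Gaussian correlation inequality applied to the symmetric convex sets $C_t,\{f_1>s_1\},\dots,\{f_k>s_k\}$. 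Integrating in $t$ and rearranging gives $\mathbb E[g(X)\mathbf 1_{\{f_{k+1}>t\}}] \leq \mathbb E[g(X)]\mu(\{f_{k+1}>t\})$ for each $t$, and integrating once more yields the claim.

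The only conceptual obstacle is that a product of symmetric quasiconcave functions need not be quasiconcave, so the classical inequality $\mathbb E[fg]\geq \mathbb E[f]\mathbb E[g]$ for two such factors cannot be iterated naively. The layer cake representation bypasses this issue by expressing all expectations as integrals of $\mu$-measures of intersections of symmetric convex sets, to which the Gaussian correlation inequality applies in arbitrary multiplicity. The integrability issues caused by possibly infinite values of the $f_i$ are harmless since all integrands are non-negative and all applications of Tonelli are with respect to $[0,\infty]$-valued functions.
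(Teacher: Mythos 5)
Your proposal is correct and is essentially the paper's own argument: both proofs rest on the layer cake representation of the $f_i$ together with Royen's Gaussian correlation inequality applied to intersections of symmetric convex superlevel sets (with the sublevel set of the quasiconvex factor handled by complementation), and your iterated application of the two-set inequality in part (2) is exactly what the paper means by ``similarly''. The only difference is cosmetic: in part (1) you pass to the complement at the level of expectations, i.e.\ you subtract $\mathbb E\big[g(X)\1_{C_t}(X)\big]$ from $\mathbb E[g(X)]$, which tacitly requires $\mathbb E\big[\prod_{i=1}^k f_i(X)\big]<\infty$ (the infinite case being trivial since the right-hand side is then $+\infty$ unless $f_{k+1}=0$ a.s.), whereas the paper avoids any subtraction by complementing inside the probability, using that $\mathbb P(X\in A_1,\,X\in A_2)\leq \mathbb P(X\in A_1)\mathbb P(X\in A_2)$ whenever $A_1$ is symmetric convex and $A_2$ is symmetric with convex complement.
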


\begin{proof}
	We only show the first statement, the proof of the second statement can be conducted similarly. 
	As a direct consequence of the Gaussian correlation inequality, one obtains for $A_1 \subseteq \R^n$ symmetric and convex and $A_2 \subseteq \R^n$ symmetric such that $A_2^c$ is convex
	\begin{equation*}
	\mathbb P(X\in A_1, X \in A_2) \leq \mathbb P(X\in A_1) \mathbb P(X \in A_2) .
	\end{equation*}
	We write
	\begin{equation*}
	f_i(X) = \int_0^\infty \1_{[0, f_i(X)]}(s) \, \mathrm d s, \quad \text{ for }1\leq i \leq k, \quad f_{k+1}(X) = \int_0^\infty \1_{[0, f_{k+1}(X))}(s) \, \mathrm d s.
	\end{equation*}
	Now, for all $s_1, \hdots, s_{k+1}>0$ the sets $\bigcap_{i = 1}^k f_{i}^{-1}([s_i, \infty])$ and $(f_{k+1}^{-1}((s_{k+1}, \infty]))^c$ are symmetric and convex. We hence get with the Gaussian correlation inequality
	\begin{align*}
	&\mathbb E\Big[\prod_{i=1}^{k+1} f_i(X)\Big]\\
	&= \int_0^\infty \hdots \int_0^\infty \mathbb P(f_1(X)\geq s_1, \hdots, f_{k+1}(X) > s_{k+1}) \, \mathrm ds_1 \hdots \mathrm ds_{k+1}  \\
	& \leq \int_0^\infty \hdots \int_0^\infty \mathbb P(f_1(X)\geq s_1, \hdots, f_{k}(X) \geq s_{k}) \mathbb P(f_{k+1}(X) > s_{k+1})  \, \mathrm ds_1 \hdots \mathrm ds_{k+1}  \\
	&= \mathbb E\Big[\prod_{i=1}^{k} f_i(X)\Big] \cdot \mathbb E[f_{k+1}(X)].	\qedhere
	\end{align*}
\end{proof}

\begin{cor}
	\label{Covariance estimate} Assume that (A1) holds and that $w(t, \cdot)$ is quasiconcave for all $t>0$.
	Then for all $\xi\in \mathbf N_f(\triangle)$ with $F(\xi)<\infty$ and all $a<b$
	\begin{equation*}
	\mathbf E_{\xi}\big[|X_{a, b}|^2\big] \leq \mathbb E_{\mathcal W}\big[|X_{a, b}|^2\big] = d(b-a). 
	\end{equation*}
\end{cor}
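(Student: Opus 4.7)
The plan is to write $\mathbf{E}_\xi[|X_{a,b}|^2]$ as a ratio of Gaussian expectations and then apply Proposition \ref{Gaussian correlation inequality}.(1) with $|\cdot|^2$ playing the role of the quasiconvex function. For $\xi = \sum_{i=1}^n \delta_{(s_i,t_i)}$ with $F(\xi) < \infty$, the definition of $\mathbf{P}_\xi$ gives
\begin{equation*}
\mathbf{E}_\xi[|X_{a,b}|^2] = \frac{1}{F(\xi)}\, \mathbb{E}_{\mathcal{W}}\!\left[ |X_{a,b}|^2 \prod_{i=1}^n v(t_i - s_i, X_{s_i,t_i}) \right].
\end{equation*}
Under $\mathcal{W}$ the joint vector $Y := (X_{s_1,t_1}, \ldots, X_{s_n,t_n}, X_{a,b})$ is a centered Gaussian vector in $\R^{(n+1)d}$, since every coordinate is a linear functional of the two-sided Brownian motion. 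Hence the numerator above is a Gaussian expectation, which is the setting required for Proposition \ref{Gaussian correlation inequality}.

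Next I would verify the structural assumptions needed for the inequality. Decomposing $w(t, x) = g(t) v(t, x)$ with $g(t) > 0$, assumption (A1) and the quasiconcavity of $w(t, \cdot)$ transfer to $v(t, \cdot)$: for each $t$, the function $v(t, \cdot)$ is symmetric about the origin and quasiconcave. Viewed as functions of $Y \in \R^{(n+1)d}$, the maps $f_i(Y) := v(t_i - s_i, Y_i)$ for $i = 1, \ldots, n$ depend only on a single coordinate block, so their superlevel sets are cylinders over convex subsets of $\R^d$; thus $f_1, \ldots, f_n$ are symmetric and quasiconcave on $\R^{(n+1)d}$. Similarly $f_{n+1}(Y) := |Y_{n+1}|^2$ is symmetric, and its sublevel sets are cylinders over balls in $\R^d$, so $f_{n+1}$ is quasiconvex.

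With these verifications in hand, Proposition \ref{Gaussian correlation inequality}.(1) (applied to the Gaussian vector $Y$, with $k = n$ and $f_{k+1} = f_{n+1}$) yields
\begin{equation*}
\mathbb{E}_{\mathcal{W}}\!\left[ |X_{a,b}|^2 \prod_{i=1}^n v(t_i - s_i, X_{s_i,t_i}) \right] \leq \mathbb{E}_{\mathcal{W}}[|X_{a,b}|^2] \cdot \mathbb{E}_{\mathcal{W}}\!\left[ \prod_{i=1}^n v(t_i - s_i, X_{s_i,t_i}) \right].
\end{equation*}
Dividing by $F(\xi)$ (which is the second factor on the right) gives $\mathbf{E}_\xi[|X_{a,b}|^2] \leq \mathbb{E}_{\mathcal{W}}[|X_{a,b}|^2]$. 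Finally $\mathbb{E}_{\mathcal{W}}[|X_{a,b}|^2] = d(b-a)$ is the standard variance of a $d$-dimensional Brownian increment over an interval of length $b-a$.

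There is no real obstacle here: the only point that requires a brief check is that quasiconcavity of a function of one coordinate block lifts to quasiconcavity as a function on the full product space, which is immediate from the cylinder structure of level sets. Everything else is a direct application of the Gaussian correlation machinery established in Proposition \ref{Gaussian correlation inequality}.
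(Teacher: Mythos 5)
Your proof is correct and follows essentially the same route as the paper: apply Proposition \ref{Gaussian correlation inequality}(1) to the centered Gaussian vector $(X_{s_1,t_1},\hdots,X_{s_n,t_n},X_{a,b})$, with the cylinder functions $v(t_i-s_i,\cdot)$ as the symmetric quasiconcave factors and $|x_{n+1}|^2$ as the symmetric quasiconvex one, then divide by $F(\xi)$. The extra verifications you spell out (symmetry and quasiconcavity passing from $w(t,\cdot)$ to $v(t,\cdot)$, and lifting along coordinate blocks) are exactly the implicit checks behind the paper's one-line argument.
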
	
\begin{proof}
	For $\xi = \sum_{i=1}^k \delta_{(s_i, t_i)} \in \mathbf N_f(\triangle)$ with $F(\xi)<\infty$ we apply Proposition \ref{Gaussian correlation inequality} to the quasiconvex function $f_{k+1}:\mathbb R^{dk+d} \to [0, \infty)$, $f_{k+1}(x_1, \hdots, x_{k+1}) \coloneqq |x_{k+1}|^2$, the quasiconcave functions $f_1, \hdots, f_k:\mathbb R^{dk+d} \to (0, \infty]$ defined by
	\begin{equation*}
	f_i(x_1, \hdots, x_{k+1}) \coloneqq v(t_i-s_i, x_i),
	\end{equation*}
	for $1 \leq i \leq k$ and the Gaussian vector $(X_{s_1, t_1}, \hdots, X_{s_k, t_k}, X_{a, b})$.	
\end{proof}

\begin{remark}
	 Assume that (A1) and (A2) hold. If $w(t, \cdot)$ is quasiconcave for all $t>0$ then Proposition \ref{Gaussian correlation inequality} implies that for $\xi_1, \xi_2 \in \mathbf N_f(\triangle)$
	\begin{equation*}
		F(\xi_1 + \xi_2) \geq F(\xi_1) F(\xi_2)
	\end{equation*}
	i.e. in that sense $\log(F)$ is superadditive. In particular, if \eqref{Our condition}
	holds, then the interaction energy between left and right half axis satisfies
	\begin{equation}
	\label{Equation: Finite interaction energy}
		\mathbb E_\mathcal W\bigg[\exp\bigg( \alpha \int_{-\infty}^0\int_{0}^\infty w(t-s, X_{s, t}) \, \mathrm dt \mathrm ds \bigg)\bigg] < \infty.
	\end{equation}
	In order to see this, we write the Poisson point process $\eta \sim \Gamma_{\alpha, T}$ as the sum $\eta = \eta_1 + \eta_2 + \eta_3$ where $\eta_1, \eta_2, \eta_3$ are independent Poisson point processes with intensity measures 
	\begin{align*}
		  \mu^1_{\alpha, T}(\mathrm ds\mathrm d t) &\coloneqq \alpha \cdot g(t-s) \mathds 1_{\{-T < s < t < 0 \}}\mathrm d s \mathrm dt \\
		\mu^2_{\alpha, T}(\mathrm ds\mathrm d t) &\coloneqq \alpha \cdot g(t-s) \mathds 1_{\{0 < s < t < T\}}\mathrm d s \mathrm dt \\
		 \mu^3_{\alpha, T}(\mathrm ds\mathrm d t) &\coloneqq \alpha \cdot g(t-s) \mathds 1_{\{-T < s < 0 < t < T\}}\mathrm d s \mathrm dt
	\end{align*}
	respectively. Then
	\begin{align*}
		\mathbf Z_{\alpha, 2T} &= \mathbb E[F(\eta_1 + \eta_2 + \eta_3)] \\		
		 &\geq \mathbb E[F(\eta_1)] \mathbb E[F(\eta_2)] \mathbb E[F(\eta_3)] = \mathbf Z_{\alpha, T}^2 \mathbb E[F(\eta_3)].
	\end{align*}
	By the same calculations as in Section \ref{Section: Outline} we have
	\begin{equation*}
		\mathbb E[F(\eta_3)] = e^{-b_{\alpha, T}} \mathbb E_\mathcal W\bigg[\exp\bigg( \alpha \int_{-T}^0\int_{0}^T w(t-s, X_{s, t}) \, \mathrm dt \mathrm ds \bigg) \bigg]
	\end{equation*}
	with $b_{\alpha, T} = \mu_{\alpha, T}^3(\triangle)$ and the statement follows by the monotone convergence theorem and boundedness of $T \mapsto b_{\alpha, T}$. Notice that
	\begin{equation*}
		\frac{Z_{\alpha, T}^2}{Z_{\alpha, 2T}} = \mathbb E_{\alpha, T}\bigg[\exp\bigg(-\alpha \int_{-T}^0\int_{0}^T w(t-s, X_{s, t}) \, \mathrm dt \mathrm ds \bigg) \bigg]
	\end{equation*}
	Therefore, heuristically we would expect \eqref{Our condition} to hold if and only if (assuming an infinite volume measure $\mathbb P_{\alpha, \infty}$ exists)
	\begin{equation*}
		\mathbb E_{\alpha, \infty}\bigg[\exp\bigg(-\alpha \int_{-\infty}^0\int_{0}^\infty w(t-s, X_{s, t}) \, \mathrm dt \mathrm ds \bigg) \bigg] >0
	\end{equation*}
	i.e. $\mathbb P_{\alpha, \infty}\big(\int_{-\infty}^0\int_{0}^\infty w(t-s, X_{s, t}) \, \mathrm dt \mathrm ds < \infty \big) > 0$; it would be interesting to have a rigorous proof (or counterexample) for this connection, as well as some understanding how it relates to Equation \eqref{Equation: Finite interaction energy}.
\end{remark}

\begin{lemma}
	\label{modulus of continuity estimate}
	Assume that (A1) holds and that $w(t, \cdot)$ is quasiconcave for all $t>0$. Then for all $n\in \N$, $\varepsilon, \delta>0$ and $\xi \in \mathbf N_f(\triangle)$ with $F(\xi)<\infty$
	\begin{equation*}
	\mathbf P_{\xi}\big( \{\mathbf x\in C(\R, \R^d): \, \omega_n(\mathbf x, \delta)  > \varepsilon\}\big) \leq \mathcal W\big( \{\mathbf x\in C(\R, \R^d): \, \omega_n(\mathbf x, \delta)  > \varepsilon\}\big).
	\end{equation*}
\end{lemma}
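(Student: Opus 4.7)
The plan is to reduce the lemma to the Gaussian correlation ingredient already provided by Proposition \ref{Gaussian correlation inequality}(2), after a finite-dimensional approximation. By taking complements, the desired estimate is equivalent to $\mathbf P_\xi(\omega_n(X,\delta)\leq\varepsilon)\geq \mathcal W(\omega_n(X,\delta)\leq\varepsilon)$. Writing $\xi = \sum_{i=1}^k \delta_{(s_i,t_i)}$ and unfolding the definition of $\mathbf P_\xi$, this is the correlation estimate
\begin{equation*}
\mathbb E_{\mathcal W}\bigg[\prod_{i=1}^k v(t_i-s_i,X_{s_i,t_i})\cdot \mathds 1_E\bigg]\geq F(\xi)\cdot \mathcal W(E), \qquad E := \{\omega_n(X,\delta)\leq\varepsilon\}.
\end{equation*}

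To reduce to a finite-dimensional Gaussian, I would pick an increasing sequence $(\Pi_N)_N$ of finite subsets of $[-n,n]$ containing $\{s_i,t_i: 1\leq i\leq k\}$ with $\bigcup_N \Pi_N$ dense in $[-n,n]$, and set
\begin{equation*}
\omega_n^{\Pi_N}(\mathbf x,\delta) := \max\{|\mathbf x_u - \mathbf x_{u'}|:\, u,u'\in\Pi_N,\, |u-u'|<\delta\}.
\end{equation*}
For continuous $\mathbf x$, approximating any admissible pair $s,t$ by nearby grid points gives $\omega_n^{\Pi_N}(\mathbf x,\delta)\nearrow \omega_n(\mathbf x,\delta)$, so $\mathds 1\{\omega_n^{\Pi_N}\leq\varepsilon\}\searrow \mathds 1_E$ almost surely under both $\mathcal W$ and $\mathbf P_\xi$. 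The discretised event $\{\omega_n^{\Pi_N}\leq\varepsilon\}$ is the intersection, over pairs $(u,u')\in\Pi_N^2$ with $|u-u'|<\delta$, of the symmetric convex sets $\{|X_u - X_{u'}|\leq\varepsilon\}$ in the Gaussian vector $(X_u)_{u\in\Pi_N}$, so each pair-indicator is a symmetric quasiconcave function on $\R^{d|\Pi_N|}$. Each factor $v(t_i-s_i, X_{t_i}-X_{s_i})$ is, as a function on $\R^{d|\Pi_N|}$, the composition of the symmetric quasiconcave function $v(t_i-s_i,\cdot)$ (symmetric by (A1); quasiconcave because $w(t,\cdot) = g(t) v(t,\cdot)$ is quasiconcave and $g>0$) with a linear map, hence again symmetric and quasiconcave.

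I would then apply Proposition \ref{Gaussian correlation inequality}(2) with the partition that groups the $k$ factors $v(t_i-s_i, X_{s_i,t_i})$ into one block and the finitely many pair-indicators into the other; this gives
\begin{equation*}
\mathbb E_{\mathcal W}\bigg[\prod_{i=1}^k v(t_i-s_i,X_{s_i,t_i}) \cdot \mathds 1\{\omega_n^{\Pi_N}\leq\varepsilon\}\bigg] \geq F(\xi)\cdot \mathcal W(\omega_n^{\Pi_N}\leq\varepsilon).
\end{equation*}
Dividing by $F(\xi)\in(0,\infty)$ and letting $N\to\infty$, dominated convergence on both sides (the indicators are bounded by $1$) gives $\mathbf P_\xi(E)\geq \mathcal W(E)$, which is the desired estimate. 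I do not expect a substantive obstacle: the only point requiring care is verifying that both the pair-indicators and the $v$-factors remain symmetric and quasiconcave after being viewed as functions on the common finite-dimensional Gaussian state space $\R^{d|\Pi_N|}$, and this is immediate from (A1), the quasiconcavity hypothesis on $w$, and the invariance of these two properties under composition with a linear map.
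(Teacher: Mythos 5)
Your proof is correct and follows essentially the same route as the paper: both reduce to finitely many time points and apply the generalized Gaussian correlation inequality of Proposition \ref{Gaussian correlation inequality} (you use part (2) on the complementary event $\{\omega_n \leq \varepsilon\}$, the paper applies part (1) with the quasiconvex indicator of the bad event), followed by a monotone limiting argument in the number of grid points. One small repair: the cluster points $s_i, t_i$ need not lie in $[-n,n]$, so you cannot require $\{s_i, t_i\} \subseteq \Pi_N \subseteq [-n,n]$; instead adjoin the increments $X_{s_i, t_i}$ as additional coordinates of the finite-dimensional Gaussian vector (as the paper does with its vector $Y$), after which your argument goes through unchanged.
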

\begin{proof}
	We enumerate
	\begin{equation*}
	[-n, n] \cap \Q  = \{q_1, q_2, q_3, \hdots\}.
	\end{equation*}
	Let $\xi = \sum_{i=1}^m \delta_{(s_i, t_i)} \in \mathbf N_f(\triangle)$ with $F(\xi)<\infty$ and $k\in \N$. We define $f_1, \hdots, f_{m}: \R^{d(k^2+m)} \to (0, \infty]$ by
	\begin{equation*}
	f_i( x_{11},  x_{12},  \hdots,  x_{kk},  y_{1}, \hdots, y_{m}) \coloneqq v(t_i - s_i, y_i)
	\end{equation*}
	for all $1\leq i \leq m$ and  $f_{m+1}: \R^{d(k^2+m)} \to [0, \infty)$ by
	\begin{equation*}
	f_{m+1}(x_{11}, x_{12},  \hdots, x_{kk}, y_{1}, \hdots,  y_{m})
	\coloneqq
	\begin{cases}
	1 \text{ if } &\exists i,j \in \{1, \hdots, k\}: \,  | x_{ij}|>\varepsilon \\
	&\text{ and }|q_i - q_j|< \delta \\
	0 \text{ else}
	\end{cases}.
	\end{equation*}
	Then $f_{m+1}$ is symmetric and quasiconvex and $f_1, \hdots, f_m$ are symmetric and quasiconcave. If we define
	\begin{equation*}
	Y \coloneqq (X_{q_1, q_1}, \hdots, X_{q_1, q_k}, X_{q_2, q_1}, \hdots,  X_{q_k, q_k}, X_{s_1, t_1}, \hdots, X_{s_m, t_m})
	\end{equation*}
	we hence get with the Gaussian correlation inequality
	\begin{align*}
	&\mathbf P_{\xi}\big( \big\{\mathbf x\in C(\R, \R^d): \, (\exists i, j\in \{1, \hdots, k\}: |q_i - q_j| < \delta \text{ and }|\mathbf x_{q_i, q_j}| > \varepsilon  )\big\}\big) \\
	&= \frac{1}{F(\xi)}\mathbb E_\mathcal W\Big[\prod_{i=1}^{m+1} f_i(Y)\Big] \\
	&\leq \mathbb E_\mathcal W[f_{m+1}(Y)]\\
	&= \mathcal W\big( \{\mathbf x\in C(\R, \R^d): \, (\exists i, j\in \{1, \hdots, k\}: |q_i - q_j| < \delta \text{ and }|\mathbf x_{q_i, q_j}| > \varepsilon )\big\}\big).
	\end{align*}
	Writing 
	\begin{align*}
	&\{\mathbf x\in C(\R, \R^d): \, \omega_n(\mathbf x, \delta)  > \varepsilon\} \\
	&= \bigcup_{k=1}^\infty \big\{\mathbf x\in C(\R, \R^d): \, (\exists i, j\in\{1, \hdots, k\}: |q_i - q_j| < \delta \text{ and }|\mathbf x_{q_i, q_j}| > \varepsilon  )\big\}
	\end{align*}
	and using continuity of measures from below yields the claim.		
\end{proof}
\begin{lemma}
	Assume that $w(t, \cdot)$ is quasiconcave for all $t>0$ and that Conditions (A1)-(A3) are met. 
	Then
	the family of measures $\big\{\mathbb P_{\alpha, \infty} \circ (X^n)^{-1}: \, n\in \N\big\}$ is tight.
\end{lemma}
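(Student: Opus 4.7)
The plan is to reduce tightness on $C(\mathbb R,\mathbb R^d)$ (locally uniform topology) to the Brownian case by combining the mixture representation from Theorem \ref{Theorem: Existence infinite volume measure, long version}, the pathwise modulus-of-continuity domination in Lemma \ref{modulus of continuity estimate}, and Brownian scale invariance. By Arzelà--Ascoli applied on each compact interval, since $X^n_0=0$ for all $n$, it suffices to prove that for every $N\in \mathbb N$ and every $\varepsilon>0$,
\[
\lim_{\delta\to 0}\,\limsup_{n\to\infty}\,\mathbb P_{\alpha,\infty}\bigl(\omega_N(X^n,\delta)>\varepsilon\bigr)=0.
\]

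The first step is a scaling change of variables: since $X^n_t=\tfrac{1}{\sqrt n}X_{nt}$ one checks directly that $\omega_N(X^n,\delta)=\tfrac{1}{\sqrt n}\,\omega_{nN}(X,n\delta)$, and in particular the event $\{\omega_N(X^n,\delta)>\varepsilon\}$ belongs to $\mathcal A_{-nN}^{nN}$. Theorem \ref{Theorem: Existence infinite volume measure, long version} then yields
\[
\mathbb P_{\alpha,\infty}\bigl(\omega_N(X^n,\delta)>\varepsilon\bigr)=\int_{\mathbf N(\triangle)}\widehat{\Gamma}_{\alpha,\operatorname{st}}(\mathrm d\xi)\,\mathbf P_{R_{-nN}^{nN}\xi}\bigl(\omega_{nN}(X,n\delta)>\sqrt n\,\varepsilon\bigr).
\]

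The key step is to invoke Lemma \ref{modulus of continuity estimate} inside this integral. For $\widehat{\Gamma}_{\alpha,\operatorname{st}}$-almost every $\xi$, the restriction $R_{-nN}^{nN}\xi$ lies in $\mathbf N_f(\triangle)$ (only finitely many renewal periods intersect a compact interval under the alternating construction) and satisfies $F(R_{-nN}^{nN}\xi)<\infty$ (since $\widehat{\Gamma}_{\alpha,\operatorname{st}}$ is built by alternating independent $\widehat{\Xi}_\alpha$-distributed clusters, for each of which $F$ is a.s.\ finite, and $F$ is multiplicative across non-overlapping clusters). Thus the lemma dominates the inner probability \emph{uniformly in $\xi$} by $\mathcal W(\omega_{nN}(X,n\delta)>\sqrt n\,\varepsilon)$. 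By Brownian scale invariance, under $\mathcal W$ the process $X^n$ has the same law as $X$, so
\[
\mathcal W\bigl(\omega_{nN}(X,n\delta)>\sqrt n\,\varepsilon\bigr)=\mathcal W\bigl(\omega_N(X,\delta)>\varepsilon\bigr),
\]
which is independent of $n$ and tends to $0$ as $\delta\to 0$ by the standard a.s.\ continuity of Brownian paths on $[-N,N]$.

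The argument is essentially self-executing once these pieces are in place; the real work has already been done in Lemma \ref{modulus of continuity estimate} via the Gaussian correlation inequality, which is where quasiconcavity of $w(t,\cdot)$ is crucial. The only technical point worth spelling out is the $\widehat{\Gamma}_{\alpha,\operatorname{st}}$-a.s.\ finiteness of $F$ on the restricted configuration $R_{-nN}^{nN}\xi$, which follows directly from the explicit description of $\widehat{\Gamma}_{\alpha,\operatorname{st}}$ as an alternating stationary renewal process with cluster law $\widehat{\Xi}_\alpha$ having total mass one. Beyond this bookkeeping, I anticipate no substantive obstacle.
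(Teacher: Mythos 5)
Your proposal is correct and follows essentially the same route as the paper: rescale the modulus of continuity, use the mixture representation over $\widehat{\Gamma}_{\alpha,\operatorname{st}}$, dominate the inner probability uniformly in $\xi$ by the Wiener measure via Lemma \ref{modulus of continuity estimate}, and conclude by Brownian scaling and tightness of a single measure. Your extra remark on the $\widehat{\Gamma}_{\alpha,\operatorname{st}}$-a.s.\ finiteness of $F(R_{-nN}^{nN}\xi)$ is a correct bookkeeping detail that the paper leaves implicit.
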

\begin{proof}
	As $\mathbb P_{\alpha, \infty}(X^n_0 = 0) = 1$ for all $n\in \N$ the tightness is equivalent to
	\begin{equation*}
	\forall m\in \N\, \forall \eta>0\, \forall \varepsilon>0 \,\exists \delta>0\, \forall n\in \N: \, \mathbb P_{\alpha, \infty}(\omega_m(X^n, \delta)> \varepsilon) < \eta.
	\end{equation*}
	By Lemma \ref{modulus of continuity estimate}, we have for $n, m\in \N$ and $\varepsilon, \delta >0$
	\begin{align*}
	\mathbb P_{\alpha, \infty}(\omega_m(X^n, \delta) > \varepsilon) &= \mathbb P_{\alpha, \infty}(\omega_{mn}(X, \delta n) >  \sqrt{n}\varepsilon) \\
	&= \mathbb E_{\alpha}\big[ \mathbf P_{R_0^{mn} \hat \eta_s}\big(\omega_{mn}(X, \delta n) >  \sqrt{n}\varepsilon\big) \big] \\
	&\leq \mathbb E_{\alpha}\big[ \mathcal W\big(\omega_{mn}(X, \delta n) >  \sqrt{n}\varepsilon\big) \big] \\
	&= \mathcal W\big(\omega_{mn}(X, \delta n) >  \sqrt{n}\varepsilon\big) \\
	&= \mathcal W(\omega_m(X, \delta) > \varepsilon)
	\end{align*}
	where we used Brownian scaling in the last step. The claim follows by the fact that a single probability measure on a Polish space is tight.
\end{proof}
In order to prove the functional central limit theorem, it is left to show convergence of the finite dimensional distributions. We will use the following lemma. 

\begin{lemma}
	\label{Mixing property}
	Assume Conditions (A1)-(A3).
	Let $s_1 < t_1 < s_2 < t_2 < \hdots < s_k < t_k$. Then, as $n \to \infty$, 
	\begin{equation*}
	\sup_{A_1, \hdots, A_k \in \mathcal B(\R^d)} \Big| \mathbb P_{\alpha, \infty}(X^n_{s_1, t_1} \in A_1, \hdots, X^n_{s_k, t_k} \in A_k) - \prod_{i=1}^k \mathbb P_{\alpha, \infty}(X^n_{s_i, t_i} \in A_i) \Big| \to 0.
	\end{equation*}	
\end{lemma}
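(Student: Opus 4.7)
The strategy is to reduce the lemma to a mixing statement for the stationary Gibbs measure $\widehat\Gamma_{\alpha,\operatorname{st}}$ on partitions, and then to exploit its alternating-renewal structure together with the product property of $\mathbf P_\xi$ along non-intersecting clusters. Write $I_i := [ns_i,nt_i]$, set $E := \{X^n_{s_i,t_i}\in A_i,\,1\le i\le k\}\in \mathcal A_{ns_1}^{nt_k}$, and apply Theorem \ref{Theorem: Existence infinite volume measure, long version} to get
$$\mathbb P_{\alpha,\infty}(E) = \int \widehat\Gamma_{\alpha,\operatorname{st}}(d\xi)\, \mathbf P_{R_{ns_1}^{nt_k}\xi}(E).$$
Let $H_n$ be the event that no cluster of $\xi$ intersects two different $I_i$. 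By the product structure of $\mathbf P_\xi$ along non-intersecting clusters (Section \ref{Section: Outline}), on $H_n$ one has
$$\mathbf P_{R_{ns_1}^{nt_k}\xi}(E) = \prod_{i=1}^k \mathbf P_{R_{ns_i}^{nt_i}\xi}\bigl(X^n_{s_i,t_i}\in A_i\bigr).$$
Hence it suffices to prove that the joint law of $(R_{ns_1}^{nt_1}\xi,\ldots,R_{ns_k}^{nt_k}\xi)$ under $\widehat\Gamma_{\alpha,\operatorname{st}}$ converges in total variation to the product of its marginals.

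For this partition-level mixing, introduce $G_n\subseteq H_n$, the event that every gap $(nt_i,ns_{i+1})$, $1\le i\le k-1$, contains a dormant moment, and on $G_n$ define the stopping times $\tau_i:=\inf\{t\ge nt_i:\xi\text{ is dormant at }t\}$ for the natural filtration $(\mathcal F_t)_t$. By Theorem \ref{Theorem: Existence infinite volume measure, long version}, $\widehat\Gamma_{\alpha,\operatorname{st}}$ is the stationary law of an alternating process with $\operatorname{Exp}(\psi(\alpha))$-distributed dormant periods and $\widehat\Xi_\alpha$-distributed clusters. Memorylessness of the exponential dormant-period distribution therefore gives that, conditional on $\mathcal F_{\tau_i}\cap\{\tau_i<\infty\}$, the post-$\tau_i$ process is an independent fresh copy of the alternating process started dormant at $\tau_i$. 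Iterating across $\tau_1<\cdots<\tau_{k-1}$, the portions $\xi|_{[\tau_{i-1},\tau_i]}$ ($i=1,\ldots,k$, with $\tau_0:=-\infty,\tau_k:=+\infty$) become mutually independent on $G_n$; since (on $G_n$) each $R_{ns_i}^{nt_i}\xi$ depends only on clusters contained in $[\tau_{i-1},\tau_i]$, this yields the desired factorization on $G_n$. Combined with the bound below for $\widehat\Gamma_{\alpha,\operatorname{st}}(G_n^c)$, an argument analogous to the proof of Proposition \ref{Proposition: local convergence of alternating process} upgrades this to total-variation convergence of the joint law to the product of marginals.

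It remains to show $\widehat\Gamma_{\alpha,\operatorname{st}}(G_n^c)\to 0$. On $G_n^c$, some single cluster of $\xi$ covers an entire gap, which has length at least $cn$ with $c:=\min_i(s_{i+1}-t_i)>0$. By stationarity of $\widehat\Gamma_{\alpha,\operatorname{st}}$ and a union bound, this is dominated by $(k-1)\,\widehat\Gamma_{\alpha,\operatorname{st}}(\text{the active period at time } 0\text{, if any, has length }>cn)$. Under $\widehat\Gamma_{\alpha,\operatorname{st}}$, the length of the cluster covering a fixed time is the size-biased distribution of $a(\hat\xi_1)$ under $\widehat\Xi_\alpha$, which is a proper distribution because $\hat{\mathbb E}_\alpha[a(\hat\xi_1)]\le\hat{\mathbb E}_\alpha[\hat T_1]<\infty$ by condition \eqref{goodness of alpha} (which holds under (A3) by Theorem \ref{Theorem: Equvialent conditions for goodness in the setting of path measures}). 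Hence $\widehat\Gamma_{\alpha,\operatorname{st}}(G_n^c)\to 0$, giving
$$\sup_{A_1,\ldots,A_k}\Bigl|\mathbb P_{\alpha,\infty}(E) - \prod_{i=1}^k \mathbb P_{\alpha,\infty}\bigl(X^n_{s_i,t_i}\in A_i\bigr)\Bigr| \le 2\,\widehat\Gamma_{\alpha,\operatorname{st}}(G_n^c)\to 0.$$

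The main obstacle will be the careful execution of the iterated strong-Markov/renewal decomposition at the stopping times $\tau_i$: one must verify that memorylessness genuinely yields, at each successive $\tau_i$, independence of future from past and distributional equality with a fresh dormant start, and that these decompositions compose across all $k-1$ intermediate stopping times to produce full joint independence of the portions $\xi|_{[\tau_{i-1},\tau_i]}$. Everything else — the product structure of $\mathbf P_\xi$, the tail estimate on the size-biased cluster length via (A3), and the reduction to the partition level — is essentially bookkeeping built on ingredients already developed in Section \ref{Alternating processes}.
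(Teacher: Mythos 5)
Your reduction to the partition level (via Theorem \ref{Theorem: Existence infinite volume measure, long version} and the product structure of $\mathbf P_\xi$ along non-intersecting clusters), and your estimate $\widehat\Gamma_{\alpha,\operatorname{st}}(G_n^c)\to 0$ via the length-biased cluster length and $\hat{\mathbb E}_\alpha[\hat a_1]\leq \hat{\mathbb E}_\alpha[\widehat T_1]<\infty$, are both sound, and the regenerative structure you invoke is indeed the engine of the paper's proof. The gap is in the factorization step. On $G_n$ the joint law of $(R_{ns_1}^{nt_1}\xi,\dots,R_{ns_k}^{nt_k}\xi)$ under $\widehat\Gamma_{\alpha,\operatorname{st}}$ is \emph{not} the product of its marginals: conditioning at a dormant moment $\tau_i\in(nt_i,ns_{i+1})$ does make the post-$\tau_i$ process an independent fresh copy started dormant, but the conditional law of $R_{ns_{i+1}}^{nt_{i+1}}\xi$ is then that of a freshly started process viewed in a window beginning $ns_{i+1}-\tau_i$ later, which differs from the stationary marginal by a boundary effect. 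That discrepancy is not controlled by $\widehat\Gamma_{\alpha,\operatorname{st}}(G_n^c)$, so your concluding bound $\leq 2\,\widehat\Gamma_{\alpha,\operatorname{st}}(G_n^c)$ is false as written; it would only be valid if the factorization on $G_n$ were exact.

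To make the error vanish you need two quantitative inputs that the event $G_n$ (a dormant moment \emph{somewhere} in each gap) does not supply: (i) the first regeneration after $nt_i$ must leave a stretch of diverging length before $ns_{i+1}$, i.e.\ you must control the forward recurrence time of the embedded renewal process, and (ii) over that long stretch you must apply the total-variation convergence to stationarity of Proposition \ref{convergence to stationarity} (as in the proof of Proposition \ref{Proposition: local convergence of alternating process}). You gesture at (ii) but never establish (i), and your final display drops both error terms; moreover your ``main obstacle'' paragraph locates the difficulty in the strong-Markov/memorylessness step, which is in fact the easy part. The paper's proof supplies exactly the missing ingredients: it conditions at the first renewal after $nt_{k-1}$, uses $\hat{\mathbb P}_\alpha(Y_n>T)<\varepsilon$ together with $n(s_k-t_{k-1})>2T$ and the bound $\|\hat{\mathbb P}_\alpha\circ(R_0^\infty\hat\eta_s)^{-1}-\hat{\mathbb P}_\alpha\circ(R_0^\infty\theta_t\hat\eta)^{-1}\|<\varepsilon$ to split off the last block up to an error $4\varepsilon$, and then inducts on $k$. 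Your argument becomes correct if you replace $G_n$ by the event that a regeneration occurs within a fixed window $T$ after each $nt_i$ (with $n\min_i(s_{i+1}-t_i)>2T$) and insert the stationarity estimate at each gap; as it stands, the key quantitative step is missing.
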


\begin{proof}
	Let $((\hat T_0, \hat \gamma), (\hat d_1, \hat \xi_1), (\hat d_2, \hat \xi_2), \hdots)$ be a sequence of independent $(0, \infty) \times \mathbf N_f(\triangle)$ valued random variables such that
	\begin{itemize}
		\item  For all $k\geq 1$ the random variable $(\hat d_k, \hat \xi_k)$ is $\operatorname{Exp}(\psi(\alpha)) \otimes  \widehat \Xi_\alpha$ distributed 
		\item $(\hat T_0, \hat \gamma)$ is as in Proposition \ref{convergence to stationarity} such that $R_0^\infty \hat \eta_s \overset{d}{=} \hat \gamma + \theta_{-\hat T_0} \hat \eta$ (where $\hat \eta$ denotes the process obtained by alternating dormant periods $\hat d_k$ and active clusters $\hat \xi_k$ (starting dormant) and $\hat \eta_s$ denotes a stationary version of $\hat \eta$).
	\end{itemize}
	By stationarity, we may assume w.l.o.g. that $s_1 = 0$. Let $A_1, \hdots, A_k \in \mathcal B(\R^d)$. For $n\in \N$, we define $f_n, g_n: \mathbf N_f(\triangle) \to [0, 1]$ by
	\begin{align*}
		f_n(\xi) &\coloneqq \mathbf P_{\xi}(X^n_{0, t_1} \in A_1, \hdots, X_{s_{k-1}, t_{k-1}}^n \in A_{k-1}) \\
		g_n(\xi) &\coloneqq \mathbf P_{\xi}(X_{0, t_{k}-s_k}^n \in A_{k}).
	\end{align*}
	For $j\geq 1$ let (as usual) $\hat T_j$ denote the sum of the $j$-th dormant and active period. 
	Let $(\hat S_j^s)_{j\geq 0} \coloneqq (\hat T_0, \hat T_0 + \hat T_1, \hdots )$ denote the embedded delayed renewal process of $R_0^\infty \hat \eta_s$ and, for $n\in \N$, 
	\begin{equation*}
		Y_n \coloneqq \inf\{\hat S^s_j - nt_{k-1}: \,j\geq 0 \text{ such that } \hat S^s_j >  nt_{k-1}\}
	\end{equation*}
	denote the forward recurrence time at time $nt_{k-1}$. Let $\varepsilon>0$. By Proposition \ref{convergence to stationarity} and the convergence of the distribution of $Y_n$ as $n\to \infty$ (it does not matter that the renewal process is delayed by $\hat T_0$, see e.g. \cite[p. 155]{asmussen2003applied}), there exists a $N\in \N$ and a $T>0$ such that for all $n\geq N$ and $t \geq T$
	\begin{align*}
		&\hat{\mathbb P}_\alpha\big(Y_n > T\big)< \varepsilon,\quad  n(s_{k} - t_{k-1}) > 2T, \,\\
		&\big\|\hat{ \mathbb P}_{\alpha} \circ (R_0^\infty \hat \eta_s)^{-1}  -
		\hat{\mathbb P}_{\alpha} \circ (R_0^\infty \theta_t \hat \eta)^{-1} \big\| < \varepsilon.
	\end{align*}
	As $n(s_{k} - t_{k-1}) -Y_n(\omega) > T$ for $\omega \in \{Y_n \leq T\}$ and $n\geq N$ and since
	\begin{equation*}
	\mathbb P_{\alpha, \infty}(X^n_{s_k, t_k} \in A_k) = \hat{\mathbb E}_\alpha\big[g_n(R_0^{n(t_k-s_k)} \hat \eta_s)\big]
	\end{equation*}
	conditioning with respect to the process up to the first renewal after $nt_{k-1}$ yields
	\begin{align*}
		&\Big| \mathbb P_{\alpha, \infty}(X^n_{0, t_1} \in A_1, \hdots, X^n_{s_k, t_k} \in A_k)\\
		&- \mathbb P_{\alpha, \infty}(X^n_{0, t_1} \in A_1, \hdots, X^n_{s_{k-1}, t_{k-1}} \in A_{k-1})\mathbb P_{\alpha, \infty}(X^n_{s_k, t_k} \in A_k)  \Big| \\
		&\leq 2 \varepsilon + \int_{\hat \Omega} \hat{\mathbb P}_\alpha(\mathrm d \omega)\, \1_{\{Y_n \leq T\}}(\omega) f_n(R_0^{nt_{k-1}} \hat \eta_s(\omega ))\\
		& \quad \quad \quad \quad \quad \quad \quad \cdot \Big|\hat{\mathbb E}_\alpha\big[g_n\big(R_{0}^{n(t_k -s_k)} \theta_{n(s_k -t_{k-1}) -Y_n(\omega)}  \, \hat \eta \big)\big] - \mathbb P_{\alpha, \infty}(X^n_{s_k, t_k} \in A_k) \Big| \\
		&\leq 4\varepsilon
	\end{align*}
	for all $n\geq N$. The claim follows by inductively applying this argument.
\end{proof}

For a proof of the following theorem by Rényi \cite{Rnyi1963} (for $d=1$) that can directly be generalized to higher dimensions we refer the reader to \cite[p. 346 f.]{Gut2013}.
\begin{thrm}[Anscombe-Rényi]
	\label{Reyni-Theorem}
	Let $(X_n)_n$ be an iid sequence of centered random vectors with $\mathbb E[|X_1|^2] < \infty$ and $(N_t)_t$ be a family of $\mathbb N$-valued random variables such that $N_t/t$ converges in probability to a constant $\theta>0$ as $t\to \infty$. Then
	\begin{equation*}
	\frac{1}{\sqrt{t}} \sum_{k=1}^{N_t} X_k \overset{d}{\rightarrow} \mathcal N(0, \theta \Sigma)
	\end{equation*}
	as $t\to \infty$ where $\Sigma \coloneqq \mathbb E[X_1 \cdot X_1^T]$.	
\end{thrm}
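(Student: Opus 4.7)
The plan is to reduce the statement to a comparison between the random-index partial sum $S_{N_t} := X_1 + \cdots + X_{N_t}$ and the deterministic-index partial sum $S_{M_t}$ where $M_t := \lfloor \theta t \rfloor$. By the Cram\'er--Wold device, it suffices to establish the one-dimensional case: for any $v \in \mathbb R^d$ the scalars $\langle v, X_k\rangle$ are iid, centered, with variance $v^T\Sigma v$, and the projection $\langle v, t^{-1/2}S_{N_t}\rangle$ equals $t^{-1/2}\sum_{k=1}^{N_t}\langle v, X_k\rangle$. From now on I assume $d=1$ with $\sigma^2 := \mathbb E[X_1^2] < \infty$.

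The classical Lindeberg--L\'evy CLT applied along the deterministic subsequence $(M_t)_{t \geq 0}$ yields $S_{M_t}/\sqrt{t} \overset{d}{\to} \mathcal N(0,\theta\sigma^2)$. By Slutsky's theorem, the Anscombe--R\'enyi statement then reduces to the random-centering estimate
\[
\frac{S_{N_t} - S_{M_t}}{\sqrt{t}} \overset{\mathbb P}{\to} 0 \qquad \text{as } t \to \infty.
\]
For fixed $\varepsilon > 0$ and arbitrary $\delta > 0$ I decompose
\[
\mathbb P\bigl(|S_{N_t}-S_{M_t}|>\varepsilon\sqrt{t}\bigr) \leq \mathbb P\bigl(|N_t-M_t|>\delta t\bigr) + \mathbb P\Bigl(\max_{|k-M_t|\leq \delta t}|S_k-S_{M_t}|>\varepsilon\sqrt{t}\Bigr).
\]
The first term vanishes as $t \to \infty$ by the hypothesis $N_t/t \overset{\mathbb P}{\to}\theta$. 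The second is bounded, via Kolmogorov's maximal inequality applied separately to the forward and backward random walks $(S_{M_t+j}-S_{M_t})_{j \geq 0}$ and $(S_{M_t}-S_{M_t-j})_{j \geq 0}$, by $2\delta\sigma^2/\varepsilon^2$, which can be made arbitrarily small by choosing $\delta$ small. Sending first $t \to \infty$ and then $\delta \to 0$ closes the argument.

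The only nontrivial step is the maximal estimate in the second summand: this is the heart of the so-called Anscombe condition, asserting that partial sums of iid centered square-integrable increments do not fluctuate by more than $\varepsilon\sqrt{t}$ across windows of length $o(t)$. The finite-variance assumption is used essentially here. Once this maximal control is available, the remainder of the proof — Cram\'er--Wold reduction, the deterministic CLT, and Slutsky combination — is routine.
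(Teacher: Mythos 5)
Your proof is correct: the Cram\'er--Wold reduction, the CLT along the deterministic index $M_t=\lfloor\theta t\rfloor$, and the verification of the Anscombe condition via Kolmogorov's maximal inequality on the window $\{k:|k-M_t|\leq\delta t\}$ (which needs no independence between $N_t$ and the increments) together give exactly the claimed random-index CLT. The paper does not prove this theorem itself but cites R\'enyi and the textbook treatment in Gut, and your argument is essentially that standard proof, so there is nothing to object to.
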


\begin{lemma}
	\label{fidi convergence}
	Assume that $w(t, \cdot)$ is quasiconcave for all $t>0$ and that (A1)-(A3) hold. 
	Let $s_1 < t_1 < \hdots < s_k < t_k$. Then
	\begin{equation*}
	\mathbb P_{\alpha, \infty}\circ (X^{n}_{s_1, t_1}, \hdots, X^n_{s_k, t_k})^{-1} \Rightarrow \bigotimes_{i=1}^k \mathcal N\big(0, (t_i - s_i) \Sigma\big)
	\end{equation*}
	as $n\to \infty$, where 
	\begin{equation*}
	\Sigma \coloneqq  \frac{\hat{\mathbb E}_\alpha[\Sigma(\hat d_1, \hat \xi_1)]}{\hat{\mathbb E}_\alpha[\widehat T_1]} \quad \text{with } \Sigma(\hat d_1, \hat \xi_1)  \coloneqq \hat d_1 I_d+ \mathbf E_{\hat \xi_1}\big[ X_{0, \hat a_1} \cdot X_{0, \hat a_1} ^T \big],
	\end{equation*}
	and $(\hat d_1, \hat \xi_1) \sim \operatorname{Exp}(\psi(\alpha)) \otimes \widehat \Xi_{\alpha}$, $\hat a_1$ is the length of the cluster $\hat \xi_1$ and $\widehat T_1 = \hat a_1 + \hat d_1$.
\end{lemma}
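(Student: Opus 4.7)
By Lemma \ref{Mixing property}, the joint distribution of $(X^n_{s_1,t_1},\dots,X^n_{s_k,t_k})$ under $\mathbb P_{\alpha,\infty}$ is asymptotically a product of its marginals, so it suffices to prove the one-dimensional marginal CLT: for each fixed $s<t$,
\[
\mathbb P_{\alpha,\infty}\circ (X^n_{s,t})^{-1} \Rightarrow \mathcal N\bigl(0,(t-s)\Sigma\bigr).
\]
By stationarity of $\widehat\Gamma_{\alpha,\operatorname{st}}$ I may translate and assume $s=0$, $t>0$. Then Theorem \ref{Theorem: Existence infinite volume measure, long version} represents the marginal as $\hat{\mathbb E}_\alpha\big[\mathbf P_{R_0^{nt}\hat\eta_s}\!\circ X_{0,nt}^{-1}\big]$, which is the basis for the renewal decomposition below.

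Let $(\hat T_0,\hat\gamma,(\hat d_j,\hat\xi_j)_{j\geq 1})$ be the variables constructed in Proposition \ref{convergence to stationarity} so that $R_0^\infty\hat\eta_s\overset{d}{=}\hat\gamma+\theta_{-\hat T_0}\hat\eta$, and let $\hat S^s_0:=\hat T_0$, $\hat S^s_j:=\hat T_0+\hat T_1+\dots+\hat T_j$ denote the (delayed) embedded renewal process. For $j\geq 1$ set
\[
Y_j:=X_{\hat S^s_{j-1},\,\hat S^s_j},\qquad N_n:=\max\{j\geq 0:\hat S^s_j\leq nt\}.
\]
The core observation is that, by the product structure of $\mathbf P_\xi$ along non-intersecting clusters together with independent Brownian increments on the dormant gaps, the joint law of $(\hat d_j,\hat\xi_j,Y_j)_{j\geq 1}$ under the combined sampling ``draw $\hat\eta_s$, then draw the path'' is a sequence of i.i.d.\ triples, and $Y_j$ conditional on $(\hat d_j,\hat\xi_j)$ is the sum of an independent Brownian increment of variance $\hat d_j I_d$ and an increment $X_{0,\hat a_j}$ under $\mathbf P_{\hat\xi_j}$. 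The symmetry $w(t,-x)=w(t,x)$ in (A1) forces $\mathbf P_\xi$ to be invariant under path-negation, so $\mathbb E[Y_j]=0$; and
\[
\operatorname{Cov}(Y_j)=\hat{\mathbb E}_\alpha\bigl[\hat d_1 I_d+\mathbf E_{\hat\xi_1}[X_{0,\hat a_1}X_{0,\hat a_1}^T]\bigr]=\hat{\mathbb E}_\alpha[\Sigma(\hat d_1,\hat\xi_1)].
\]
Finiteness of this covariance follows from Corollary \ref{Covariance estimate}, which gives $\mathbf E_{\hat\xi_1}\!\big[|X_{0,\hat a_1}|^2\bigr]\leq d\,\hat a_1$, combined with $\hat{\mathbb E}_\alpha[\hat T_1]<\infty$, a consequence of condition \eqref{goodness} (Theorem \ref{Theorem: Equvialent conditions for goodness in the setting of path measures}).

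With the i.i.d.\ decomposition in place, I would write
\[
X_{0,nt}=X_{0,\hat S^s_0}+\sum_{j=1}^{N_n}Y_j+\bigl(X_{0,nt}-X_{0,\hat S^s_{N_n}}\bigr),
\]
and note that by the elementary renewal theorem (Theorem \ref{Elementary Renewal Theorem}) $N_n/n\to t/\hat{\mathbb E}_\alpha[\hat T_1]$ in probability. The Anscombe--Rényi theorem (Theorem \ref{Reyni-Theorem}) then yields
\[
\frac{1}{\sqrt n}\sum_{j=1}^{N_n}Y_j\;\Rightarrow\;\mathcal N\!\Bigl(0,\,\tfrac{t}{\hat{\mathbb E}_\alpha[\hat T_1]}\hat{\mathbb E}_\alpha[\Sigma(\hat d_1,\hat\xi_1)]\Bigr)=\mathcal N(0,t\Sigma),
\]
which is exactly what is needed. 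The boundary pieces $X_{0,\hat S^s_0}$ and $X_{0,nt}-X_{0,\hat S^s_{N_n}}$ have to be shown negligible after division by $\sqrt n$: for the initial piece this is immediate since $\hat T_0$ is a fixed random variable; for the terminal piece, the forward recurrence time $nt-\hat S^s_{N_n}$ has tight distribution (renewal theorem) and Corollary \ref{Covariance estimate} bounds the conditional second moment of the corresponding increment linearly in that length, so a Chebyshev argument finishes this off.

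The main obstacle I anticipate is the rigorous justification that the triples $(\hat d_j,\hat\xi_j,Y_j)$ are genuinely i.i.d.\ under the double randomization (point process then path), since $Y_j$ is defined by first sampling the point process and only then the increment. The cleanest route is to verify directly from Equation \eqref{Mixing of restriction of path measures} and the product-structure identity
$F\bigl(\sum_k\xi_k\bigr)=\prod_k F(\xi_k)$
for non-intersecting clusters $\xi_k$, which implies that, conditionally on $\hat\eta_s$, the path increments over the disjoint renewal blocks are independent with laws depending only on the block's own $(\hat d_j,\hat\xi_j)$; the unconditional i.i.d.\ property then follows from the i.i.d.\ property of $(\hat d_j,\hat\xi_j)_{j\geq 1}$ in the alternating construction.
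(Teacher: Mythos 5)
Your proposal is correct and follows essentially the same route as the paper's proof: reduction to the one-dimensional marginal via Lemma \ref{Mixing property} and stationarity, decomposition of $X_{0,nt}$ along the renewal blocks of the stationary tilted process into i.i.d.\ centered block increments (dormant Brownian part plus cluster part, second moments controlled by Corollary \ref{Covariance estimate} and $\hat{\mathbb E}_\alpha[\widehat T_1]<\infty$), and then the elementary renewal theorem combined with the Anscombe--R\'enyi theorem, with the two boundary blocks shown negligible after division by $\sqrt{n}$. The only differences are cosmetic: the paper conditions on the delay $\hat T_0$, sums up to the first renewal after $n-\hat T_0$ and removes the overshoot via a total-variation/Slutsky argument, while you stop at the last renewal before $nt$ and use a conditional Chebyshev bound (note that $nt-\hat S^s_{N_n}$ is the age rather than the forward recurrence time, though tightness holds either way), and you make explicit the centering of the block increments via the symmetry in (A1), which the paper uses implicitly.
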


\begin{proof}
	By Lemma \ref{Mixing property}, it is sufficient to show that for all $a<b$
	\begin{equation*}
	\mathbb P_{\alpha, \infty} \circ (X^n_{a, b})^{-1} \Rightarrow \mathcal N\big(0, (b - a)\Sigma\big)
	\end{equation*}
	as $n\to\infty$.  By stationarity, we may assume w.l.o.g. that $a=0$. In order to reduce notation we will additionally assume that $b=1$.\\
	Let
	$\big( (\hat T_0, \hat \gamma, \hat Y^0), (\hat d_1, \hat \xi_1, \hat Y^1), (\hat d_2, \hat \xi_2, \hat Y^2), \hdots \big)$ be a sequence of independent $(0, \infty) \times \mathbf N_f(\triangle) \times C(\R, \R^d)$ valued random variables such that
	\begin{itemize}
		\item  For all $k\geq 1$ the random variable $(\hat d_k, \hat \xi_k)$ is $\operatorname{Exp}(\psi(\alpha)) \otimes  \widehat \Xi_\alpha$ distributed and $\hat Y^k$ has conditionally on $(\hat d_k, \hat \xi_k)$ distribution $\mathbf P_{\theta_{-\hat d_k} \hat \xi_k}$
		\item $(\hat T_0, \hat \gamma)$ is as in Proposition \ref{convergence to stationarity} such that $R_0^\infty \hat \eta_s \overset{d}{=} \hat \gamma + \theta_{-\hat T_0} \hat \eta$ (where $\hat \eta$ denotes as usual the process obtained by alternating dormant periods $\hat d_k$ and active clusters $\hat \xi_k$ (starting dormant) and $\hat \eta_s$ denotes a stationary version of $\hat \eta$)
		\item $\hat Y^0$ has conditionally on $(\hat T_0, \hat \gamma)$ distribution $\mathbf P_{\hat \gamma}$.
	\end{itemize}
	In particular
	\begin{equation*}
	\hat {\mathbb E}_\alpha\Big[\hat Y_{0, \hat T_1}^1 \cdot (\hat Y_{0, \hat T_1}^1)^T\Big] = \hat{\mathbb E}_\alpha[\Sigma(\hat d_1, \hat \xi_1)]
	\end{equation*}
	where $\hat T_k$ denotes for $k\in \N$ as usual the sum of the $k$-th active and dormant period. Notice that the existence and finiteness of the second moments follows from 
	\begin{equation*}
	\hat{\mathbb E}_\alpha\Big[\big|\hat Y^1_{0, \hat T_1}\big|^2\Big] \leq d \cdot \hat{\mathbb E}_\alpha[\hat T_1] < \infty
	\end{equation*}
	(remember Corollary \ref{Covariance estimate}). Let $(\hat S_n)_{n\geq 0} = (0, \hat T_1, \hat T_1 + \hat T_2, \hdots)$ denote the embedded renewal process of $\hat \eta$.
	For $t\geq 0$ let
	\begin{equation*}
	\hat X_t \coloneqq
	\sum_{k=1}^{n} \hat Y^{k}_{0, \hat T_{k}} + \hat Y^{n+1}_{0, t-\hat S_{n}} \quad \text{ where $n\in \N_0$ is such that } t\in [\hat S_{n}, \hat S_{n+1}).
	\end{equation*}
	For $t \geq 0$ let $\hat A_t$ denote the forward recurrence time of the embedded renewal process of $\hat \eta$.
	By conditioning with respect to $(\hat T_0 , \hat \gamma, \hat Y^0)$ we get with $V_n \coloneqq n -\hat T_0$ for $f:\R^d \to \R$ continuous and bounded
	\begin{align}
	\label{equations weak convegrence of distribution of increment}
	&\mathbb E_{\alpha, \infty}\big(f(X^n_1)\big) 
	=\hat{\mathbb E}_\alpha\Big[ \mathbf E_{R_0^{n} \hat \eta_s}\big[f(X^n_1) \big]\Big] \nonumber \\
	&= \int_{ \hat \Omega} \hat{\mathbb P}_\alpha(\mathrm d \omega_1) \int_{ \hat \Omega} \hat{\mathbb P}_\alpha(\mathrm d \omega_2) \,
	f\bigg(
	\tfrac{1}{\sqrt{n}} \hat Y^0_{0, \hat T_0(\omega_1) \wedge n}(\omega_1) \nonumber \\
	&\quad  +  \1_{ \{\hat T_0(\omega_1) <n\} } \bigg[ \tfrac{1}{\sqrt{n}} \sum_{k=1}^{\hat \tau_{V_n(\omega_1)}(\omega_2) } \hat Y^k_{0, \hat T_k(\omega_2)}(\omega_2) -  \tfrac{1}{\sqrt{n}}\hat X_{V_n(\omega_1), V_n(\omega_1) + \hat A_{V_n(\omega_1)}(\omega_2)}(\omega_2) \bigg]  \bigg).
	\end{align}	
	Now, fix $\omega_1 \in \hat \Omega$ and set $r \coloneqq \hat T_0(\omega_1)$. We have by Theorem \ref{Elementary Renewal Theorem}
	\begin{equation*}
	\frac{\hat \tau_{n -r}}{n} \to \frac{1}{\hat{ \mathbb E}[\widehat T_1]}
	\end{equation*}
	almost surely as $n\to \infty$. By Theorem \ref{Reyni-Theorem}
	\begin{equation*}
	\frac{1}{\sqrt{n}} \sum_{k=1}^{\hat \tau_{n -r}} \hat Y_{0, \hat T_k}^k \overset{d}{\rightarrow}\mathcal N(0, \Sigma)
	\end{equation*}
	as $n\to \infty$. For $\varepsilon, \delta>0$ let $N \in \N$ be such that for all $n\geq N$
	\begin{equation*}
		\|\hat{\mathbb P}_{\alpha}\circ (\hat X_{n-r, n-r + \hat A_{n-r}})^{-1} - \hat{\mathbb P}_{\alpha}\circ (\hat Y^0_{0, \hat T_0})^{-1} \| < \delta 
	\end{equation*}
	and $\hat{\mathbb P}_\alpha(|\hat Y^0_{0, \hat T_0}| > \sqrt{n}\varepsilon) < \delta$.
	Then, for all $n\geq N$
	\begin{equation*}
		\hat{\mathbb P}_\alpha\Big(\tfrac{1}{\sqrt{n}} \big| \hat X_{n- r, n-r + \hat A_{n - r}} \big| > \varepsilon \Big) < 2 \delta.
	\end{equation*}	
	Hence, by Slutsky's theorem, the inner integral in Equation \eqref{equations weak convegrence of distribution of increment} converges for all $\omega_1 \in \hat \Omega$ to $\int_{\R^d} \mathcal N(0, \Sigma)(\mathrm dx) f(x)$. Thus, by dominated convergence,
	\begin{equation*}
	\lim_{n\to \infty} \mathbb E_{\alpha, \infty}\big(f(X^n_1)\big)  = \int_{\R^d} \mathcal N(0, \Sigma)(\mathrm dx) f(x). \qedhere
	\end{equation*}
\end{proof}

\begin{proof}[Proof of Theorem \ref{Theorem: Functional central limit theorem, long version}]
	We briefly give the argument why we could exclude the case $t_i= s_{i+1}$ for some $1\leq i \leq k-1$ in Lemma \ref{fidi convergence}. Let $(n_k)_k$ be a strictly increasing sequence of natural numbers. By tightness, there exists a subsequence $(n_{k_j})_j$ and a measure $\mathcal P$ on $C(\R, \R^d)$ such that
	\begin{equation*}
	\mathbb P_{\alpha, \infty} \circ (X^{n_{k_j}})^{-1} \Rightarrow \mathcal P
	\end{equation*}
	as $j\to \infty$. By the continuous mapping theorem and Lemma \ref{fidi convergence}, for any $s_1 < t_1 < \hdots < s_k < t_k$
	\begin{equation*}
	\mathcal P \circ (X_{s_1, t_1}, \hdots X_{s_k, t_k})^{-1} = \tilde{\mathcal P} \circ (X_{s_1, t_1}, \hdots X_{s_k, t_k})^{-1}
	\end{equation*}
	where $\tilde{\mathcal P}$ denotes the distribution of $\sqrt{\Sigma} X$ under $\mathcal W$. By approximation, we obtain for all $s_1 < t_1 \leq \hdots \leq s_k < t_k$
	\begin{equation*}
	\mathcal P \circ (X_{s_1, t_1}, \hdots X_{s_k, t_k})^{-1} = \tilde{\mathcal P} \circ (X_{s_1, t_1}, \hdots X_{s_k, t_k})^{-1}
	\end{equation*}
	i.e. $\mathcal P = \tilde{\mathcal P}$. Hence, each subsequence of $(\mathbb P_{\alpha, \infty} \circ (X^{n})^{-1})_n$ has a subsequence that converges weakly to $\tilde{\mathcal P}$. This implies that $(\mathbb P_{\alpha, \infty} \circ (X^{n})^{-1})_n$ converges weakly to $\tilde{\mathcal P}$. It is left to show $\Sigma \leq I_d$. By the Gaussian correlation inequality in the form of Proposition \ref{Gaussian correlation inequality}, we have for all $x\in \R^d$
	\begin{equation*}
	\mathbf E_{\hat \xi_1}\big[\langle x,  X_{0, \hat a_1}  \rangle^2\big] \leq  \mathbb E_\mathcal W[ \langle x,  X_{0, \hat a_1}  \rangle^2] = \hat a_1 |x|^2
	\end{equation*}
	and hence
	\begin{align*}
	\big \langle x, \hat{\mathbb E}_\alpha[ \Sigma(\hat d_1, \hat \xi_1)] x \big \rangle 
	&= \hat{\mathbb E}_{\alpha}[\hat d_1] |x|^2  + \hat{\mathbb E}_\alpha \bigg[ \sum_{i, j=1}^d x_ix_j \mathbf E_{\hat \xi_1}\Big[X_{0, \hat a_1}^i X_{0, \hat a_1}^j\Big]  \bigg] \\
	&=  \hat{\mathbb E}_{\alpha}[\hat d_1]  |x|^2 + \hat{\mathbb E}_{\alpha} \Big[ \mathbf E_{\hat \xi_1}\big[ \langle x,  X_{0, \hat a_1}  \rangle^2 \big]\Big]\\
	&\leq \mathbb E[\hat T_1]|x|^2. \qedhere
	\end{align*}
\end{proof}

\begin{remark}
	If $w(t, \cdot)$ is additionally rotationally symmetric for all $t>0$ then $\Sigma$ is a multiple of the unit matrix, i.e. the limiting distribution is a rescaled Wiener measure.
\end{remark}

\section{Proof of Proposition  \ref{prop:sufficientConditionsForGC} c)}
\label{Section: Growth condition for Polaron-type models}

In this section we assume that $w(t,\cdot)$ is rotationally symmetric and positive (allowing the value $+\infty$) for all $t\geq0$, and that the function $\tilde w$ defined by $w(\cdot,\cdot) = \tilde w (\cdot, |\cdot|)$ is decreasing in the second variable. Remember that we defined
$\tilde w_\beta = \sup_{t \geq 0} \e{\beta t} \tilde w(t,\cdot)$ for $\beta>0$. Let $\beta>0$ and $p>1$ be such that the integrability condition in Proposition \ref{prop:sufficientConditionsForGC} c) holds. We choose 
$g(t)= \beta \e{-\beta t}$ in the decomposition
\eqref{eq: w decomposition} and obtain 
\[
v(t,x) = \frac{1}{\beta} \e{\beta t} w(t,x) \leq \frac{1}{\beta} \tilde w_\beta(|x|)
\]
for all $t\geq 0$ and $x\in \R^d$.
Notice that $\tilde w_\beta$ is decreasing and (as $\tilde w_\beta$ additionally satisfies the integrability condition) that $\tilde w_\beta(r)<\infty$ for all $r\in (0, \infty)$. Without loss of generality we can assume that $v(t,x) > 1$ for all $t\geq 0$ and $x\in \R^d$. The reason is that the function $\bar w$ defined by 
$\bar w(t,x) = w(t,x) + \beta \e{-\beta t}$ satisfies the assumptions of Proposition 
\ref{prop:sufficientConditionsForGC} c) if $w$ does, and leads to $v > 1$. As mentioned at the end of Section \ref{Section: Outline}, \eqref{Our condition} holds for $\bar w$ if and only if it holds for $w$.

Let $(\sigma_n)_{n\geq 0}$ be the iid sequence of $\operatorname{Exp}(\alpha)$ distributed interarrival times and let $(\tau_n)_{n \geq 1}$ be the iid sequence of $\operatorname{Exp}(\beta)$ distributed service times (which is independent of $(\sigma_n)_n$) of our queue. For $n\in \N$ let
\begin{equation*}
	s_n \coloneqq \sum_{k=0}^{n-1} \sigma_k, \quad t_n \coloneqq s_n+ \tau_n.
\end{equation*}
That is, $s_n$ is the arrival and $t_n$ the departure of the $n$-th customer and $\sigma_n$ is the time that passes between the arrival of the $n$-th and the $n+1$-th customer. Then the first cluster $\xi_1$ is given by
\begin{equation*}
	\xi_1 = \sum_{i=1}^N \delta_{(s_i-s_1, t_i-s_1)}
\end{equation*}
where
\begin{equation*}
N = \inf \big\{n\in \N: \tau_k < s_{n+1} - s_k \, \text{ for all } 1 \leq k \leq n\big\}
\end{equation*}
is the number of customers in the first cluster. Notice that $N$ is a stopping time with respect to the filtration generated by $((\sigma_n, \tau_n))_{n \geq 1}$. 
We define the function 
\[
h:(0, \infty) \times (0, \infty) \to (0, \infty], \quad h(t_1, t_2) \coloneqq \mathbb E_\mathcal W[v(t_1, X_{t_2})].
\]
\begin{lemma}
\label{Estimate on F}
We have
	\begin{equation*}
	\prod_{i=1}^{N} h(\tau_i, \tau_i) \leq F(\xi_1) \leq \prod_{i=1}^{N} h(\tau_i, \sigma_i \wedge \tau_i).
	\end{equation*}

\end{lemma}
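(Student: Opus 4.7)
My plan is to prove the two bounds separately, using the Gaussian correlation inequality (Proposition \ref{Gaussian correlation inequality}) for the lower bound and an iterated application of Anderson's inequality for the upper bound. Throughout I set $u_i := s_i - s_1$ so that $\xi_1 = \sum_{i=1}^N \delta_{(u_i, u_i + \tau_i)}$, and introduce the ``capped departure times'' $w_i := u_i + (\sigma_i \wedge \tau_i)$, which satisfy $w_i \leq u_{i+1}$.

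For the lower bound, I would view $(X_{u_i, u_i + \tau_i})_{i=1}^N$ as a centered Gaussian vector in $(\R^d)^N$ under $\mathcal W$, and observe that each coordinate function $y \mapsto v(\tau_i, y_i)$ is symmetric about the origin and quasiconcave: indeed $v(\tau_i, \cdot) = \tfrac{1}{\beta}\e{\beta \tau_i}\tilde w(\tau_i, |\cdot|)$ has origin-centered balls as superlevel sets since $\tilde w(\tau_i, \cdot)$ is decreasing. Applying Proposition \ref{Gaussian correlation inequality}(2) to the singleton partition then gives $F(\xi_1) \geq \prod_{i=1}^N \mathbb E_{\mathcal W}[v(\tau_i, X_{u_i, u_i + \tau_i})] = \prod_{i=1}^N h(\tau_i, \tau_i)$ at once.

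For the upper bound, I would decompose $X_{u_i, u_i + \tau_i} = V_i + M_i$ with $V_i := X_{u_i, w_i}$ and $M_i := X_{w_i, u_i + \tau_i}$; then $V_i \sim \mathcal N(0, (\sigma_i \wedge \tau_i) I_d)$, and the cluster-termination condition $\tau_N < s_{N+1} - s_N = \sigma_N$ forces $w_N = u_N + \tau_N$, so that $M_N = 0$. The key disjointness fact is that for every $i$, the interval $[u_i, w_i]$ shares only its right endpoint with $[w_i, u_i + \tau_i]$ and is strictly to the left of every $[u_j, w_j]$ and $[w_j, u_j + \tau_j]$ with $j > i$, all of which lie in $[u_{i+1}, \infty) \subseteq [w_i, \infty)$. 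Consequently $V_i$ is independent of $\mathcal G_i := \sigma(M_i, V_{i+1}, M_{i+1}, \ldots, V_N, M_N)$. Conditioning on $\mathcal G_1$ and using Anderson's inequality — which yields $\mathbb E[v(\tau_1, V_1 + m)] \leq \mathbb E[v(\tau_1, V_1)] = h(\tau_1, \sigma_1 \wedge \tau_1)$ for every deterministic $m \in \R^d$, by writing the rotationally symmetric decreasing $v(\tau_1, \cdot)$ as a positive superposition of indicators of origin-centered balls — produces
\[
\mathbb E\Big[\prod_{i=1}^N v(\tau_i, V_i + M_i)\Big] \leq h(\tau_1, \sigma_1 \wedge \tau_1) \cdot \mathbb E\Big[\prod_{i=2}^N v(\tau_i, V_i + M_i)\Big],
\]
and iterating this step for $i = 2, \ldots, N-1$, while using $M_N = 0$ in the final factor, yields $F(\xi_1) \leq \prod_{i=1}^N h(\tau_i, \sigma_i \wedge \tau_i)$.

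The main conceptual step will be identifying the decomposition $X_{u_i, u_i + \tau_i} = V_i + M_i$ together with the disjointness property that makes $V_i$ independent of both its own residual $M_i$ and of all later pieces; once this is in place, the iterated application of Anderson's inequality is mechanical, and the lower bound is an immediate single-line consequence of the Gaussian correlation inequality already established in the paper.
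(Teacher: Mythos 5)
Your proposal is correct and takes essentially the same route as the paper: the lower bound is exactly the application of Proposition \ref{Gaussian correlation inequality} to the singleton partition, and the upper bound is the same peeling argument, splitting each increment $X_{s_i,t_i}$ at time $s_i+(\sigma_i\wedge\tau_i)$, using independence of Brownian increments over disjoint intervals and the Anderson-type shift inequality $\mathbb E_{\mathcal W}[v(\tau_i, V_i+m)]\leq \mathbb E_{\mathcal W}[v(\tau_i,V_i)]$ for the rotationally symmetric, radially decreasing $v(\tau_i,\cdot)$. Your unified notation $w_i = u_i+(\sigma_i\wedge\tau_i)$ simply merges the paper's two cases ($t_1\leq s_2$ versus $t_1>s_2$) into a single step.
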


\begin{proof}
	The lower estimate follows immediately from the Gaussian correlation inequality in the form of Proposition \ref{Gaussian correlation inequality}.
	For the upper bound we distinguish between two cases: If $t_1 \leq s_2$ we immediately get by independence of increments
	\begin{equation*}
	F(\xi_1) = \mathbb E_{\mathcal W}\Big[\prod_{i=1}^{N}v(\tau_i,  X_{s_i, t_i})\Big] = h(\tau_1, \tau_1) \mathbb E_{\mathcal W}\Big[\prod_{i=2}^{N}v(\tau_i, X_{s_i, t_i})\Big].
	\end{equation*}
	In case that $t_1>s_2$ we have by independence of $X_{s_1, s_2}$ and $(X_{s, t})_{s, t\geq s_2}$ 
	\begin{equation*}
	F(\xi_1) =  \int_{C(\R, \R^d)} \mathcal W(\mathrm d\mathbf x) \, \mathbb E_{\mathcal W}\big[v\big(\tau_1 , X_{s_1, s_2} + \mathbf x_{s_2, t_1}\big) \big] \prod_{i=2}^{N}v\big(\tau_i, \mathbf x_{s_i, t_i}\big).
	\end{equation*}
	The assumptions on $w$ imply $\mathbb E_{\mathcal W}\big[v\big(\tau_1, X_{s_1, s_2} + x \big) \big] \leq \mathbb E_{\mathcal W}\big[v\big(\tau_1, X_{s_1, s_2}\big) \big]$ for all $x\in \R^d$ and thus
	\begin{equation*}
	F(\xi_1) \leq  h(\tau_1, \sigma_1)  \mathbb E_{\mathcal W}\Big[\prod_{i=2}^{N}v(\tau_i, X_{s_i, t_i}) \Big].
	\end{equation*}
	Iterating this procedure completes the proof.
\end{proof}

For the Fröhlich polaron path measure we choose $\beta=1$ and obtain by an integration in spherical coordinates
\begin{equation*}
	h(t_1, t_2) = \sqrt{\frac{2}{\pi}} \frac{1}{\sqrt{t_2}}
\end{equation*}
for all $t_1, t_2>0$. Hence, our lower estimate for $F$ becomes the lower estimate given in \cite{MV19}. If one sorts the customers in the first cluster by the time of their departure, starting with the customer that departs last, and modifies our proof accordingly, one obtains an upper estimate for $F$ that is sharper then the estimate given in \cite{MV19}.

\begin{lemma} 
\label{one interval estimate} 
Assume that $w$, $\beta$ and $p$ satisfy the assumptions of Proposition \ref{prop:sufficientConditionsForGC} c). 
Then for all $\alpha > 0$, 
\[
\bbE_\alpha\big[h(\tau_1,\sigma_1 \wedge \tau_1)^p \big] \leq (\alpha+\beta)^{(d+2)/4} c(w,\beta,p),
\]
where the constant $c(w,\beta,p)$ is independent of $\alpha$.
\end{lemma}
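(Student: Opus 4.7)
The plan is to reduce the expectation to an integral of $\tilde w_\beta^p$ against the resolvent kernel of $\tfrac12\Delta$ on $\R^d$, from which the factor $(\alpha+\beta)^{(d+2)/4}$ falls out of a scaling identity for the kernel.

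First, the uniform bound $v(t,x)\leq \beta^{-1}\tilde w_\beta(|x|)$ combined with Jensen's inequality applied to the convex map $y\mapsto y^p$ inside the Brownian expectation gives
\begin{equation*}
h(\tau_1,\sigma_1\wedge\tau_1)^p \;=\; \mathbb E_\mathcal W\bigl[v(\tau_1, X_{\sigma_1\wedge\tau_1})\bigr]^p \;\leq\; \beta^{-p}\,\mathbb E_\mathcal W\bigl[\tilde w_\beta(|X_{\sigma_1\wedge\tau_1}|)^p\bigr].
\end{equation*}
The right-hand side no longer depends on $\tau_1$, and since $\sigma_1\wedge\tau_1\sim\operatorname{Exp}(\alpha+\beta)$ is independent of $X$, Fubini produces
\begin{equation*}
\mathbb E_\alpha\bigl[h(\tau_1,\sigma_1\wedge\tau_1)^p\bigr]\;\leq\; \beta^{-p}\int_{\R^d}\tilde w_\beta(|y|)^p\,\lambda G_\lambda(y)\,\mathrm d y, \qquad \lambda:=\alpha+\beta,
\end{equation*}
where $G_\lambda(y)=\int_0^\infty e^{-\lambda t}(2\pi t)^{-d/2}e^{-|y|^2/(2t)}\,\mathrm d t$ is the resolvent kernel of $\tfrac12\Delta$.

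The crucial ingredient is the classical Bessel-function representation of $G_\lambda$, which after a short rearrangement takes the form
\begin{equation*}
\lambda G_\lambda(y) \;=\; C_d\,\lambda^{(d+2)/4}\,|y|^{-(d-2)/2}\,K_{d/2-1}\bigl(\sqrt{2\lambda}\,|y|\bigr),
\end{equation*}
with a dimensional constant $C_d$ and $K_\nu$ the modified Bessel function of the second kind. This identity isolates the exponent $(d+2)/4$ immediately, and after spherical coordinates the claim reduces to showing that the residual radial integral $\int_0^\infty r^{d/2}\tilde w_\beta(r)^p K_{d/2-1}(\sqrt{2\lambda}\,r)\,\mathrm d r$ is bounded by a constant depending only on $w,\beta,p$, uniformly in $\lambda\geq\beta$. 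To see this I would split at $r=1/\sqrt{2\lambda}$ and at $r=1$. On $(0,1/\sqrt{2\lambda}]$ the small-argument asymptotics of $K_\nu$ (a power $z^{-(d/2-1)}$ for $d\geq 3$, a logarithm for $d=2$, and the explicit $\sqrt{\pi/(2z)}e^{-z}$ for $d=1$) together with the elementary bound $|\log(\sqrt{2\lambda}\,r)|\leq|\log r|$, valid whenever $r<1/\sqrt{2\lambda}\leq 1$, produce an integrand that is controlled by precisely the function appearing in the dimension-specific integrability hypothesis on $\tilde w_\beta$. On $[1/\sqrt{2\lambda},\infty)$ the large-argument asymptotic $K_\nu(z)\sim\sqrt{\pi/(2z)}e^{-z}$ supplies exponential decay; combined with $\tilde w_\beta(r)\leq \tilde w_\beta(1)$ for $r\geq 1$ and the integrability on $[0,1]$ already used above, this contribution is again uniformly bounded in $\lambda$ (in fact it decays with $\lambda$).

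The main obstacle is the dimension-dependent bookkeeping of the Bessel singularity at the origin, where the three integrability assumptions on $\tilde w_\beta$ in Proposition \ref{prop:sufficientConditionsForGC}(c) are matched exactly to the three qualitative regimes of $K_{d/2-1}$ near $z=0$. The subtlest case is $d=2$: a crude bound on $K_0(\sqrt{2\lambda}\,r)$ would introduce a spurious $\log\lambda$ and push the exponent beyond $(d+2)/4=1$, and it is precisely the sharp inequality $|\log(\sqrt{2\lambda}\,r)|\leq|\log r|$ on $r<1/\sqrt{2\lambda}$ that is needed to prevent this. The lower bound $\lambda\geq\beta$ is used only at the very end to absorb a finite, $\beta$-dependent constant.
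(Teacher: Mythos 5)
Your proposal is correct and follows essentially the same route as the paper: Jensen's inequality together with $v\leq\beta^{-1}\tilde w_\beta$, the explicit evaluation of the time integral as a modified Bessel function (your resolvent kernel $\lambda G_\lambda$), which isolates the factor $(\alpha+\beta)^{(d+2)/4}$, and the small/large-argument asymptotics of $K_{(d-2)/2}$ matched against the dimension-specific integrability hypotheses on $\tilde w_\beta$. The only (minor) difference is how $\alpha$-uniformity of the residual radial integral is obtained: the paper uses that $K_\nu$ is decreasing in its argument to replace $\sqrt{2(\alpha+\beta)}\,r$ by $\sqrt{2\beta}\,r$ inside the Bessel function, reducing everything to one fixed $\beta$-dependent integral, whereas you keep $\lambda=\alpha+\beta$ and prove uniform boundedness in $\lambda\geq\beta$ by splitting the integral, with the bound $|\log(\sqrt{2\lambda}\,r)|\leq|\log r|$ handling $d=2$ --- both arguments are sound.
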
 

\begin{proof} 
By Jensens inequality,
\[ 
	\mathbb E_\alpha[h(\tau_1, \sigma_1 \wedge \tau_1)^p] \leq \mathbb E_\alpha \Big[\mathbb E_\mathcal W\big[ v(\tau_1, X_{\sigma_1 \wedge \tau_1})^p \big] \Big] 
	\leq \beta^{-p} \cdot\mathbb E_\alpha \Big[\mathbb E_\mathcal W\big[\tilde w_\beta(|X_{\sigma_1 \wedge \tau_1}|)^p \big] \Big].
\]
	For $a, b>0$ we have
	\begin{align*}
	\int_0^\infty \mathrm dt\, t^{-d/2}\exp\big(-a/t - b \cdot t \big) &= \int_0^\infty \mathrm dt\, t^{d-3}\exp\big(-at^2 - b/t^2 \big) \\
	&= \left(b/a\right)^{(d-2)/4} K_{(d-2)/2}(2 \sqrt{ab})
	\end{align*}
	(the second equality can be obtained by substituting $t= (b/a)^{1/4} e^{u}$) where, for $\gamma\in \R$ and $x>0$
	\begin{equation*}
		K_\gamma(x) = \int_0^\infty e^{-x \cosh(u)} \cosh(\gamma u) \, \mathrm du
	\end{equation*}
	denotes the modified Bessel function of second kind. With appropriately chosen constants, $c_d, \tilde c_d>0$ we obtain
	\begin{align}
	&\mathbb E_\alpha \Big[\mathbb E_\mathcal W\Big[ \tilde w_\beta( |X_{\sigma_1 \wedge \tau_1}|)^p \Big] \Big]
	\nonumber  \\
	&= c_d(\alpha + \beta)\int_0^\infty \mathrm dt \, e^{-(\alpha + \beta)t} \int_0^\infty \mathrm dr\,  r^{d-1}  \tilde w_\beta(r)^p e^{-r^2/2t} t^{-d/2} \nonumber \\
	&= \tilde c_d(\alpha + \beta)^{(d+2)/4} \int_0^\infty \mathrm dr \, r^{d-1}  \tilde w_\beta(r)^p  r^{-(d-2)/2} K_{(d-2)/2}(\sqrt{2(\alpha + \beta)} \cdot r)  \nonumber    \\
	&\leq \tilde c_d(\alpha + \beta)^{(d+2)/4} \int_0^\infty \mathrm dr \,  \tilde w_\beta(r)^p \cdot r^{d/2} K_{(d-2)/2}(\sqrt{2 \beta } \cdot r).
	\label{eq: Bessel}
	\end{align}
	Now, as $r \to \infty$, we have \cite[p. 378]{AbraSteg72}
	\begin{align*}
		K_{(d-2)/2}(\sqrt{2 \beta } \cdot r) \in
		\mathcal O\big( r^{-1/2} e^{-\sqrt{2 \beta} \cdot r} \big),
	\end{align*}
	and since $\tilde w_\beta$ is decreasing and thus bounded on $[1, \infty)$, the integral \eqref{eq: Bessel} is finite on $[1,\infty)$.
	For $r\to 0$ we have \cite[p. 375]{AbraSteg72}
	\begin{align*}
		K_{(d-2)/2}(\sqrt{2 \beta } \cdot r) \in 
		\begin{cases}
			\mathcal O\big(-\ln(r)\big) \quad &\text{ for }d=2 \\
			\\ \mathcal O\big(r^{-|d-2|/2}\big) \quad &\text{ for }d\neq 2.
		\end{cases}
	\end{align*}
	The integrability assumptions in  \ref{prop:sufficientConditionsForGC} c) then guarantee that the integral  
	\eqref{eq: Bessel} is finite on $[0,1]$, too, proving the claim. 
\end{proof} 

\begin{lemma}
	\label{estimating ev random product}
	Let $(X_n)_n$ be an iid sequence of positive random variables adapted to some filtration $(\mathcal F_n)_n$. Let $X_{n+1}$ be independent of $\mathcal F_n$ for all $n\in \N$. Let $\tau$ be an a.s. finite stopping time with respect to $(\mathcal F_n)_n$. Assume that $p>1$ is such that $\mathbb E[X^p]< \infty$. Then
	\begin{equation*}
	\mathbb E\Big[ \prod_{i=1}^{\tau} X_i \Big] \leq   \mathbb E\bigg[\Big(\mathbb E\big[X_1^p\big]^{q/p}\Big)^{\tau}\bigg]^{1/q}
	\end{equation*}
	where $q$ denotes the conjugated Hölder index of $p$.
\end{lemma}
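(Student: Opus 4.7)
The plan is to combine Hölder's inequality with an optional-stopping argument for a suitable positive martingale. Set $c \coloneqq \mathbb E[X_1^p]^{1/p}$, which is finite by hypothesis, and split the product as $\prod_{i=1}^\tau X_i = \big(\prod_{i=1}^\tau X_i / c^\tau\big) \cdot c^\tau$. Applying Hölder's inequality with conjugate exponents $p$ and $q$ gives
\begin{equation*}
\mathbb E\Big[\prod_{i=1}^\tau X_i\Big] \leq \mathbb E\Big[\prod_{i=1}^\tau \frac{X_i^p}{c^p}\Big]^{1/p}\, \mathbb E\big[c^{q\tau}\big]^{1/q}.
\end{equation*}
The second factor on the right is exactly $\mathbb E\big[(\mathbb E[X_1^p]^{q/p})^\tau\big]^{1/q}$, the right-hand side of the lemma; so it remains to show that the first factor is at most $1$.

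To that end, set $M_0 \coloneqq 1$ and $M_n \coloneqq \prod_{i=1}^n (X_i^p/c^p)$ for $n \geq 1$. Since $X_i$ is $\mathcal F_i$-measurable, $M_n$ is $\mathcal F_n$-measurable; and since $X_{n+1}$ is independent of $\mathcal F_n$ with $\mathbb E[X_{n+1}^p] = c^p$, we have $\mathbb E[M_{n+1} \mid \mathcal F_n] = M_n$. Hence $(M_n)_{n \geq 0}$ is a positive $(\mathcal F_n)$-martingale with $\mathbb E[M_n] = 1$.

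For each $n \in \N$ the stopping time $\tau \wedge n$ is bounded, so the optional stopping theorem yields $\mathbb E[M_{\tau \wedge n}] = 1$. Since $\tau$ is a.s.\ finite, $M_{\tau \wedge n} \to M_\tau$ almost surely as $n \to \infty$, and Fatou's lemma gives $\mathbb E[M_\tau] \leq \liminf_{n \to \infty} \mathbb E[M_{\tau \wedge n}] = 1$. Unwinding the definition of $M_\tau$ shows that the first factor in the Hölder estimate is at most $1$, which completes the proof. The only mild subtlety is the Fatou step, which is needed because $\tau$ is only assumed a.s.\ finite; notably, no integrability assumption on $\tau$ is used, and no property of $X_i$ beyond $\mathbb E[X_1^p] < \infty$ enters.
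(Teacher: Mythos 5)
Your proof is correct and follows essentially the same route as the paper: factor out $\mathbb E[X_1^p]^{\tau/p}$, apply Hölder with exponents $p$ and $q$, and bound the remaining factor by showing $M_n=\prod_{i=1}^n X_i^p/\mathbb E[X_1^p]$ is a positive martingale whose stopped expectation is at most $1$ via optional stopping and Fatou. The only difference is the order of presentation (you apply Hölder before setting up the martingale), which is immaterial.
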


\begin{proof}
	For $n\in \N$ define
	\begin{equation*}
	Y_n \coloneqq \prod_{i=1}^{n} \frac{X_i^p}{\mathbb E[X_1^p]}.
	\end{equation*}
	Then $(Y_n)_n$ is a martingale with respect to $(\mathcal F_n)_n$.
	By the optional stopping theorem, we have $\mathbb E[Y_{\tau \wedge n}] = 1$ for all $n\in \N$. By Fatou's Lemma
	\begin{equation*}
	\mathbb E[Y_{\tau}] \leq \liminf_{n \to \infty} \mathbb E[Y_{\tau \wedge n}] = 1.
	\end{equation*}		
	If we denote by $q$ the conjugated Hölder index to $p$ we get with Hölders inequality
	\begin{align*}
	\mathbb E \Big[ \prod_{i=1}^{\tau} X_i \Big] &= \mathbb E \bigg[\mathbb E[X_1^p]^{\tau/p} \prod_{i=1}^{\tau} \frac{X_i}{\mathbb E[X_1^p]^{1/p}} \bigg] \\
	&\leq \mathbb E \Big[\mathbb E[X_1^p]^{\tau q/p} \Big]^{1/q} \mathbb E [Y_\tau]^{1/p} \\
	&\leq \mathbb E \Big[\mathbb E[X_1^p]^{\tau q/p} \Big]^{1/q}. \qedhere
	\end{align*}
\end{proof}

\begin{proof}[Proof of Proposition \ref{prop:sufficientConditionsForGC} c)]
We first show that $Z_{\alpha, T} < \infty$ for all $\alpha, T > 0$ and that $\psi(\alpha)<\infty$ for all $\alpha >0$. 
Notice that, by the series expansion of the exponential function and by a change of the order of integration (in the same manner as in the beginning of Section \ref{Section: Outline}), one obtains Equation \eqref{Equation: partition function and fat partition function} even without assuming that $\mathbb P_{\alpha, T}$ defines a probability measure (i.e. without assuming that $Z_{\alpha, T} < \infty$).
Since $T_1 \geq \sigma_0 + \hdots + \sigma_{N-1} + \tau_N$, Lemma \ref{Estimate on F} implies  
	\begin{equation}
		\label{Equation: Show partition function is finite}
		\mathbb E_\alpha[ e^{-\mu T_1} F(\xi_1)] \leq \mathbb E_\alpha \Big[\prod_{i=1}^{N} e^{-\mu(\sigma_i \wedge \tau_i)} h(\tau_i, \sigma_i \wedge \tau_i) \Big].
	\end{equation}
	for all $\mu \geq 0$.
By Lemma \ref{one interval estimate}, we have $\mathbb E_\alpha[h(\tau_1, \sigma_1 \wedge \tau_1)] < \infty$. 
Hence, there exists a $\mu \geq 0$ such that $\mathbb E_\alpha[e^{-\mu(\sigma_1 \wedge \tau_1)} h(\tau_1, \sigma_1 \wedge \tau_1)] \leq 1$.

As in the proof of Lemma \ref{estimating ev random product}, an application of the optional stopping theorem to the supermartingale
$\big(\prod_{i=1}^n e^{-\mu(\sigma_i \wedge \tau_i)} h(\tau_i, \sigma_i \wedge \tau_i)\big)_{n \in \bbN}$ gives  
$\mathbb E_\alpha[ e^{-\mu T_1} F(\xi_1)] \leq 1$ for this $\mu$. By
Proposition \ref{Proposition: Partition function locally bounded}, there exist $C, \lambda>0$ such that $\mathbf Z_{\alpha, T} \leq C \e{\lambda T}$ for all $T\geq 0$. Since $e^{c_{\alpha, T}} \sim e^{2\alpha T  - \alpha\mathbb E[\tau_1]}$
this yield the existence of $\widetilde C, \tilde \lambda > 0$ such that
	\begin{equation*}
		Z_{\alpha, T} \leq \widetilde C e^{\tilde \lambda T}
	\end{equation*}
	for all $T\geq 0$. 	
	By superadditivity
	\begin{equation*}
		\psi(\alpha) = \lim_{T \to \infty} \frac{\log(Z_{\alpha, T})}{T} = \sup_{T>0}  \frac{\log(Z_{\alpha, T})}{T} \leq \tilde \lambda. 
	\end{equation*} 	
For showing the validity of \eqref{Our condition}, we show that \eqref{goodness of alpha} holds, and as we already 
know that (A1) and (A2) hold, we may then apply Theorem \ref{Theorem: Equvialent conditions for 
goodness in the setting of path measures}. 
 Since we assumed $v>1$, we have $\mathbb E_\alpha[F(\xi_1)]>1$ for all $\alpha >0$, 
 and by Proposition \ref{Remark: stronger version of goodness} it is thus sufficient to show that 
 $\mathbb E_\alpha[F(\xi_1)]< \infty$ for sufficiently small $\alpha$.
	 By Lemma \ref{Estimate on F} it is sufficient to show that
	 \begin{equation*}
	 	\mathbb E_\alpha \Big[\prod_{i=1}^{N} h(\tau_i, \sigma_i \wedge \tau_i) \Big] < \infty
	 \end{equation*}
	 for sufficiently small $\alpha$.
	 Let $r_{\alpha}$ be the radius of convergence of the probability generating function of $N$. By Lemma \ref{estimating ev random product}, it is sufficient to show that
	 \begin{equation}
	 	\label{Equation: h has to be smaller then power of radius of convergence}
	  	\mathbb E_\alpha\big[h(\tau_1, \sigma_1 \wedge \tau_1)^p\big] < r_{\alpha}^{p/q}
	 \end{equation}
	 for sufficiently small $\alpha$. One can convince oneself (e.g. by looking at the known formula of the probability 
	 generating function of $N$ for this particular choice of $g$, see \cite{10.2307/1428137}) that $\lim_{\alpha \to 0}
	 r_{\alpha} = \infty$. On the other hand, Lemma \ref{one interval estimate} shows that the left hand side of 
	 \eqref{Equation: h has to be smaller then power of radius of convergence} remains bounded as $\alpha \to 0$. 
	 This shows the claim.  
\end{proof}

\begin{figure}
	\centering
	\includegraphics[width=0.7\linewidth]{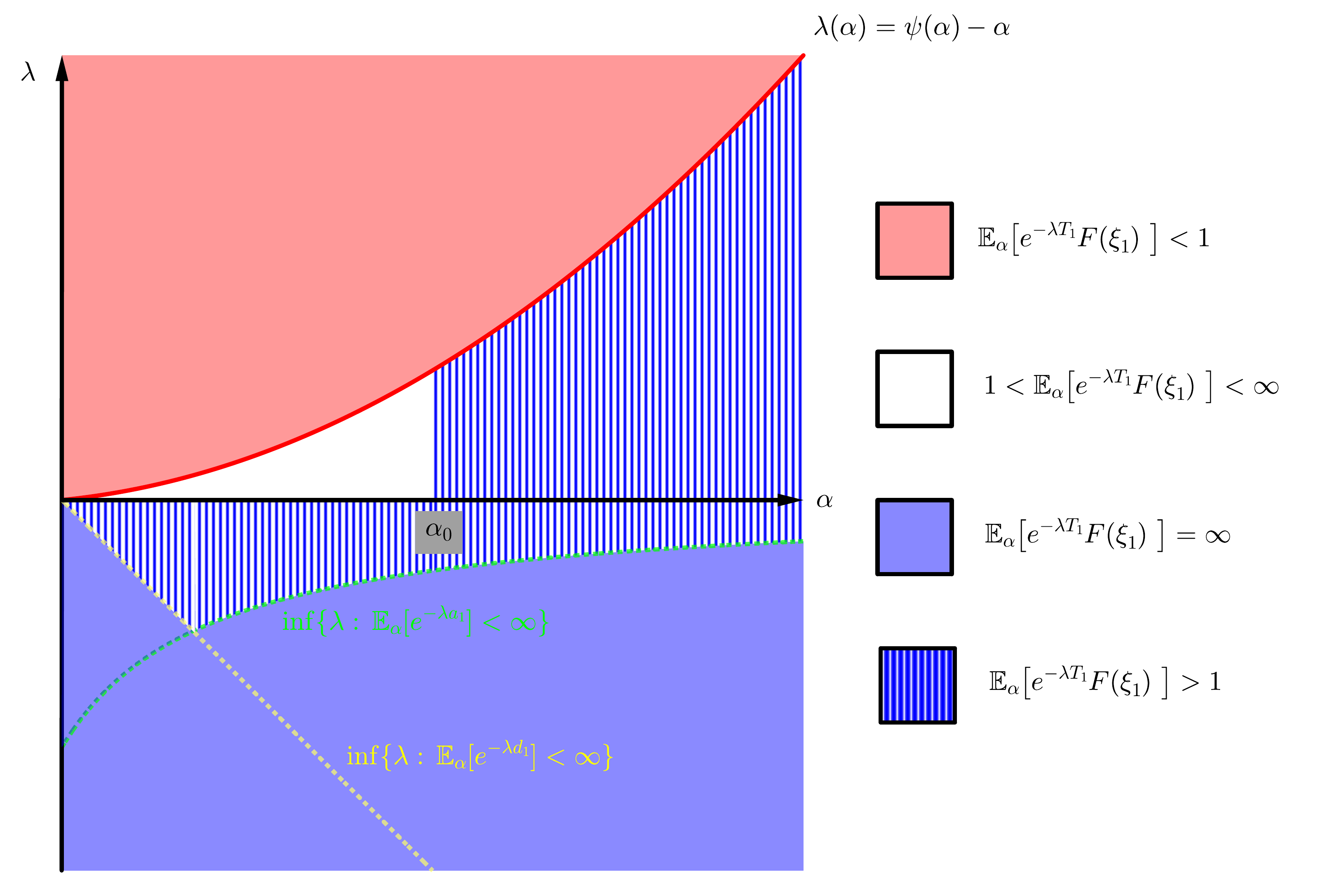}
	\caption{Expected value $\mathbb E_\alpha\big[e^{-\lambda T_1} F(\xi_1)\big]$ in dependency of $\alpha$ and $\lambda$ under the assumptions of Proposition \ref{prop:sufficientConditionsForGC} c) and assuming that $v>1$. In the shaded region we do not know whether the expected value is finite or infinite}
	\label{fig:phases}
\end{figure}

\begin{remark}
Let us consider the situation from Proposition \ref{prop:sufficientConditionsForGC} c). We choose $\beta$ and $g$ as in the proof of Proposition \ref{prop:sufficientConditionsForGC} c) and assume, as in the proof, that $v>1$. By Proposition \ref{at least <= 1} we have for all $\alpha>0$
\begin{equation*}
	\psi(\alpha) - \alpha = \min\big\{\lambda\in \R:\, \mathbb E_\alpha \big[e^{-\lambda T_1}F(\xi_1) \big] \leq 1 \big\}.
\end{equation*}
Using the known formula for the Laplace transform of an active period for this choice of $g$, see \cite{10.2307/1428137}, one obtains that the behavior of $\mathbb E_\alpha \big[e^{-\lambda T_1}F(\xi_1) \big]$ as a function of $\alpha$ and $\lambda$ is as depicted in Figure \ref{fig:phases}.
We call $\alpha$ ``good'' for $w$ if \eqref{Our condition} is satisfied. Notice that $\alpha$ is good for $w$ if and only if $1$ is good for $\alpha w$. 
A small calculation shows (where we denote the dependency on $w$ by another subscript)
\begin{align*}
&\partial_\alpha \widehat{\Gamma}_{1, T, \alpha w}(\text{the system is dormant at $0$}) \leq 0 \\
&\iff \mathbb E_{\widehat{\Gamma}_{1, T, \alpha w}}[N_\triangle]
\geq \mathbb E_{\widehat{\Gamma}_{1, T, \alpha w}}[N_\triangle|\text{the system is dormant at $0$}]
\end{align*}
where $N_\triangle : \mathbf N_f(\triangle) \to \N_0$, $\mu \mapsto \mu(\triangle)$ denotes the number of points in $\triangle$. Hence, it seems plausible that $ \widehat{\Gamma}_{1, T, \alpha w}(\text{the system is dormant at $0$})$ is decreasing in $\alpha$ for all $T>0$, which would imply (by Theorem \ref{Theorem: Equvialent conditions for goodness in the setting of path measures}) that the set of $\alpha$ that are good for $w$ is of the form $(0, b)$ or $(0, b]$ for some $b\in (0, \infty]$.
Then, if $b<\infty$, the function $\alpha \mapsto \lim_{T\to \infty} Z_{\alpha, T} e^{-\psi(\alpha)T}$ would be non-analytic (the limit exists irrespective whether \eqref{Our condition} is satisfied or not by Theorem \ref{Goodness criterions for alternating processes} and Proposition \ref{at least <= 1}).
\end{remark}

\section{Relations between $\psi(\alpha)$ and $\widehat{\Gamma}_{\alpha, \operatorname{st}}$}  
	\label{Fun calculations}
	In this short section we present a few formal calculations that relate the free energy $\psi$ 
	to certain expectations with respect to the tilted stationary measure $\widehat{\Gamma}_{\alpha, \operatorname{st}}$. 
	Although we exchange limits and integrals in an uncontrolled way in several places, we expect the 
	resulting formulae to be correct. They show that there is an intricate relationship between $\psi$ and 
	the expected value of several natural random variables with respect to $\widehat{\Gamma}_{\alpha, \operatorname{st}}$ which seems well 
	worth exploring further in the future. 
	
	We assume  that (A1)--(A3) are satisfied for all $\alpha$ and hence
	\begin{equation*}
	\mathbb E_\alpha\big[e^{-(\psi(\alpha) - \alpha)T_1} F(\xi_1) \big] = 1
	\end{equation*}
	for all $\alpha>0$. Multiplying the potential by $e^{c}$ for some $c\in \R$ yields with the number $N$ of customers in the first cluster
	\begin{equation*}
	\mathbb E_\alpha\big[e^{-(\psi(e^{c}\alpha) - \alpha)T_1} e^{c N} F(\xi_1) \big] = 1.
	\end{equation*}
	Assuming that $\psi$ is differentiable and that we may differentiate under the integral we obtain by differentiation with respect to $c$ 
	\begin{equation*}
	\alpha \psi'(\alpha) \mathbb E_\alpha\big[T_1 e^{-(\psi(\alpha) - \alpha)T_1} F(\xi_1)\big]= \mathbb E_\alpha\big[Ne^{-(\psi(\alpha) - \alpha)T_1} F(\xi_1)]
	\end{equation*}
	or short $\alpha \psi'(\alpha) \hat{\mathbb E}_{\alpha}[\widehat T_1] =    \hat{\mathbb E}_{\alpha}[\widehat N]$. In other words, $\psi(\alpha) = 1/\hat{\mathbb E}_{\alpha}[\hat d_1]$ determines the length of dormant periods in the reweighted process and $\alpha \psi'(\alpha)$ determines the number of points per unit of time in the reweighted process. Notice that 
	\begin{equation}
	\label{Equation: psi' is increase in point density}
	\psi'(\alpha) = \frac{\hat{\mathbb E}_\alpha[\widehat N]/\hat{\mathbb E}_\alpha[\widehat {T}_1]}{\mathbb E_\alpha[N]/\mathbb E_\alpha[T_1]}
	\end{equation}
	as $\alpha = \mathbb E_\alpha[N]/\mathbb E_\alpha[T_1]$ (this can be obtained from the previous considerations by choosing $w(t, x) = g(t)$ for $t\geq 0$ and $x\in \R^d$). 
	For a constant $c\geq 0$ we add $c\cdot g$ to $w$ (i.e. we add $c$ to $v$) and obtain
	\begin{equation*}
	\mathbb E_\alpha\bigg[e^{-(\psi(\alpha) + \alpha c - \alpha)T_1} \mathbb E_\mathcal W \Big[\prod_{i=1}^{N}v(t_i - s_i, X_{s_i, t_i}) + c \Big] \bigg]  = 1.
	\end{equation*}
	If we again assume that we may differentiate under the integral we obtain
	\begin{equation*}
	\alpha \mathbb E_\alpha\big[T_1e^{-(\psi(\alpha) - \alpha)T_1} F(\xi_1) \big] = \mathbb E_\alpha\big[Ne^{-(\psi(\alpha) - \alpha)T_1} \overline{F}(\xi_1)]
	\end{equation*}
	where
	\begin{equation*}
	\overline{F}(\xi_1) = \frac{1}{N}\sum_{j=1}^N \mathbb E_\mathcal W\Big[\prod_{i\neq j}v(t_i - s_i, X_{s_i, t_i}) \Big].
	\end{equation*}
	Combining both equalities yields
	\begin{equation}
	\label{eq: interesting formula}
	\psi'(\alpha) = \frac{ \mathbb E_\alpha\big[Ne^{-(\psi(\alpha) - \alpha)T_1} F(\xi_1)]}{ \mathbb E_\alpha\big[Ne^{-(\psi(\alpha) - \alpha)T_1} \overline{F}(\xi_1)]}.
	\end{equation}
	In other words, $\psi'(\alpha)$ is a measure for how much $F$ changes 
	on average if we randomly delete a point of $\xi_1$. 
	If there exists a measurable function $f:[0, \infty) \to (0, \infty)$ satisfying $\int_0^\infty (1+t)f(t) \, \mathrm dt< \infty$ such that $w(t, x) \leq f(t)$ for all $x\in \R^d$ and $t\geq 0$ (i.e. if the assumptions of Proposition \ref{prop:sufficientConditionsForGC} a) are satisfied) then we may choose $g = f/\|f\|_{L^1}$ and obtain $v \leq C$ with $C \coloneqq \|f\|_{L^1}$.
	This means that 
	$F(\xi_1) \leq C \bar F(\xi_1)$, and then \eqref{eq: interesting formula} implies a linear upper
	bound for the growth of $\psi(\alpha)$ with $\alpha$. It should be noted, however, that this can be obtained by an elementary calculation, using $e^{C \cdot c_{\alpha, T}} \sim e^{2\alpha C T - \alpha C \mathbb E[\tau_1]}$. Similarly, lower linear bounds can be derived. For the Fröhlich polaron, $\psi(\alpha) \sim g_0 \alpha^2$ and thus $\psi'(\alpha) \to \infty$ as $\alpha\to \infty$ (by convexity of $\psi$). Combining this with Equation \eqref{Equation: psi' is increase in point density}, we see that, as a consequence of the singularity of the potential, reweighting the point process leads to a relative increase of the number of customers per unit of time that diverges to $+\infty$ as $\alpha \to \infty$.

	By Theorem \ref{Theorem: Equvialent conditions for goodness in the setting of path measures} we have
	\begin{equation*}
	\hat{\mathbb E}_\alpha[\widehat T_1] = \mathbb E_\alpha\big[T_1e^{-(\psi(\alpha) - \alpha)T_1} F(\xi_1)\big] = \frac{1}{\psi(\alpha) \lim_{T\to \infty} Z_{\alpha, T} e^{-\psi(\alpha)T}}.
	\end{equation*}
	As seen in the proof of Proposition \ref{prop:sufficientConditionsForGC} b) and Corollary \ref{cor:polaron}, for the Fröhlich polaron this yields
	\begin{equation*}
	\hat{\mathbb E}_\alpha[\widehat T_1]= -\frac{1}{E_{\alpha}(0)|\langle \Omega, \Psi_\alpha\rangle|^2}
	\end{equation*}
	where $\Omega$ is the Fock-vacuum and $\Psi_\alpha$ and $E_\alpha(0)$ are the ground state and the ground state energy of the Hamiltonian of the Fröhlich polaron at total momentum zero. Notice that
	\begin{equation*}
		|\langle \Omega, \Psi_\alpha\rangle|^2 = \hat{\mathbb E}_\alpha[\hat d_1]/\hat{\mathbb E}_\alpha[\widehat T_1]
	\end{equation*}
	is the probability that we are dormant at a given point in time under the stationary measure $\widehat \Gamma_{\alpha, \operatorname{st}}$.
	We can also put the coupling parameter into the potential, to obtain 
	\begin{equation*}
	\mathbb E_1\Big[\alpha^Ne^{-(\psi(\alpha) - 1)T_1}F(\xi_1)\Big] = 1.
	\end{equation*}
	This identity might potentially be used in order to show analyticity of $\psi$ using the implicit function theorem.

\section*{Appendix: Polaron models and Polaron path measures} 
	
	Here we give an overview over polaron models and their connection to Polaron path measures. For 
	details and proofs we refer to  \cite{DySp20} and \cite{Mo06}, see also Chapters 5 and 6 of \cite{LHB11}.
		 
	The polaron describes a $d$-dimensional quantum particle coupled to a scalar Bosonic field, 
	e.g.\ the lattice vibrations of a polar crystal. Its Hamiltonian acts in the space 
	$L^2(\bbR^d) \otimes \caF$, where $\caF$ the Hilbert space for the Bose
	field, i.e.\ the symmetric 
	Fock space over $L^2(\bbR^d)$. The Hamiltonian is given by 
	\[
	H = \frac12 p^2 + \int_{\bbR^d} \omega(k) a^\ast(k) a(k) \, \dd k + \sqrt{\alpha} \int_{\bbR^d} 
	\frac{\hat \varrho(k)}{\sqrt{2 \omega(k)}}\big(\e{\ii k \cdot x} a(k) + \e{- \ii k \cdot x} a^\ast(k) \big) \, 
	\dd k.
	\] 
	Here $a^\ast(k)$, $a(k)$ are the creation and annihilation 
	operators of the free Bose field, respectively, satisfying the canonical commutation relations 
	$[a^\ast(k), a(k')] = \delta(k-k')$. The first term represents the momentum operator of the free particle, and 
	the second term is the energy of the free field, which is the  differential second quantization of the operator of 
	multiplication with $\omega$. 
 	The energy-momentum relation $\omega$ of the Bose field is assumed to be nonnegative, 
	strictly positive almost everywhere, continuous, and invariant under rotations. 
	The third term implements the coupling between particle and field, $x$ being the position operator 
	of the particle. 
	The function $\varrho$ is used to 'smear out' the coupling of the particle to the field. $\hat \varrho$ is 
	assumed to be rotation invariant and real-valued. The function 
	\[
	g(k) = \hat \varrho(k) / \sqrt{2 \omega(k)}
	\] 
	is 
	usually called the coupling function, and $\alpha$ is the coupling constant. Important special cases are the
	Fröhlich polaron where 
	\[
	\omega(k) = 1, \qquad g(k) = (\sqrt{2} \pi |k|)^{-1},
	\]
	and the Nelson model where 
	\[
	\omega(k) = \sqrt{k^2+m^2} \text{ with } m \geq 0, \quad \text{and } \quad g(k) = \bbone_{[\kappa,\Lambda]}(|k|) \omega(k)^{-1/2} \text{ with } 0 \leq \kappa < \Lambda < \infty.
	\] 
	$m$ is the mass of the Bosons, and $\kappa, \Lambda$ are the infrared and ultraviolet 
	cutoffs, respectively. They restrict the interaction of the particle with the field modes to those modes with 
	energy between $\kappa$ and $\Lambda$. 
		
	Under suitable assumptions on $\omega$ and $g$ (which are fulfilled for the two examples above), the 
	operator $H$ is self-adjoint and bounded below. Since the coupling of the 
	particle to the field is invariant under translations, $H$ commutes with the total momentum operator 
	\[
	P_{\rm tot} = p + P_{\rm f} = p + \int_{\bbR^d} k \, a^\ast(k) a(k) \, \dd k.
	\]
	Therefore, $H$ admits a fiber decomposition $H = U^\ast \int_{\bbR^d}^{\oplus} H(P) \, \dd P \,\, U$ 
	with a suitable unitary operator $U$, where for each $P \in \bbR^d$ the operator 
	\[
	H(P) = \frac12 (P - P_{\rm f})^2 +  \int_{\bbR^d} \omega(k) a^\ast(k) a(k) \, \dd k + \sqrt{\alpha} \int_{\bbR^d} 
	g(k) ( a(k) + a^\ast(k) ) \, \dd k.
	\] 
	now acts only on Fock space. $H(P)$ is bounded below and self-adjoint whenever $H$ is, and the map that takes 
	$P \in \bbR^d$ to the 
	bottom $E(P)$ of the spectrum of $H(P)$ is the energy-momentum relation for the particle. By the rotation invariance
	of $\omega$ and $g$, $E(P) = E_{\rm r}(|P|)$ only depends on $|P|$, and $E''_{\rm r}(0)$ is the inverse of the 
	effective mass of the particle interacting with the Bose field. 
	
	Polaron path measures are related to polaron models by a Feynman-Kac formula: 
	the particle Hamiltonian $\frac12 p^2$ is the generator of Brownian motion
	$\caW$, and the field Hamiltonian 
	$\int \omega(k) a^\ast(k) a(k) \, \dd k$ is unitarily equivalent to the generator of an infinite dimensional 
	Ornstein-Uhlenbeck process $\caG$. Its probability distribution is supported on distribution-valued functions 
	$s \mapsto \phi_s$, and its covariance function is given by 
	\[
	 \bbE_\caG(\phi_s(u) \phi_t(v))  = \int \hat{u}(k) \frac{1}{2 \omega(k)} \e{-|t-s| \omega(k)} \overline{ \hat v(k)} 
	\, \dd k
	\]
	for suitable test functions $u,v$.   
	Much like in the ordinary Feynman-Kac formula, this allows to write matrix elements 
	$\langle \Psi, \e{-tH} \Phi \rangle$ as integrals with respect to the measure $\caG \otimes \caW$. When 
	$\Omega$ is the Fock vacuum and $f_1,f_2 \in L^2(\bbR^d)$, this leads to the equality 
	\[
	\langle  f_1 \otimes \Omega, \e{-tH} f_2 \otimes \Omega \rangle 
	= 
	\int \dd x f_1(x) \int \mathcal W^x(\dd \mathbf x) \int \caG(\dd \phi)
	\exp \Big(- \sqrt{\alpha} \int_0^t \phi_s(\rho(\cdot - \mathbf x_s)) \, \dd s \Big) f_2(\mathbf x_t),
	\]
	where $\caW^x$ is Brownian motion started at $x$. 
	Since the exponent is linear in the field variable $\phi$, the Gaussian integral can be carried out explicitly, 
	with the result 
	\begin{equation} \label{FKF} 
	\langle  f_1 \otimes \Omega, \e{-tH} f_2 \otimes \Omega \rangle = 
	\int \dd x f_1(x) \int  W^x(\dd \mathbf x) \e{\frac{\alpha}{2} \int_0^t \dd r \int_0^t \dd s \,
	 w(r-s,\mathbf x_r - \mathbf x_s) } f_2(\mathbf x_t),
	\end{equation} 
	where 
	\[ 
	w(r-s,\mathbf x_r - \mathbf x_s) = 
	\bbE_{\caG} \big(\phi_r\big(\varrho(\cdot - \mathbf x_r)\big) \phi_s\big(\varrho(\cdot - \mathbf x_s)\big)\big) 
	=  \int |g(k)|^2 \e{\ii k \cdot (\mathbf x_r - \mathbf x_s)} \e{-\omega(k) |r-s|}.
	\]	
	This establishes the connection between the Polaron models and Polaron path measures. 
	The formal choice $f_1 = \delta(0)$ and $f_2 = 1$ in \eqref{FKF} 
	corresponds to the matrix element  $\langle \Omega, e^{-tH(0)}\Omega \rangle$, 
	see e.g.\ \cite{DySp20}, where also expressions for $\langle \Omega, e^{-tH(P)}\Omega \rangle$ are derived for arbitrary 
	$P$. 
	
	For the Fröhlich polaron in three dimensions, an explicit computation leads to 
	$w(t,x) = \frac{1}{|x|} \e{-|t|}$. For the massless Nelson model (i.e. $m=0$) in three dimensions, one obtains 
	\[
	w(t,x) = 4 \pi \int_\kappa^\Lambda \e{-r|t|} \frac{\sin(r|x|)}{|x|} \dd r,
	\]
	When $\kappa = 0$, i.e.\ when the particle is allowed to interact with low energy field modes, this $w$ decays
	like $|t|^{-2}$ for large $t$. \\[2mm]

	{\bf Acknowledgement:} We thank Herbert Spohn for useful comments on an earlier version of this paper.

\newcommand{\etalchar}[1]{$^{#1}$}


\begin{thebibliography}{BHL{\etalchar{+}}02}

\bibitem[ANT78]{Arjas1978}
Elja Arjas, Esa Nummelin, and Richard~L. Tweedie.
\newblock Uniform limit theorems for non-singular renewal and markov renewal
  processes.
\newblock {\em Journal of Applied Probability}, 15(1):112--125, March 1978.

\bibitem[AS72]{AbraSteg72}
Milton Abramowitz and Irene~A. Stegun, editors.
\newblock {\em Handbook of Mathematical Functions with Formulas, Graphs, and
  Mathematical Tables}.
\newblock U.S. Government Printing Office, Washington, DC, USA, tenth printing
  edition, 1972.

\bibitem[Asm03]{asmussen2003applied}
S{\o}ren Asmussen.
\newblock {\em Applied probability and queues}, volume~51 of {\em Applications
  of mathematics}.
\newblock Springer, New York, 2. ed. edition, 2003.

\bibitem[Bet03]{Be03}
Volker Betz.
\newblock Existence of gibbs measures relative to brownian motion.
\newblock {\em Markov Processes and Related Fields}, 9(3):85--102, 2003.

\bibitem[BHL{\etalchar{+}}02]{BHLMS02}
Volker Betz, Fumio Hiroshima, J{\'{o}}zsef L{\H{o}}rinczi, Robert~A. Minlos,
  and Herbert Spohn.
\newblock Ground state properties of the nelson hamiltonian: A gibbs-measure
  based approach.
\newblock {\em Reviews in Mathematical Physics}, 14(02):173--198, February
  2002.

\bibitem[BS04]{BeSp04}
Volker Betz and Herbert Spohn.
\newblock A central limit theorem for gibbs measures relative to brownian
  motion.
\newblock {\em Probability Theory and Related Fields}, 131(3):459--478, 2004.

\bibitem[BT17]{BlTh17}
Gonzalo~A. Bley and Lawrence~E. Thomas.
\newblock Estimates on functional integrals of quantum mechanics and
  non-relativistic quantum field theory.
\newblock {\em Communications in Mathematical Physics}, 350(1):79--103, January
  2017.

\bibitem[Dob68]{Do68}
P.~L. Dobruschin.
\newblock The description of a random field by means of conditional
  probabilities and conditions of its regularity.
\newblock {\em Theory of Probability {\&} Its Applications}, 13(2):197--224,
  January 1968.

\bibitem[Dob70]{Do70}
R.~L. Dobrushin.
\newblock Prescribing a system of random variables by conditional
  distributions.
\newblock {\em Theory of Probability {\&} Its Applications}, 15(3):458--486,
  January 1970.

\bibitem[DS20]{DySp20}
Wojciech Dybalski and Herbert Spohn.
\newblock Effective mass of the polaron{\textemdash}revisited.
\newblock {\em Annales Henri Poincar{\'{e}}}, 21(5):1573--1594, 2020.

\bibitem[DV83]{DoVa83}
M.~D. Donsker and S.~R.~S. Varadhan.
\newblock Asymptotics for the polaron.
\newblock {\em Communications on Pure and Applied Mathematics}, 36(4):505--528,
  July 1983.

\bibitem[DVJ08]{Daley2007}
D.J. Daley and David Vere-Jones.
\newblock {\em An Introduction to the Theory of Point Processes - Volume II:
  General Theory and Structure}.
\newblock Springer, Berlin, Heidelberg, 2008.

\bibitem[Fey55]{Fe55}
R.~P. Feynman.
\newblock Slow electrons in a polar crystal.
\newblock {\em Physical Review}, 97(3):660--665, February 1955.

\bibitem[GS95]{10.2307/1428137}
Fabrice Guillemin and Alain Simonian.
\newblock {Transient Characteristics of an $M/M/\infty$ System}.
\newblock {\em Advances in Applied Probability}, 27(3):862--888, 1995.

\bibitem[Gub06]{Gu06}
Massimiliano Gubinelli.
\newblock {Gibbs Measures for Self-Interacting Wiener Paths}.
\newblock {\em Markov Processes and related fields}, 12:747--766, 2006.

\bibitem[Gut13]{Gut2013}
Allan Gut.
\newblock {\em Probability - A Graduate Course}.
\newblock Springer, Berlin, Heidelberg, 2013.

\bibitem[KV86]{KiVa86}
C.~Kipnis and S.~R.~S. Varadhan.
\newblock Central limit theorem for additive functionals of reversible markov
  processes and applications to simple exclusions.
\newblock {\em Communications in Mathematical Physics}, 104(1):1--19, March
  1986.

\bibitem[LHB11]{LHB11}
J{\'{o}}zsef L\"{o}rinczi, Fumio Hiroshima, and Volker Betz.
\newblock {\em Feynman-Kac-Type Theorems and Gibbs Measures on Path Space}.
\newblock De Gruyter, August 2011.

\bibitem[LT97]{LiTh97}
Elliott~H. Lieb and Lawrence~E. Thomas.
\newblock {Exact ground state energy of the strong-coupling polaron}.
\newblock {\em Communications in Mathematical Physics}, 183(3):511 -- 519,
  1997.

\bibitem[M{\o}l06]{Mo06}
Jacob~Schach M{\o}ller.
\newblock The polaron revisited.
\newblock {\em Reviews in Mathematical Physics}, 18(05):485--517, June 2006.

\bibitem[Muk20]{Mu17}
Chiranjib Mukherjee.
\newblock {Central limit theorem for Gibbs measures on path spaces including
  long range and singular interactions and homogenization of the stochastic
  heat equation}.
\newblock {\em arXiv:1706.09345}, 2020.

\bibitem[MV19]{MV19}
Chiranjib Mukherjee and S.~R.~S. Varadhan.
\newblock Identification of the polaron measure i: Fixed coupling regime and
  the central limit theorem for large times.
\newblock {\em Communications on Pure and Applied Mathematics}, 73(2):350--383,
  August 2019.

\bibitem[OS99]{OsSp99}
Hirofumi Osada and Herbert Spohn.
\newblock Gibbs measures relative to brownian motion.
\newblock {\em The Annals of Probability}, 27(3), July 1999.

\bibitem[Pek49]{Pe49}
S.I. Pekar.
\newblock {Theory of polarons}.
\newblock {\em Zh. Eksperimen. i Teor. Fiz.}, 19, 1949.

\bibitem[R{\'{e}}n63]{Rnyi1963}
A.~R{\'{e}}nyi.
\newblock On the central limit theorem for the sum of a random number of
  independent random variables.
\newblock {\em Acta Mathematica Academiae Scientiarum Hungaricae},
  11(1-2):97--102, March 1963.

\bibitem[Roy14]{article}
Thomas Royen.
\newblock A simple proof of the gaussian correlation conjecture extended to
  some multivariate gamma distributions.
\newblock {\em Far East Journal of Theoretical Statistics}, 48:139--145, 01
  2014.

\bibitem[Spo87]{Sp87}
Herbert Spohn.
\newblock Effective mass of the polaron: A functional integral approach.
\newblock {\em Annals of Physics}, 175(2):278--318, May 1987.

\end{thebibliography}

\end{document}